\newtheorem{defi}{\indent Definition}
\newtheorem{lemma}[defi]{\indent Lemma}
\newtheorem{cor}[defi]{\indent Corollary}
\newtheorem{theo}[defi]{\indent Theorem}
\newtheorem{prop}[defi]{\indent Proposition}
\newtheorem{conj}[defi]{\indent Conjecture}
\newcommand{\N}{\mathbb{N}} 
\newcommand{\C}{\mathbb{C}} 
\newcommand{\R}{\mathbb{R}} 
\newcommand{\K}{\mathbb{K}} 
\newcommand{\ot}{\otimes} 
\newcommand{\FQSym}{\mathbf{FQSym}} 
\newcommand{\Rq}[1]{\ifthenelse{\equal{#1}{}}{\subparagraph{Remarks.}}{\subparagraph{Remark.}}} 
\newcommand{\Ex}[1]{\ifthenelse{\equal{#1}{}}{\subparagraph{Examples.}}{\subparagraph{Example.}}} 
\DeclareMathOperator{\length}{length}
\DeclareMathOperator{\Span}{Span}
\begin{document}
\title{Weak stuffle algebras}
\date{}
\author{Cécile Mammez\\
	\small{\it Univ. Lille, UMR 8524 - Laboratoire Paul Painlevé,  F-59000 Lille, France}\\
	\small{\it CNRS, UMR 8524,  F-59000 Lille, France} \\ 
	\small{e-mail : cecile.mammez@univ-lille.fr}
}

\maketitle

\textbf{Abstract.} Motivated by $q$-shuffle products determined by Singer from $q$-analogues of multiple zeta values, we build in this article a generalisation of the shuffle and stuffle products in terms of weak shuffle and stuffle products. Then, we characterise weak shuffle products and give as examples the case of an alphabet of cardinality two or three. We focus on a comparison between algebraic structures respected in the classical case and in the weak case. As in the classical case, each weak shuffle product can be equipped with a dendriform structure. However, they have another behaviour towards the quadri-algebra and the Hopf algebra structure. We give some relations satisfied by weak stuffle products.  \\

\textbf{Keywords.} Shuffle algebras, stuffle algebras, dendriform algebras, quadri-algebras, Hopf algebras; \\

\textbf{Résumé.} A partir de $q$-analogues aux fonctions zêta multiples, Singer détermine  des $q$-battages. Ceci motive, dans cet article, la construction d'une généralisation des produits de battage et de battage contractant en produits de battage faibles et produits de battage contractant faibles. Nous caractérisons  ensuite les battages faibles et donnons comme exemple le cas d'un alphabet à deux ou trois lettres. Nous comparons les structures algébriques respectées dans le cas classique et dans le cas faible. Comme dans le cas classique, tout battage faible peut être muni d'une structure d'algèbre dendriforme. En revanche, ils se comportent différemment face à la tructure de quadri-algèbre et d'algèbre de Hopf.  Nous donnons des relations vérifiées par les battages contractants faibles.  \\

\textbf{Mots-clés.} Algèbres de battage, algèbres de battage contractant, algèbres dendriformes, quadri-algèbres, algèbres de Hopf. \\

\textbf{AMS classification.} 05A05, 05E40, 16T30, 68R15

\allowdisplaybreaks
\section*{Introduction}  
The notion of shuffle and stuffle algebras is widely used in several fields of mathematics. Indeed, they participate in the study of Rota-Baxter algebras with the notion of mixable shuffle algebras \cite{Guo2000,Ebrahimi-Fard2006,Jian2017}, in the study of Yang-Baxter algebras \cite{Jian2010}, in the study of quasi-symmetric functions and words algebras \cite{Gessel1984,Malvenuto1994,Malvenuto1995,Duchamp2002,Duchamp2011,Foissy2016,Vargas2014,Mammez}, in the study of multiple zeta values \cite{Zagier1994,Hoffman1997,Hoffman2000,Hoffman2003a,Hoffman2017,Hoffman2018,Singer2016,Ebrahimi-Fard2016,Ebrahimi-Fard2016a} \dots 

The classical stuffle product comes from the product of classical multiple zeta values and is defined by the relation 
\[au\Box bv=a(u\Box bv)+b(au\Box v)+ (a\diamond b)(u\Box v) \]
where $a$ and $b$ are letters, $u$ and $v$ are words and $\diamond$ is an associative and commutative product which is equal to $0$ in the case of the classical shuffle product. Thus, the shuffle part of the relation is symmetric and does not depend on letters of any words in the product. 
In his work, Singer focuses on $q$-shuffle products coming from $q$-analogues of multiples zeta values. This case enables the existance of some letters $p$ and $y$ satisfying a relation in the form of 
\[yu\Box pv=pv\Box yu=y(u\Box pv) \] for any words $u$ and $v$. 
This new $q$-shuffle relation is not symmetric and depends on the beginning of each word in the product. This leads to focus on new generalisations of shuffle and stuffle products \cite{Singer2016a,Ebrahimi-Fard2016a,Ebrahimi-Fard2016}. 

In this article, we present a new generalisation of shuffle and stuffle algebras, we study their algebraic structures and compare them to the classical case. The article is organised as follows.
\begin{itemize}
	\item In Section $1$, we recall the classical notion of shuffle and stuffle product thanks to the multiple zeta values as well as the calculation by Singer of $q$-shuffle associated to the Schlesinger-Zudilin  model and the Bradley-Zhao model.
	\item In Section $2$, we define a generalisation of the classical shuffle product and the classical stuffle product called weak shuffle products and weak stuffle products and prove a characterisation of weak shuffle products. We detail the case of an alphabet of cardinality $2$ or $3$.
	\item In Section $3$, we focus on algebraic structures respected by the classical shuffle product and we determine if the weak shuffle products respect them too. Thus we prove that weak shuffle products are dendriform but there are obstacles to the quadri-algebra structure.
	\item In Section $4$, we express some relations satistied by weak stuffle products and we express the $q$-shuffle products given by Singer in terms of weak stuffle product. Besides, in the case of an infinite, countable and totally ordered alphabet $\{x_1,\dots,x_n,\dots\}$, we prove that, if the contracting part in the weak stuffle products is expressed as $f_3(x_i\ot x_j)\in\K^{*}x_{i+j}$, then the shuffle part is the null product or the classical shuffle product.  We give some informations more about weak stuffle products in the case of an alphabet of cardinality $2$ or $3$. 
	\item In Section $5$, we prove that a weak stuffle product is compatible with the deconcatenation coproduct if and only if the underlying weak shuffle product is the classical shuffle product and the contracting part is associative and commutative.
	\item Computation programs used to prove Lemma \ref{lemmaprogramme} are detailed in Section $6$.
\end{itemize}

\textbf{Acknowledgment:} I would like to thank the anonymous referees for their useful comments and suggestions. I would like to thank all peaple who supported me for this work. This work was funding by the Laboratoire Paul Painlevé at Université de Lille, the Fédération de Recherche Mathématique des Hauts-de-France, the ANR Alcohol project ANR-19-CE40-0006 and  the Labex CEMPI ANR-11-LABX-0007-01.
 
\section{Reminders}
\subsection{Classical shuffle and stuffle algebras}
We recall here the definition of the stuffle product in the context of the multiple zeta values.

\begin{defi}
	Let $s$ be an integer and let $(k_1,\dots,k_s)$ be an $s$-tuple in $\N_{\geq 2}\times\N^{s-1}$. The multiple zeta value associated to $(k_1,\dots,k_s)$
	is \[\displaystyle\zeta(k_1,\dots,k_s)=\sum_{\substack{(m_1,\dots,m_s)\in\N\\ m_1>\dots>m_s>0}}\frac{1}{m_1^{k_1}\dots m_s^{k_s}}.\]
\end{defi}
On multiple zeta values, we consider the product of functions taking values in $\C$. For instance,
\begin{align*}
	\zeta(n)\zeta(m)=&\zeta(m,n)+\zeta(n,m)+\zeta(m+n),\\
	\zeta(n,p)\zeta(m)=&\zeta(m,n,p)+\zeta(n,m,p)+\zeta(n,p,m)+\zeta(n+m,p)+\zeta(n,p+m).
\end{align*}
Then, it leads to the following algebraic definition and following theorem \cite{Hoffman1997}.
\begin{theo}
	Let $X=\{x_1,\dots, x_n,\dots\}$ be a countable alphabet. Let $\K\langle X\rangle$ be the algebra of words on the alphabet $X$.  We define the product $\star$, called the stuffle product, by:
	\begin{align*}
	 	u\star 1=&1\star u=1,\\
		u\star 0=&0\star u=0,\\
		x_iu\star x_jv=&x_i(u\star x_jv)+x_j(x_iu\star v)+x_{i+j}(u\star v)
	\end{align*}
for any letters $x_i$ and $x_j$ and any words $u$ and $v$.

Then 
\begin{align*}
x_iux_k\star x_jvx_l=&x_i(ux_k\star x_jvx_l)+x_j(x_iux_k\star vx_l)+x_{i+j}(ux_k\star vx_l)\\
=&(x_iu\star x_jvx_l)x_k+(x_iux_k\star x_jv)x_l+(x_{i}u\star x_jv)x_{k+l}
\end{align*}
and $(\K\langle X\rangle,\star)$ is an associative and commutative algebra.
\end{theo}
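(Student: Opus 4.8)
The plan is to treat the first equality as a triviality and to obtain the second equality, commutativity and associativity by a single technique: induction on the total length of the words involved, with the defining (left) recursion $x_iu\star x_jv=x_i(u\star x_jv)+x_j(x_iu\star v)+x_{i+j}(u\star v)$ as the only tool. First I would note that this recursion does define $\star$ unambiguously, since every product on its right-hand side involves words of strictly smaller total length, so all the inductions below are legitimate. The first displayed equality is then immediate: it is just the defining relation applied to the two words $x_i(ux_k)$ and $x_j(vx_l)$, whose initial letters are $x_i$ and $x_j$ and whose tails are $ux_k$ and $vx_l$.

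For the second (``right'') equality I would argue by induction on the total length, writing the two factors as $a=a'x_k$ and $b=b'x_l$ so that the claim becomes
\[ a\star b=(a'\star b)\,x_k+(a\star b')\,x_l+(a'\star b')\,x_{k+l}. \]
Conceptually this is the mirror image of the defining relation, and indeed it would follow at once from the fact that $\star$ commutes with word reversal; but proving reversal-invariance needs exactly the same induction, so I would establish the identity directly. Assume first that $a'$ and $b'$ are both nonempty, $a'=x_ia''$ and $b'=x_jb''$. Applying the left recursion to $a\star b$ splits off $x_i$ and $x_j$ and yields three products of strictly smaller length; applying the inductive hypothesis (the right recursion) to each of them expands each through its own last letter, producing nine terms. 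Collecting these nine terms according to whether they end in $x_k$, in $x_l$, or in $x_{k+l}$ gives three groups, and the left recursion applied to $a'\star b$, to $a\star b'$ and to $a'\star b'$ identifies each group with one of the three desired summands. The remaining base cases, where $a'$ or $b'$ is empty so that one factor is a single letter, I would verify by a shorter induction on the length of the other factor, the initial case $a=x_k$, $b=x_l$ being the defining relation itself (the two terms of the resulting sum commuting).

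Commutativity I would dispatch by an easy induction on $|u|+|v|$: expanding $x_iu\star x_jv$ and $x_jv\star x_iu$ by the defining relation and using the inductive hypothesis on the three shorter products, together with $x_{i+j}=x_{j+i}$, makes the two expansions agree term by term.

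Associativity is where the real work lies, and I expect it to be the main obstacle. Here I would again induct on the total length and expand both $(a\star b)\star c$ and $a\star(b\star c)$ with the defining relation, for $a=x_ia'$, $b=x_jb'$, $c=x_kc'$. Each side unfolds into terms prefixed by the single letters $x_i,x_j,x_k$ and by the contracted letters $x_{i+j},x_{i+k},x_{j+k},x_{i+j+k}$; after using the inductive hypothesis to reassociate the shorter triple products, the task is to check that the two collections of terms coincide. The delicate part is the matching of the contraction terms---above all the fully contracted term $x_{i+j+k}(a'\star b'\star c')$, which is produced along several different routes on each side---so I would organise the computation by leading letter and show that both sides reduce to one common, manifestly symmetric ``triple stuffle'' expansion; commutativity, already in hand, then cuts the number of cases to be inspected roughly in half.
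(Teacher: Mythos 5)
Your proof is correct, but note that the paper does not actually prove this theorem: it is recalled as a classical result and attributed to Hoffman \cite{Hoffman1997}, so there is no in-paper argument to compare yours against. Your strategy --- induction on total word length using only the defining left recursion; first deriving the mirrored right recursion by expanding through the leading letters, applying the inductive hypothesis to the three shorter products, and regrouping the resulting nine terms according to their final letter ($x_k$, $x_l$ or $x_{k+l}$); then commutativity; then associativity by matching the two expansions of a triple product --- is exactly the standard Hoffman-style argument, and it is also the same length-induction technique the paper itself deploys when it proves the analogous commutativity and associativity statements for weak shuffle products in Theorem \ref{carctwshuffle}. Two small points deserve care if you write this out in full. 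First, your induction for the right recursion needs the identity in the slightly more general form $a\star b=(a^{-}\star b)x_k+(a\star b^{-})x_l+(a^{-}\star b^{-})x_{k+l}$ where $a^{-}$ and $b^{-}$ are allowed to be empty (with $1\star u=u$, correcting the obvious typo $1\star u=1$ in the statement), since the base cases with a one-letter factor do arise inside the induction; you acknowledge this, and the base cases do check out. Second, the associativity step is only a plan in your write-up; the plan is right, but the term-matching should be carried out explicitly, in particular the pairing of the contracted prefixes $x_{i+j}$, $x_{j+k}$, $x_{i+k}$ and the fully contracted $x_{i+j+k}$ on the two sides, which is precisely where the associativity and commutativity of the index operation $(i,j)\mapsto i+j$ enters.
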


It is possible to define another algebra:
\begin{theo}
	Let $X=\{x_1,\dots, x_n,\dots\}$ be a countable alphabet. Let $\K\langle X\rangle$ be the algebra of words on the alphabet $X$.  We define the product $\shuffle$, called the shuffle product, by:
	\begin{align*}
	u\shuffle 1=&1\shuffle u=1,\\
	u\shuffle 0=&0\shuffle u=0,\\
	x_iu\shuffle x_jv=&x_i(u\shuffle x_jv)+x_j(x_iu\shuffle v)
	\end{align*}
	for any letters $x_i$ and $x_j$ and any words $u$ and $v$.
	
	Then 
	\begin{align*}
	x_iux_k\shuffle x_jvx_l=&x_i(ux_k\shuffle x_jvx_l)+x_j(x_iux_k\shuffle vx_l)\\
	=&(x_iu\shuffle x_jvx_l)x_k+(x_iux_k\shuffle x_jv)x_l
	\end{align*}
	and $(\K\langle X\rangle,\shuffle)$ is an associative and commutative algebra.
\end{theo}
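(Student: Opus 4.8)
The first displayed equality is simply the defining (left) recursion read on the words $x_iux_k$ and $x_jvx_l$, so nothing has to be proved there; the substantive assertions are the second equality (the symmetric recursion that peels off the \emph{last} letters), together with commutativity and associativity of $\shuffle$. The plan is to treat all three by induction on the total length of the words involved, using that the base rules at $1$ and $0$ together with the left recursion determine $\shuffle$ uniquely on any pair of words of given length. Conceptually everything becomes transparent once one observes that $u\shuffle v$ equals the sum $\sum_w w$ over all interleavings (shuffles) of $u$ and $v$; I will point to this combinatorial picture for intuition but carry out self-contained inductions.

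Commutativity is the easiest step. Writing $u=x_iu'$ and $v=x_jv'$, the recursion gives $u\shuffle v=x_i(u'\shuffle v)+x_j(u\shuffle v')$ and $v\shuffle u=x_j(v'\shuffle u)+x_i(v\shuffle u')$; invoking the induction hypotheses $u'\shuffle v=v\shuffle u'$ and $u\shuffle v'=v'\shuffle u$ matches the two right-hand sides term by term, the base cases being the unit and annihilation rules at $1$ and $0$.

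For the second displayed equality I would prove the general right recursion
\[ Pc\shuffle Qd=(P\shuffle Qd)\,c+(Pc\shuffle Q)\,d \]
for all words $P,Q$ and letters $c,d$, by induction on $|P|+|Q|$. When $P=x_iP'$ and $Q=x_jQ'$ I expand the left-hand side once by the left recursion, apply the induction hypothesis to the two shorter shuffles $P'c\shuffle Qd$ and $Pc\shuffle Q'd$, and then regroup the result according to its trailing letter: the coefficient of $c$ reassembles, via the left recursion read backwards, into $P\shuffle Qd$, and the coefficient of $d$ into $Pc\shuffle Q$. The cases where $P$ or $Q$ is empty follow directly from the unit rule (for instance $c\shuffle d=cd+dc$). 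Specialising to $P=x_iu$, $c=x_k$, $Q=x_jv$, $d=x_l$ yields exactly the stated identity.

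Associativity $(u\shuffle v)\shuffle w=u\shuffle(v\shuffle w)$ is likewise proved by induction on $|u|+|v|+|w|$, with $u=x_iu'$, $v=x_jv'$, $w=x_kw'$. Expanding the outer and then the inner product by the left recursion and applying the induction hypothesis to the three shorter triple shuffles, both sides collapse to the manifestly symmetric expression
\[ x_i\bigl(u'\shuffle(v\shuffle w)\bigr)+x_j\bigl(u\shuffle(v'\shuffle w)\bigr)+x_k\bigl(u\shuffle(v\shuffle w')\bigr), \]
whence equality. I do not expect a genuine obstacle anywhere; the only real care is bookkeeping — pairing each use of the recursion with the correct instance of the induction hypothesis and regrouping terms by their leading (or trailing) letter. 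The combinatorial description confirms this: associativity is the statement that both bracketings enumerate the interleavings of the three words, while the right recursion is just the left recursion applied to reversed words, using $\widetilde{u\shuffle v}=\tilde u\shuffle\tilde v$.
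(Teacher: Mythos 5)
Your proposal is correct. Note that the paper itself offers no proof of this statement --- it is recalled as classical background --- so there is nothing to match against line by line; but your strategy is exactly the one the paper deploys later, in the converse direction of the proof of Theorem~\ref{carctwshuffle}, where commutativity and associativity of a general weak shuffle are established by induction on $\length(u)+\length(v)$ (resp.\ $\length(w_1)+\length(w_2)+\length(w_3)$) after peeling off the leading letters; your argument is the specialisation $f_1\equiv f_2\equiv 1$, $f_3\equiv 0$. The one genuinely additional ingredient you supply is the right recursion $Pc\shuffle Qd=(P\shuffle Qd)c+(Pc\shuffle Q)d$, which the paper does not prove either but simply imports from Aguiar--Loday in the proof of Theorem~\ref{Quadri}; your induction for it (expand by the left recursion, apply the hypothesis to $P'c\shuffle Qd$ and $Pc\shuffle Q'd$, regroup by trailing letter) is sound, as is the alternative you sketch via the reversal automorphism $\widetilde{u\shuffle v}=\tilde u\shuffle\tilde v$. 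Two cosmetic points: the rule $u\shuffle 1=1\shuffle u=1$ in the statement is a typo for $=u$ (compare the definition of a weak stuffle product), and your implicit use of the unit rule in that corrected form is what makes the base cases work; and the common value you display for the two bracketings in the associativity step is not literally ``manifestly symmetric'' in $u,v,w$, but it is indeed the expression both sides reduce to after one application of the recursion and the induction hypothesis.
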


\begin{theo}
	Let $X=\{x_1,\dots, x_n,\dots\}$ be a countable alphabet. The algebras $(\K\langle X\rangle,\star)$ and $(\K\langle X\rangle,\shuffle)$ are isomorphic.
\end{theo}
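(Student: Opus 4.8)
The plan is to prove the isomorphism constructively, by writing down Hoffman's exponential map explicitly and verifying that it is a length-filtered algebra isomorphism from the shuffle algebra to the stuffle algebra; throughout I assume, as is standard in this context, that $\mathrm{char}\,\K=0$, so that the factorials appearing below are invertible. First I would grade $\K\langle X\rangle$ by the \emph{weight} $\|x_{a_1}\cdots x_{a_n}\|=a_1+\dots+a_n$. Both products are homogeneous for this grading: the contraction term $x_{i+j}$ in $\star$ preserves the weight $i+j$ while lowering the length by one, and each weight-homogeneous component is finite dimensional, there being exactly $2^{N-1}$ words of weight $N$ (the compositions of $N$). I then define $\Phi\colon\K\langle X\rangle\to\K\langle X\rangle$ by $\Phi(1)=1$ and, for a nonempty word $w=x_{a_1}\cdots x_{a_n}$,
\[
\Phi(w)=\sum_{k=1}^{n}\ \sum_{\substack{j_1+\dots+j_k=n\\ j_1,\dots,j_k\ge 1}}\frac{1}{j_1!\cdots j_k!}\; x_{c_1}x_{c_2}\cdots x_{c_k},
\]
where $c_1=a_1+\dots+a_{j_1}$, $c_2=a_{j_1+1}+\dots+a_{j_1+j_2}$, and so on: one cuts $w$ into $k$ consecutive blocks of sizes $j_1,\dots,j_k$ and replaces each block by the single letter whose index is the sum of the indices in that block. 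The goal is to show $\Phi$ is a linear bijection satisfying $\Phi(u\shuffle v)=\Phi(u)\star\Phi(v)$.

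Bijectivity is the easy half. The only composition of $n$ into $n$ parts is $(1,\dots,1)$, contributing $w$ itself with coefficient $1$, and every other term of $\Phi(w)$ is a word of the same weight but strictly smaller length. Hence $\Phi(w)=w+(\text{a combination of strictly shorter words of the same weight})$. Restricting to a finite-dimensional weight-homogeneous component and ordering a basis of words by increasing length, $\Phi$ is represented by a unitriangular matrix, hence is invertible; so $\Phi$ is a linear bijection (its inverse is Hoffman's companion logarithm map, but one does not need its explicit form).

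The morphism property $\Phi(u\shuffle v)=\Phi(u)\star\Phi(v)$ is the crux, and I would prove it by induction on $|u|+|v|$. The base cases involving the empty word are immediate since $\Phi(1)=1$ is the unit for both products, and the one-letter case $\Phi(x_i\shuffle x_j)=x_ix_j+x_jx_i+x_{i+j}=x_i\star x_j=\Phi(x_i)\star\Phi(x_j)$ is a direct check. For the inductive step one peels the leading letters via $x_iu\shuffle x_jv=x_i(u\shuffle x_jv)+x_j(x_iu\shuffle v)$ together with the defining recursion of $\star$, and compares with a recursion for $\Phi$ obtained by separating the contribution of the first block. The main obstacle is precisely this: when the leading block of $\Phi(x_iu\shuffle x_jv)$ has size $\ge 2$ it merges $x_i$ with $x_j$ into $x_{i+j}$, and these terms must reproduce exactly the contraction contribution $x_{i+j}(u\star v)$ on the stuffle side — but the naive identification with $\Phi(x_{i+j}\cdots)$ fails because a block of size $m$ carries weight $1/m!$ whereas the corresponding block of the shortened word carries $1/(m-1)!$. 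Reconciling these factorial coefficients is the delicate point. I would surmount it in one of two ways: either follow Hoffman--Ihara and realise $\Phi$ as the exponential of the locally nilpotent "merging" operator, deducing the morphism property from the corresponding infinitesimal statement and thereby bypassing the term-by-term bookkeeping; or else abandon the explicit map altogether and argue structurally — both $(\K\langle X\rangle,\shuffle,\De)$ and $(\K\langle X\rangle,\star,\De)$ are connected graded commutative Hopf algebras for the deconcatenation coproduct $\De$, so by the Leray structure theorem in characteristic zero each is a polynomial algebra on its space of indecomposables $H^{+}/(H^{+})^{2}$, and since the two share the same underlying graded space they have the same Hilbert series, forcing isomorphic indecomposable spaces in each degree and hence an algebra isomorphism. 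The explicit map is preferable for the constructive content, but the structural argument is the cleaner way to evade the coefficient difficulty.
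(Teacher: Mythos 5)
The paper does not actually prove this theorem: it simply cites Hoffman \cite[Theorem 2.5]{Hoffman2000}, and the map $\Phi$ you write down is precisely Hoffman's $\exp$ (with the coefficient $1/(j_1!\cdots j_k!)$ attached to the composition $(j_1,\dots,j_k)$ of the length). So your primary route coincides with the one the paper points to, but you go further than the paper by attempting the verification. Your bijectivity half is complete and correct: grading by weight is the right move (it makes the contraction term homogeneous and the components finite dimensional, with $2^{N-1}$ words of weight $N$), and $\Phi(w)=w+(\text{shorter words of the same weight})$ gives unitriangularity. Your diagnosis of the obstacle in the morphism half is also accurate -- the naive induction on the recursive definitions does not close because of the mismatched factorials on merged blocks, which is exactly why Hoffman's own argument proceeds through a combinatorial lemma on compositions rather than through the recursion. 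As written, the explicit route is therefore left with a genuine gap at the crucial step.

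Your structural fallback, however, does constitute a complete proof of the statement as stated, and it is genuinely different from the paper's (cited) argument. Both $(\K\langle X\rangle,\shuffle,\Delta)$ and $(\K\langle X\rangle,\star,\Delta)$ are connected graded (by weight) commutative Hopf algebras with finite-dimensional components over a field of characteristic $0$; Leray's theorem makes each a free commutative algebra on its indecomposables, and the Hilbert series of a free commutative algebra determines that of its generating space degree by degree, so equal Hilbert series force isomorphic graded generating spaces and hence a graded algebra isomorphism. Since the theorem only claims an algebra isomorphism, this suffices. What you lose is constructiveness (no explicit map, and no claim of a Hopf-algebra isomorphism, which Hoffman's $\exp$ does provide); what you gain is a clean bypass of the factorial bookkeeping. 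One caveat worth flagging: this route presupposes that $\star$ and deconcatenation form a Hopf algebra, a fact the paper only records later (Theorem \ref{theoHopfalgebra}, itself taken from Hoffman), so you should either prove that compatibility or cite it explicitly; it is not circular, but it is an external input of comparable weight to the theorem you are proving.
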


\begin{proof}
	This theorem was proved by Hoffman \cite[Theorem 2.5]{Hoffman2000} by describing an explicit isomorphism $\exp$. Another construction of $\exp$ leading to the proof of this theorem is given in \cite[Proposition 41]{Mammez}.
\end{proof}

\subsection{$q$-shuffle products for the Schlesinger-Zudilin  model and the Bradley-Zhao model.}
Let $q$ be real number such that $0<q<1$. A $q$-analogue of a positive integer $m$ is defined by 
\[[m]_q=\frac{1	-q^m}{1-q}=1+q+\dots+q^{m-1}.\]
The Schlesinger-Zudilin model \cite{Schlesinger2001,Zudilin2003} is defined as the following $q$-sum:
\begin{align*}
	\zeta_{q}^{SZ}(k_1,\dots,k_n)=&(1-q)^{-(k_1+\dots +k_n)}\sum_{\substack{(m_1,\dots,m_s)\in\N\\ m_1>\dots>m_s>0}}\frac{q^{m_1k_1+\dots+m_nk_n}}{[m_1]_q^{k_1}\dots [m_n]_q^{k_n}}\\=&
	\sum_{\substack{(m_1,\dots,m_s)\in\N\\ m_1>\dots>m_s>0}}\frac{q^{m_1k_1+\dots+m_nk_n}}{(1-q^{m_1})^{k_1}\dots (1-q^{m_n})^{k_n}}
\end{align*}
for any $(k_1,\dots,k_n)\in(\N^*)^n$.

The Bradley-Zhao model \cite{Bradley2005,Zhao2007} is defined as the following $q$-sum:
\begin{align*}
\zeta_{q}^{BZ}(k_1,\dots,k_n)=&(1-q)^{-(k_1+\dots +k_n)}\sum_{\substack{(m_1,\dots,m_s)\in\N\\ m_1>\dots>m_s>0}}\frac{q^{m_1(k_1-1)+\dots+m_n(k_n-1)}}{[m_1]_q^{k_1}\dots [m_n]_q^{k_n}}\\=&
\sum_{\substack{(m_1,\dots,m_s)\in\N\\ m_1>\dots>m_s>0}}\frac{q^{m_1(k_1-1)+\dots+m_n(k_n-1)}}{(1-q^{m_1})^{k_1}\dots (1-q^{m_n})^{k_n}}
\end{align*}
for any $(k_1,\dots,k_n)\in\N^n$ with $k_1\geq 2$.

From those two models, Singer defined two $q$-shuffle products corresponding to the algebraic version of the Schlesinger-Zudilin model and the Bradley-Zhao model and proved the following two theorems in \cite{Singer2015,Singer2016,Singer2016a}:

\begin{theo}[Singer]
	Let $X=\{y,p\}$ be an alphabet. The $q$-shuffle product associated to the Schlesinger-Zudilin model is given by:
	for any words $u$ and $v$,
	\begin{enumerate}
		\item $1 \shuffle_{SZ} u=u\shuffle_{SZ}1=u$,
		\item $yu\shuffle_{SZ}v=v\shuffle_{SZ}yu=y(u\shuffle_{SZ}v)$,
		\item $pu\shuffle_{SZ}pv=p(u\shuffle_{SZ}pv)+p(pu\shuffle_{SZ}v)+p(u\shuffle_{SZ}v)$.
	\end{enumerate}
	Besides, it is an associative and commutative product.
\end{theo}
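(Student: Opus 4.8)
The plan is to read the three displayed relations as a recursive definition of $\shuffle_{SZ}$, extended bilinearly in both arguments, and to prove the theorem in two logically separate parts: that this product is the one induced by the Schlesinger--Zudilin model, and that it is associative and commutative. First I would check that the recursion is well posed: rules $(2)$ and $(3)$ rewrite $u\shuffle_{SZ}v$ as a combination of products $u'\shuffle_{SZ}v'$ with $|u'|+|v'|<|u|+|v|$, so by induction on $|u|+|v|$, with base case supplied by $(1)$, the product is defined on all of $\K\langle X\rangle$.

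To identify this product with the one coming from the model, I would use a Rota--Baxter realisation. The aim is to exhibit an algebra morphism $\Phi\colon(\K\langle X\rangle,\shuffle_{SZ})\to(\mathcal{A},\cdot)$ into a commutative algebra of $q$-series computing $\zeta_q^{SZ}$, in which the letter $p$ is realised by a Rota--Baxter operator $P$ of weight $\theta=1$, satisfying $P(f)P(g)=P(fP(g))+P(P(f)g)+P(fg)$, while the letter $y$ is realised by multiplication by a fixed element. Writing $\Phi(pu)=P(\Phi(u))$ and $\Phi(yu)=\gamma\,\Phi(u)$, rule $(3)$ becomes exactly the weight-one Rota--Baxter identity applied to $\Phi(u)$ and $\Phi(v)$, and rule $(2)$ becomes the multiplier identity $\gamma\,\Phi(u)\,\Phi(v)=\gamma\,\Phi(u\shuffle_{SZ}v)$. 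Verifying that the iterated-operator expression indeed reproduces the $q$-sum defining $\zeta_q^{SZ}$, and that $P$ is genuinely Rota--Baxter of weight one, is the step I would defer to the explicit operators of Singer. I would deliberately \emph{not} try to transport associativity and commutativity through $\Phi$, since that would require injectivity of $\Phi$, i.e. linear independence of the relevant $q$-multiple zeta values, which is delicate; instead I prove these structural properties directly from $(1)$--$(3)$.

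Commutativity I would prove by induction on $|u|+|v|$. The case where either word is empty is rule $(1)$. If one word begins with $y$, rule $(2)$ is already stated in the symmetric form $yu\shuffle_{SZ}v=v\shuffle_{SZ}yu$, so there is nothing to do. The remaining case is $u=pu_0$, $v=pv_0$, where expanding both $u\shuffle_{SZ}v$ and $v\shuffle_{SZ}u$ by rule $(3)$ yields three terms each, matched pairwise by the induction hypothesis applied to the strictly shorter products $u_0\shuffle_{SZ}pv_0$, $pu_0\shuffle_{SZ}v_0$ and $u_0\shuffle_{SZ}v_0$. For associativity I would again induct on $|u|+|v|+|w|$, splitting on the leading letters. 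Every case in which one of the three words begins with $y$ collapses: repeated use of rule $(2)$ pulls the leading $y$ outside, the remaining inner associativity is handled by the induction hypothesis, and rule $(2)$ reassembles the result; this works regardless of the other two leading letters, the key point being that $y$ dominates.

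The only genuinely hard case is $u=pu_0$, $v=pv_0$, $w=pw_0$. Here I would expand $(u\shuffle_{SZ}v)\shuffle_{SZ}w$ and $u\shuffle_{SZ}(v\shuffle_{SZ}w)$ by applying rule $(3)$ twice; all the inner shuffle products that appear have strictly smaller total length and are therefore associative and commutative by induction, so the two sides differ only by a top-level rearrangement of the nine-or-so terms produced by the double application of $(3)$. That rearrangement is a formal identity in the leading $p$'s, independent of the tails $u_0,v_0,w_0$, and it coincides with the associativity of the weight-one quasi-shuffle recursion for a single contracting letter $p$ (with $p\diamond p=p$). This combinatorial bookkeeping is the main obstacle: it is the same computation that underlies the classical associativity of weight-one quasi-shuffles, but it must be carried out by hand here because the interaction with the dominance rule $(2)$ places $\shuffle_{SZ}$ outside the standard quasi-shuffle framework, so Hoffman's theorem cannot simply be invoked.
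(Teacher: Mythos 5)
The paper offers no proof of this statement: it is recalled as a theorem of Singer and justified solely by the citations \cite{Singer2015,Singer2016,Singer2016a}, so there is no internal argument to measure yours against. Judged on its own, the algebraic half of your proposal is essentially right. The induction on total length is the correct mechanism; the cases where some word begins with $y$ do collapse as you say, because after applying rule $(2)$ every resulting word still begins with $y$ and the inner products are shorter; and the all-$p$ case does reduce to the weight-one quasi-shuffle bookkeeping, the key point being that after one expansion of $pu_0\shuffle_{SZ}pv_0$ each summand again begins with $p$, so the recombination $p(u_0\shuffle_{SZ}pv_0)+p(pu_0\shuffle_{SZ}v_0)+p(u_0\shuffle_{SZ}v_0)=pu_0\shuffle_{SZ}pv_0$ lets the nine terms on each side be matched using only the inductive associativity of strictly shorter products. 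You are also right that Hoffman's theorem cannot be invoked verbatim: in the language of this paper the product has $f_1(p\ot y)=0$ and $f_2(y\ot y)=0$, so it is a weak stuffle product but not a classical quasi-shuffle.

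Two points need more care. First, when both words begin with $y$, rule $(2)$ is applicable in two distinct ways ($yu\shuffle_{SZ}yv=y(u\shuffle_{SZ}yv)$, or, via the stated symmetry, $y(v\shuffle_{SZ}yu)$), so well-definedness does not follow from length decrease alone; it must be proved jointly with commutativity inside the same induction. Your outline absorbs this silently and should make it explicit. Second, the identification with the Schlesinger--Zudilin model --- that $u\shuffle_{SZ}v$ really computes $\zeta_q^{SZ}(u)\,\zeta_q^{SZ}(v)$ --- is genuinely deferred to Singer's explicit Rota--Baxter operators rather than proved. That is the same move the paper itself makes, so it is acceptable here, but it means your argument, like the paper's, establishes the structural claims (well-definedness, commutativity, associativity) and outsources the claim that this is \emph{the} product associated to the model.
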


\begin{theo}[Singer]
	Let $X=\{y,p,\overline{p}\}$ be an alphabet. The $q$-shuffle product associated to the Bradley-Zhao model is given by:
	for any words $u$ and $v$,
	\begin{enumerate}
		\item $1 \shuffle_{BZ} u=u\shuffle_{BZ}1=u$,
		\item $yu\shuffle_{BZ}v=v\shuffle_{BZ}yu=y(u\shuffle_{BZ}v)$,
		\item $au\shuffle_{BZ}bv=a(u\shuffle_{BZ}bv)+b(au\shuffle_{BZ}v)+[a,b]a(u\shuffle_{BZ}v)$ where \[a,b\in\{p,\overline{p}\},~ [p,p]=-[\overline{p},\overline{p}]=1 \text{ and } [p,\overline{p}]=[\overline{p},p]=0.\]
	\end{enumerate}
	Besides, it is an associative and commutative product.
\end{theo}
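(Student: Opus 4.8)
The plan is to realise words in $X=\{y,p,\overline p\}$ as concrete $q$-series and to deduce everything from the fact that pointwise multiplication of such series is commutative and associative. Concretely, I would build a linear map $\Phi$ from $\K\langle X\rangle$ to the algebra $\mathcal A$ of functions of $q$ (formal power series in $q$, or holomorphic functions on $0<q<1$) sending a word to the corresponding Bradley--Zhao iterated $q$-sum: each letter contributes one summation index together with an elementary building block of the shape $g_k(m)=q^{m(k-1)}/(1-q^m)^{k}$, and the word prescribes the strict nesting $m_1>\dots>m_n>0$ of the indices. Since $\mathcal A$ is commutative and associative, the product $u\shuffle_{BZ}v:=\Phi^{-1}(\Phi(u)\Phi(v))$ — once it is known to land back in the image of $\Phi$ — is automatically commutative and associative. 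Thus the two structural assertions of the theorem come for free, provided I first secure the two supporting facts below.

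The first supporting fact is that $\Phi$ is injective, i.e. that the representing $q$-series attached to distinct words are linearly independent; this is what lets the product of functions be transported back to a well-defined product of words. I would prove it by a valuation/filtration argument on the leading $q$-exponents (or order of vanishing as $q\to 1$) of the series, exactly as one proves the analogous independence statement for classical multiple zeta series.

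The second and central supporting fact is the recursion. Here I would expand $\Phi(au)\Phi(bv)$ as a product of two nested sums and partition the combined index set according to the comparison of the two leading indices $m_1$ (from the left factor) and $m_1'$ (from the right factor). The region $m_1>m_1'$ reassembles, after factoring out the leading block of $a$, into $\Phi(a(u\shuffle_{BZ}bv))$; symmetrically $m_1<m_1'$ gives $\Phi(b(au\shuffle_{BZ}v))$; and the diagonal $m_1=m_1'=:m$ forces me to multiply the two leading building blocks at the same index. This is where the arithmetic of the model enters: a short $q$-identity rewrites the product of the two leading blocks, expressed in the $\{p,\overline p\}$-basis, as $[a,b]$ times a single leading block of the same type, which reassembles into $\Phi([a,b]\,a(u\shuffle_{BZ}v))$. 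Computing this local product and reading off its coordinates is exactly what produces the table $[p,p]=1,\ [\overline p,\overline p]=-1,\ [p,\overline p]=[\overline p,p]=0$. The special letter $y$ is handled separately: its block factors out of every sum, so the diagonal region degenerates and one recovers the reduced rule $yu\shuffle_{BZ}v=y(u\shuffle_{BZ}v)$ of item~(2); the normalisation (1) is immediate since the empty word maps to $1$.

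I expect the main obstacle to be precisely the diagonal computation: one must choose the representing blocks and the change of basis to $p,\overline p$ so that the collision of two leading indices closes up on a \emph{single} basis element with the advertised scalar — any miscalculation there corrupts the structure constants. A secondary obstacle is the injectivity lemma, which is indispensable if associativity and commutativity are to be inherited from $\mathcal A$; should independence prove delicate, I would instead establish associativity and commutativity directly by induction on $\length(u)+\length(v)$ using the relations (1)--(3), the only genuinely non-routine case again being the bookkeeping of the diagonal terms when three leading indices coincide.
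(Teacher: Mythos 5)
First, a remark on the comparison you asked for: this theorem is recalled by the paper as background and attributed to Singer; the paper itself gives no proof, only citations (Singer 2015, 2016, 2016a). So the only thing to assess is whether your blind strategy would actually work. Its overall shape --- realise words as $q$-series, transport the commutative associative pointwise product back along a linear map $\Phi$, split the summation domain of a product of two nested sums, and compute the diagonal contribution with a $q$-identity --- is indeed the standard route by which such stuffle-type formulas are derived, and your identification of the diagonal computation as the crux is accurate. But there are two concrete gaps. The first is the encoding. You posit one summation index per letter with strict nesting $m_1>\dots>m_n>0$. That cannot reproduce the Bradley--Zhao series: the factor $(1-q^{m_i})^{k_i}$ forces all the letters making up the $i$-th block to sit at the \emph{same} index $m_i$; in Singer's encoding only the letter $y$ opens a new, strictly smaller index, while $p$ and $\overline{p}$ act as multiplication by $\tfrac{q^m}{1-q^m}$ and $\tfrac{1}{1-q^m}$ at the current index. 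With one index per letter the denominators would be squarefree in the $m_i$. Consequently, "compare the two leading indices" does not directly yield the letterwise recursion of item (3): when $a,b\in\{p,\overline{p}\}$ the leading letters are not attached to fresh indices at all. The correct derivation combines the weight-one Rota--Baxter identity for the summation operator attached to $y$ (which produces the three-term splitting) with the partial-fraction identities relating $\tfrac{q^{2m}}{(1-q^m)^2}$, $\tfrac{q^m}{(1-q^m)^2}$, $\tfrac{1}{(1-q^m)^2}$, $\tfrac{q^m}{1-q^m}$ and $\tfrac{1}{1-q^m}$; it is these identities, not a naive index collision, that produce $[p,p]=1=-[\overline{p},\overline{p}]$ and the vanishing of the mixed brackets.

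The second gap concerns the transport of structure. The theorem asserts the recursion and the commutativity/associativity for \emph{all} words in $\{y,p,\overline{p}\}^*$, including non-admissible ones (those not encoding a convergent $\zeta_q^{BZ}$, e.g.\ words violating $k_1\geq 2$), so $\Phi$ is not even defined everywhere as a convergent series, and injectivity on the admissible part is a delicate and, for this purpose, unnecessary detour. Your own fallback --- define the product by relations (1)--(3) and prove commutativity and associativity by induction on $\length(u)+\length(v)$, with the only nontrivial bookkeeping occurring when leading letters collide --- is the sound route, requires no injectivity lemma, and is exactly in the spirit of what this paper does for general weak shuffle and weak stuffle products (the induction in the proof of Theorem \ref{carctwshuffle} and the relations of Proposition \ref{relationsstuffle}); indeed the Bradley--Zhao product is the weak stuffle product with $f_1(y\ot p)=f_1(y\ot\overline{p})=f_1(p\ot\overline{p})=f_1(\overline{p}\ot p)=f_1(p\ot p)=f_2(p\ot p)=f_1(\overline{p}\ot\overline{p})=f_2(\overline{p}\ot\overline{p})=f_1(y\ot y)=1$, $f_1(p\ot y)=f_1(\overline{p}\ot y)=f_2(y\ot y)=0$, $f_3(p\ot p)=p$, $f_3(\overline{p}\ot\overline{p})=-\overline{p}$ and $f_3=0$ otherwise, as listed in the examples of Section 4. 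I would recommend promoting that fallback to the main argument and keeping the analytic realisation only as motivation for the structure constants.
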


\section{Definition and characterisation of weak shuffle products}
The aim of this section is to define a generalisation of the classical shuffle product, the classical stuffle product, and the two $q$-shuffle products given by the Schlesinger-Zudilin model and the Bradley-Zhao model. We give and prove a characterisation of weak shuffle products too. Then we explicit the case of an alphabet of cardinality $2$ or $3$.
\subsection{Characterisation}
\begin{defi}
	An alphabet is a non-empty finite or countable set $X$.
\end{defi}

\begin{defi}
	Let $X$ be an alphabet. We denote by $X^*$ the set of words on the alphabet $X$ and by  $\K\langle X\rangle$ the tensor algebra generated by $X$ (\emph{i.e.} the algebra of words on $X$). The space $\K\langle X\rangle$ is graded by the length of words.
\end{defi}

\begin{defi}
	Let $X$ be an alphabet. A weak stuffle product on $\K\langle X\rangle$ is an associative and commutative product $\Box$ such that for any $(a,b)\in(X)^2$ and any $(u,v)\in(X^*)^2$ 
	\begin{align*}
	u\Box 1=&1\Box u=u,\\
	u\Box 0=&0\Box u=0,\\
	au\Box bv=& f_1(a\ot b)a(u\Box bv)+f_2(a\ot b)b(au\Box v)+f_3(a\ot b)(u\Box v)
	\end{align*}
	
	where 
	\begin{enumerate}
		\item $f_1$ and $f_2$ are linear maps from $\K.X\ot \K.X$ to $\K$,
		\item $f_3=kg$ is a linear map from $\K.X\ot \K.X$ to $\K.X$ such that $k(a\ot b)\in\K$ and $g(a\ot b)\in X$ for any $(a,b)\in X^2$,
		\item If $f_3\equiv 0$ then the product $\Box$ is called a weak shuffle product.
	\end{enumerate}
\end{defi}
\Ex{} Let $X=\{x_1,\dots,x_n,\dots\}$ be an infinite alphabet.
\begin{enumerate} 
	\item The classical shuffle product on $\K\langle X\rangle$ is a weak stuffle product where $f_1(a\otimes b )=1$ and  $f_2(a\otimes b)=1$ for any  $(a,b)\in X^2$, and $f_3\equiv0$.
	\item The classical stuffle product on $\K\langle X\rangle$ is a weak stuffle product where $f_1(a\otimes b )=1$ and  $f_2(a\otimes b)=1$ for any  $(a,b)\in X^2$, and $f_3(x_i\ot x_j)=x_{i+j}$ for any $(i,j)\in(\N^*)^2$.
	\item The stuffle product on $\K\langle X\rangle$ given by Hoffman and Ihara in \cite{Hoffman2017} is a weak stuffle product where $f_1(a\otimes b )=1$ and  $f_2(a\otimes b)=1$ for any  $(a,b)\in X^2$, and  $f_3(x_i\ot x_j)=-x_{i+j}$ for any $(i,j)\in(\N^*)^2$.
\end{enumerate}

\begin{theo} \label{carctwshuffle} Let $\Box$ be a product on $\K\langle X\rangle$. The map $\Box$ is a weak shuffle product if and only if, for any distinct letters $a$, $b$, and $c$ in $X$: 
\begin{enumerate}
		\item $f_1(a\ot b)=f_2(b\ot a)$. \label{relun}
		\item	\label{reltrois}\begin{enumerate}
			\item \label{reltroisun} either $f_1(a\ot a)=f_2(a\ot a)=\alpha$ with $\alpha\in\{0,1\}$ and
			\begin{enumerate}
				\item $f_1(a\ot b)f_1(b\ot a)[f_1(a\ot a)-1]=0$,
				\item $f_1(a\ot a)f_1(a\ot b)[f_1(a\ot b)-1]=0$,
				\item $f_1(a\ot a)f_1(b\ot a)[f_1(b\ot a)-1]=0$.
			\end{enumerate}
			\item \label{reltroisdeux} or $f_1(a\ot a)=\alpha$, $f_2(a\ot a)=1-\alpha$ with $\alpha\in\R$ and
			\begin{enumerate}
				\item $f_1(a\ot b)=1$,
				\item $f_1(b\ot a)=0$.
			\end{enumerate}
		\end{enumerate}
		\item $f_1(a\ot b)f_1(b\ot c)[f_1(a\ot c)-1]=0$. \label{relquatre}
		\item $f_3\equiv0$. \label{relcinq}
	\end{enumerate}	
\end{theo}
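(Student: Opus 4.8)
The plan is to prove both implications by induction on the total length of the words, expanding every occurrence of $\Box$ through the defining recursion and comparing coefficients of basis words. Condition \ref{relcinq} is nothing but the defining requirement $f_3\equiv 0$, so the substance is to translate commutativity and associativity of $\Box$ into conditions \ref{relun}, \ref{reltrois} and \ref{relquatre}. I would separate the contributions of the two axioms, since --- perhaps surprisingly --- a large part of condition \ref{reltrois} already comes from commutativity alone.

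First I would harvest everything commutativity gives. Comparing $a\Box b$ with $b\Box a$ for two letters yields $f_1(a\ot b)=f_2(b\ot a)$, which is \ref{relun}; conversely, assuming \ref{relun}, an easy induction on $|u|+|v|$ shows the two expansions of $au\Box bv$ and $bv\Box au$ coincide termwise, so $\Box$ is commutative. Pushing commutativity to length three is where condition \ref{reltrois} begins to appear: writing $p=f_1(a\ot a)$ and $q=f_2(a\ot a)$, the identity $a\Box aa=aa\Box a$ forces $(p-q)(p+q-1)=0$, i.e. the alternative $p=q$ versus $p+q=1$ of \ref{reltroisun}/\ref{reltroisdeux}; and the mixed identity $a\Box ab=ab\Box a$ gives $(p-q)\bigl(f_1(a\ot b)-1\bigr)=0$ together with $(p-q)f_1(b\ot a)=0$. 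Thus as soon as $p\neq q$ (the genuine case \ref{reltroisdeux}, where necessarily $p+q=1$) one reads off $f_1(a\ot b)=1$ and $f_1(b\ot a)=0$.

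Next I would bring in associativity. For three pairwise distinct letters, expanding $(a\Box b)\Box c$ and $a\Box(b\Box c)$ and equating the coefficients of the six words $abc,\dots,cba$ produces $f_1(a\ot b)f_1(b\ot c)[f_1(a\ot c)-1]=0$ and its images under the permutations of $\{a,b,c\}$, which after using \ref{relun} is exactly \ref{relquatre}. For the regime of two equal and one distinct letter I would equate the coefficients of $aab,aba,baa$ in $(a\Box a)\Box b$ and $a\Box(a\Box b)$; using \ref{relun} this returns $f_1(a\ot a)f_1(a\ot b)[f_1(a\ot b)-1]=0$ and $f_1(a\ot b)f_1(b\ot a)[f_1(a\ot a)-1]=0$, which are exactly \ref{reltroisun}(ii),(i), together with $(p+q)f_1(b\ot a)[f_1(b\ot a)-1]=0$, which reduces to \ref{reltroisun}(iii) in case (a). Finally, to force $\alpha\in\{0,1\}$ in case \ref{reltroisun} I would analyse the one-letter subalgebra $\Span\{a^n\}$, where $a^i\Box a^j=e(i,j)\,a^{i+j}$ with $e(i,j)=p\,e(i-1,j)+q\,e(i,j-1)$ and $e(i,0)=e(0,j)=1$: when this subalgebra is generated by $a$, associativity is equivalent to $e(i,j)=g_{i+j}/(g_ig_j)$, which upon substitution becomes the functional equation $r_{i+j}=p\,r_i+q\,r_j$ for $r_n=e(1,n-1)$; swapping $i,j$ gives $(p-q)(r_i-r_j)=0$, so either $p+q=1$ (then $r\equiv 1$) or $p=q$, in which case solving to degree four yields $p(2p-1)(p-1)=0$, that is $p=q\in\{0,1\}$ after discarding the overlapping value $p=q=\tfrac12$ (which lies in the family $p+q=1$).

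For the converse I would assume \ref{relun}--\ref{relcinq} and prove commutativity and associativity together by induction on $|u|+|v|+|w|$, expanding $(au\Box bv)\Box cw$ and $au\Box(bv\Box cw)$ one letter at a time, applying the induction hypothesis to the shorter products, and checking that each coefficient identity that arises is one of the instances of \ref{relun}, \ref{reltrois}, \ref{relquatre} established above. I expect the main obstacle to be exactly the sufficiency of relations read off from short words: the necessary conditions above are all extracted from words of length $\le 4$, and one must verify that no independent relation is hidden in longer words and that the delicate value $p=q=\tfrac12$ is handled consistently. That low lengths genuinely do not suffice is already visible on a two-letter alphabet with $f_1(a\ot a)=f_2(a\ot a)=\tfrac12$ and $f_1(a\ot b)=f_1(b\ot a)=0$, which passes every length-three test but violates \ref{reltrois} and fails associativity at $aaab$: one computes $(ab\Box a)\Box a=\tfrac38\,aaab$ whereas $ab\Box(a\Box a)=\tfrac14\,aaab$. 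It is to certify this finite but intricate case analysis on alphabets of small cardinality that the computer verifications underlying Lemma \ref{lemmaprogramme} are the natural instrument.
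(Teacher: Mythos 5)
Your strategy is essentially the paper's: extract the necessary relations from commutativity and associativity applied to words of small total length, then prove sufficiency by induction on lengths. Your length-two and length-three computations are correct and match the paper's items (A)--(E) (condition \ref{relun} from $a\Box b=b\Box a$; the dichotomy $(p-q)(p+q-1)=0$ from $a\Box aa=aa\Box a$; the relations $(p-q)[f_1(a\ot b)-1]=0$ and $(p-q)f_1(b\ot a)=0$ from $a\Box ab=ab\Box a$; \ref{reltroisun} and \ref{relquatre} from associativity of letters), and your counterexample with $p=q=\tfrac12$ and $f_1(a\ot b)=f_1(b\ot a)=0$ is correct ($\tfrac38\,aaab$ against $\tfrac14\,aaab$). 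But there are genuine gaps exactly where the paper does extra work. First, the case $f_1(a\ot a)=f_2(a\ot a)=\tfrac12$ is never closed: for the forward direction you must prove that \emph{every} weak shuffle product with this value satisfies $f_1(a\ot b)=1$ and $f_1(b\ot a)=0$ for all $b\neq a$, so that it lands in alternative \ref{reltroisdeux}; your commutativity relations are vacuous when $p=q$, and your counterexample exhibits one failure instead of deriving the constraint. The paper gets it from the length-four identity $(a\Box a)\Box ab=a\Box(a\Box ab)$ (item (F) of its proof), which is precisely the computation your $\tfrac38$-versus-$\tfrac14$ example performs in a special case; it must be run with general coefficients. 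Second, your one-letter-subalgebra argument presupposes that the structure constants $e(i,j)$ are all nonzero (otherwise $e(i,j)=g_{i+j}/(g_ig_j)$ is unavailable, e.g.\ for $p=q=0$), and raw associativity at total degree four, $(a\Box a)\Box aa=a\Box(a\Box aa)$, actually yields $p^2(1+2p)(2p-1)(p-1)=0$: the root $p=-\tfrac12$ survives (there $e(1,2)=0$, so your generation hypothesis silently excludes it without justification) and is only eliminated at degree five, which is why the paper also invokes $(a\Box a)\Box aaa=a\Box(a\Box aaa)$ in its item (G).

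Third, deferring the verification that ``no independent relation is hidden in longer words'' to the computer programs is both a misattribution and insufficient: Lemma \ref{lemmaprogramme} concerns the absence of square-zero homogeneous elements for $\underset{9}{\Box}$ and plays no role in this theorem, and finite checks on small alphabets cannot establish the converse for a countable alphabet. The converse has to be, and in the paper is, a genuine induction on $\length(u)+\length(v)$ and on $\length(w_1)+\length(w_2)+\length(w_3)$. Its delicate case is precisely $f_2(a\ot a)=1-f_1(a\ot a)$, where your side remark that condition \ref{relun} alone gives termwise commutativity breaks down: the termwise matching of $au\Box av$ with $av\Box au$ requires $av\Box u=v\Box au$, which is not an instance of the induction hypothesis when $f_1(a\ot a)\neq f_2(a\ot a)$. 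The paper circumvents this by factoring every word as $a^kw'$ with $w'\notin aX^{*}$, proving $a^k\Box a^l=a^{k+l}$, and using $f_1(a\ot c)=1$, $f_1(c\ot a)=0$ for $c\neq a$ to make the product respect that factorisation; some such device is needed to complete your induction.
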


\Rq{1} It is sometimes usefull to use in calculations the following statement  induced by the item \ref{reltroisdeux} of the Theorem \ref{carctwshuffle}:

"If $f_1(a\ot b)=0$ or $f_1(b\ot a)\neq0$ then  $f_1(a\ot a)=f_2(a\ot a)=\alpha$ with $\alpha\in\{0,1\}$." \label{reldeux}

\begin{proof}	
	 Let us prove first the direct implication. Let us assume $\Box$ is a weak shuffle product. Let $a$, $b$, and $c$ be three distinct letters.  Then, by direct calculations, 
	\begin{enumerate}[label=(\Alph*)]
		\item \label{relA} $a\Box b=b\Box a$ gives relation $f_1(a\ot b)=f_2(b\ot a)$.
		\item \label{relB} $a\Box aa=aa\Box a$ gives $f_1(a\ot a)=f_2(a\ot a)$ or $f_1(a\ot a)=1-f_2(a\ot a)$.
		\item \label{relC} $a\Box ab=ab\Box a$ gives, if $f_1(a\ot b)=0$ or $f_1(b\ot a)\neq0$, that $f_1(a\ot a)=f_2(a\ot a)$. Thus, if $f_1(a\ot a)=1-f_2(a\ot a)$ and  $f_1(a\ot a)\neq \frac{1}{2}$ then $f_1(a\ot b)\neq0$ and $f_1(b\ot a)=0$.  The relation $a\Box ab=ab\Box a$ implies $f_1(a\ot b)=1$.
		\item \label{relD} $(a\Box a)\Box b=a\Box (a\Box b)=(a\Box b)\Box a$ with $f_1(a\ot a)=f_2(a\ot a)$  give
		\begin{enumerate}
			\item $f_1(a\ot b)f_1(b\ot a)[f_1(a\ot a)-1]=0$,
			\item $f_1(a\ot a)f_1(a\ot b)[f_1(a\ot b)-1]=0$,
			\item $f_1(a\ot a)f_1(b\ot a)[f_1(b\ot a)-1]=0$.
		\end{enumerate}
		\item \label{relE} $(a\Box b)\Box c=a\Box(b\Box c)$ gives $f_1(a\ot b)f_1(b\ot c)[f_1(a\ot c)-1]=0$.
		\item \label{relF} $(a\Box a)\Box ab=a\Box (a\Box ab)$ implies that if $f_1(a\ot a)=1-f_2(a\ot a)=\frac{1}{2}$ then $f_1(a\ot b)=1$ and $f_1(b\ot a)=0$.
		\item \label{relG} $(a\Box a)\Box aa=a\Box (a\Box aa)$ and $(a\Box a)\Box aaa=a\Box (a\Box aaa)$ implies that if $f_1(a\ot a)=f_2(a\ot a)=\alpha$ then $\alpha\in\{0,1,\frac{1}{2}\}$. 
		\item \label{relH} Cases $ba\Box a=a\Box ba$, $aa\Box b=b\Box aa$, $ab\Box c=c\Box ab$ and $(a\Box a)\Box a=a\Box (a\Box a)$ do not give any further relations.
	\end{enumerate}
	As a consequence, in the theorem \ref{carctwshuffle}, 
	\begin{itemize}
		\item the item \ref{relun} is proved by the item \ref{relA},
		\item the item \ref{reltroisun} is proved by the items \ref{relB}, \ref{relD}, \ref{relF} and \ref{relG},
		\item the item \ref{reltroisdeux} is proved by the items \ref{relB}, \ref{relC} and \ref{relF},
		\item the item \ref{relquatre} is proved by the item \ref{relE},
		\item the item \ref{relcinq} is satisfied by the definition of a weak shuffle product.
	\end{itemize}
	
	Conversly, if $\Box$ satisfies all relations given in Theorem \ref{carctwshuffle} then for any couple $(u,v)$ and any triple $(w_1,w_2,w_3)$ of words such that $\length(u)+\length(v)\leq 3$ and $\length(w_1)+\length(w_2)+\length(w_3)\leq 3$ one has: $u\Box v=v\Box u$ and $(w_1\Box w_2)\Box w_3=w_1\Box (w_2\Box w_3)$. 
	
	We assume now there exists an integer $n\geq3$ such that $u\Box v=v\Box u$ and $(w_1\Box w_2)\Box w_3=w_1\Box (w_2\Box w_3)$ for any words $u$, $v$, $w_1$, $w_2$ with $\length(u)+\length(v)\leq n$ and $\length(w_1)+\length(w_2)+\length(w_3)\leq n$.
	
	Let now $u$ and $v$ be two words such that $\length(u)+\length(v)=n+1$. Then there exist two letters $a$ and $b$ and two words $w_1$ and $w_2$ (not necessarily non-empty) such that $u=aw_1$ and $v=bw_2$. Then, by induction, we get:
	\begin{description}
		\item[case $a\neq b$.] \begin{align*}
			u\Box v=&f_1(a\ot b)a(w_1\Box bw_2)+f_1(b\ot a)b(aw_1\Box w_2)\\
			=&f_1(a\ot b)a(bw_2\Box w_1)+f_1(b\ot a)b(w_2\Box aw_1)=v\Box u.
		\end{align*}
		\item[case $a=b$ and $f_1(a\ot a)=f_2(a\ot a)$.] \begin{align*}
			u\Box v=&f_1(a\ot a)a(w_1\Box aw_2)+f_1(a\ot a)a(aw_1\Box w_2)\\
			=&f_1(a\ot a)a(aw_2\Box w_1)+f_1(a\ot a)a(w_2\Box aw_1)=v\Box u.
		\end{align*} 
		\item[case $a=b$ and $f_2(a\ot a)=1-f_1(a\ot a)$.] There exist two words $w_3$ and $w_4$, not necessarily non-empty, not starting by $a$ and two positive integers $k$ and $l$ such that $w_1=\underbrace{a\dots a}_{k \text{ times}}w_3$ and $w_2=\underbrace{a\dots a}_{l \text{ times}}w_4$. 
		First of all, by induction, \[\underbrace{a\dots a}_{k \text{ times}}\Box\underbrace{a\dots a}_{l \text{ times}}=\underbrace{a\dots a}_{k+l \text{ times}}.\]
		Besides, relations satisfied by $\Box$ enjoin $f_1(a\ot c)=1$ and $f_2(c\ot a)=0$ for any letter $c\neq a$. So,
		\[u\Box v=(\underbrace{a\dots a}_{k \text{ times}}\Box \underbrace{a\dots a}_{l \text{ times}})(w_3\Box w_4)=(\underbrace{a\dots a}_{l \text{ times}}\Box \underbrace{a\dots a}_{k \text{ times}})(w_4\Box w_3)=v\Box u.\]
	\end{description}
	As a consequence, $\Box$ is a commutative product.
	
	Let now $w_1$, $w_2$ and $w_3$ be three words such that $\length(w_1)+\length(w_2)+\length(w_3)=n+1$. Then there exist three letters $a$, $b$ and $c$ and three words $w_4$, $w_5$ and $w_6$ (not necessarily non-empty) such that $w_1=aw_4$, $w_2=bw_5$ and $w_3=cw_6$. Then, by induction, we get:
		\begin{description}
		\item[case $a$, $b$ and $c$ distinct.] \begin{align*}
		(w_1\Box w_2)\Box w_3=&f_1(a\ot b)f_1(a\ot c)a[(w_4\Box bw_5)\Box cw_6]+f_1(a\ot b)f_1(c\ot a)c[a(w_4\Box bw_5)\Box w_6]\\
		+&f_1(b\ot a)f_1(b\ot c)b[(aw_4\Box w_5)\Box cw_6]+f_1(b\ot a)f_1(c\ot b)c[b(aw_4\Box w_5)\Box w_6]
		\end{align*}
		and
		\begin{align*}
		w_1\Box (w_2\Box w_3)=&f_1(b\ot c)f_1(a\ot b)a[w_4\Box b(w_5\Box cw_6)]+f_1(b\ot c)f_1(b\ot a)b[aw_4\Box (w_5\Box cw_6)]\\
		+&f_1(c\ot b)f_1(a\ot c)a[w_4\Box c(bw_5\Box w_6)]+f_1(c\ot b)f_1(c\ot a)c[aw_4\Box (bw_5\Box w_6)].
		\end{align*}
		However 
		\[(w_4\Box bw_5)\Box cw_6=w_4\Box (bw_5\Box cw_6)=f_1(b\ot c)w_4\Box b(w_5 \Box cw_6)+f_1(c\ot b)w_4\Box c(bw_5 \Box w_6),\]
		\[aw_4\Box (bw_5\Box w_6)=(aw_4\Box bw_5)\Box w_6=f_1(a\ot b)a(w_4\Box bw_5) \Box w_6+f_1(b\ot a)b(aw_4\Box w_5) \Box w_6,\]
		and $f_1$ satisfies $f_1(x\ot y)f_1(y\ot z)\left(f_1(x\ot z)-1\right)=0$ for any set $\{x,y,z\}\in X$.
		Thus, $(w_1\Box w_2)\Box w_3=w_1\Box (w_2\Box w_3)$.
		\item[case $a=b$ and $(a\neq c)$.] By commutativity it is the same case as ($a=c$ and $b\neq a$) or ($b=c$ and $a\neq b$).
		\begin{align*}
		(w_1\Box w_2)\Box w_3=&f_1(a\ot a)f_1(a\ot c)a[(w_4\Box aw_5)\Box cw_6]+f_1(a\ot a)f_1(c\ot a)c[a(w_4\Box aw_5)\Box w_6]\\
		+&f_2(a\ot a)f_1(a\ot c)a[(aw_4\Box w_5)\Box cw_6]+f_2(a\ot a)f_1(c\ot a)c[a(aw_4\Box w_5)\Box w_6]
		\end{align*}
		and
		\begin{align*}
			w_1\Box (w_2\Box w_3)=&f_1(a\ot c)f_1(a\ot a)a[w_4\Box a(w_5\Box cw_6)]+f_1(a\ot c)f_2(a\ot a)a[aw_4\Box (w_5\Box cw_6)]\\
			+&f_1(c\ot a)f_1(a\ot c)a[w_4\Box c(aw_5\Box w_6)]+f_1(c\ot a)^2c[aw_4\Box (aw_5\Box w_6)].
		\end{align*}
		However 
		\[(w_4\Box aw_5)\Box cw_6=w_4\Box (aw_5\Box cw_6)=f_1(a\ot c)w_4\Box a(w_5 \Box cw_6)+f_1(c\ot a)w_4\Box c(aw_5 \Box w_6),\]
		\[aw_4\Box (aw_5\Box w_6)=(aw_4\Box aw_5)\Box w_6=f_1(a\ot a)a(w_4\Box aw_5) \Box w_6+f_2(a\ot a)a(aw_4\Box w_5) \Box w_6,\]
		and $f_1$ satisfies 
		\begin{enumerate}
			\item If $f_1(a\ot a)=f_2(a\ot a)\in\{0,1\}$ then
				\begin{enumerate}
					\item $f_1(a\ot b)f_1(b\ot a)[f_1(a\ot a)-1]=0$,
					\item $f_1(a\ot a)f_1(a\ot b)[f_1(a\ot b)-1]=0$,
					\item $f_1(a\ot a)f_1(b\ot a)[f_1(b\ot a)-1]=0$.
				\end{enumerate}
			\item If $f_1(a\ot a)=1-f_2(a\ot a)$ then $f_1(a\ot c)=1$ and $f_1(c\ot a)=0$.
		\end{enumerate}
		Thus, $(w_1\Box w_2)\Box w_3=w_1\Box (w_2\Box w_3)$.
		\item[case $a=b=c$ and $f_1(a\ot a)=f_2(a\ot a)$.] 
		\begin{align*}
		(w_1\Box w_2)\Box w_3=&f_1(a\ot a)^2a[(w_4\Box aw_5)\Box aw_6]+f_1(a\ot a)^2a[a(w_4\Box aw_5)\Box w_6]\\
		+&f_1(a\ot a)^2a[(aw_4\Box w_5)\Box aw_6]+f_1(a\ot a)^2a[a(aw_4\Box w_5)\Box w_6]
		\end{align*}
		and
		\begin{align*}
		w_1\Box (w_2\Box w_3)=&f_1(a\ot a)^2a[w_4\Box a(w_5\Box aw_6)]+f_1(a\ot a)^2a[aw_4\Box (w_5\Box aw_6)]\\
		+&f_1(a\ot a)^2a[w_4\Box a(aw_5\Box w_6)]+f_1(a\ot a)^2a[aw_4\Box (aw_5\Box w_6)].
		\end{align*}
		Thus, $(w_1\Box w_2)\Box w_3=w_1\Box (w_2\Box w_3)$.
		\item[case $a=b=c$ and $f_2(a\ot a)=1-f_1(a\ot a)$.] There exist three words $w_7$, $w_8$ and $w_9$ not necessarily non-empty, not starting by $a$ and three positive integers $k$, $l$ and $m$ such that $w_1=\underbrace{a\dots a}_{k \text{ times}}w_7$, $w_2=\underbrace{a\dots a}_{l \text{ times}}w_8$ and $w_3=\underbrace{a\dots a}_{m \text{ times}}w_9$. Besides, relations satisfied by $\Box$ enjoin $f_1(a\ot c)=1$ and $f_2(c\ot a)=0$ for any letter $c\neq a$. So,
		\begin{align*}
			(w_1\Box w_2) \Box w_3=&\Bigg[(\underbrace{a\dots a}_{k \text{ times}}\Box \underbrace{a\dots a}_{l \text{ times}})\Box \underbrace{a\dots a}_{k \text{ times}}  \Bigg]\Bigg[(w_7\Box w_8)\Box w_9\Bigg]\\
			=&\underbrace{a\dots a}_{k+l+m \text{ times}}\Bigg[(w_7\Box w_8)\Box w_9\Bigg]\\
			=&\Bigg[\underbrace{a\dots a}_{k \text{ times}}\Box (\underbrace{a\dots a}_{l \text{ times}}\Box \underbrace{a\dots a}_{k \text{ times}})  \Bigg]\Bigg[w_7\Box (w_8\Box w_9)\Bigg]=w_1\Box (w_2 \Box w_3).
		\end{align*}
	\end{description}
\end{proof}

\begin{cor}\label{corcarac}
	Let $\K$ be a field of characteristic 0, let $X$ be a countable alphabet and let $\Box$ be a weak shuffle product on $\K\langle X\rangle$.
	\begin{enumerate}
		\item There exists at most one letter $a$ such that $f_1(a\ot a)=1-f_2(a\ot a)$. 
		\item If there exists a letter $a$ such that $f_1(a\ot a)=1-f_2(a\ot a)$ then, for any word $u$ and $v$, the calculation of $u\Box v$ does not depend on the value of $f_1(a\ot a)$
		\item If $f_1(a\ot b)=f_1(b\ot a)=1$ then $f_1(a\ot a)=f_2(a\ot a)=f_1(b\ot b)=f_2(b\ot b)=1$, $f_1(a\ot c)=f_1(b\ot c)\in\{0,1\}$ and $f_1(c\ot a)=f_1(c\ot b)\in\{0,1\}$ for any $c\in X\setminus\{a,b\}$.
	\end{enumerate} 
\end{cor}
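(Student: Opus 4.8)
The plan is to read off all three items from the characterization in Theorem~\ref{carctwshuffle}, distinguishing the two regimes of item~\ref{reltrois}. Call a letter $a$ of \emph{symmetric type} when item~\ref{reltroisun} holds ($f_1(a\ot a)=f_2(a\ot a)=\alpha\in\{0,1\}$), and of \emph{split type} when item~\ref{reltroisdeux} holds; the condition $f_1(a\ot a)=1-f_2(a\ot a)$ is precisely the split-type condition, and item~\ref{reltroisdeux} then forces $f_1(a\ot c)=1$ and $f_1(c\ot a)=0$ for every $c\neq a$. For item~1 I would argue by contradiction: if $a\neq b$ were both of split type, then item~\ref{reltroisdeux} applied to $a$ (with $c=b$) gives $f_1(a\ot b)=1$, while applied to $b$ (with $c=a$) it gives $f_1(a\ot b)=0$, a contradiction; hence at most one split-type letter exists.

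For item~3, I would first rule out split type for $a$ and $b$: a split-type $a$ would force $f_1(c\ot a)=0$ for all $c\neq a$, contradicting $f_1(b\ot a)=1$, and symmetrically $b$ cannot be split. So both are symmetric, with diagonal values $\alpha_a,\alpha_b\in\{0,1\}$. The first relation of item~\ref{reltroisun}, namely $f_1(a\ot b)f_1(b\ot a)[f_1(a\ot a)-1]=0$, becomes $1\cdot 1\cdot(\alpha_a-1)=0$, so $\alpha_a=1$, and symmetrically $\alpha_b=1$; this yields $f_1(a\ot a)=f_2(a\ot a)=f_1(b\ot b)=f_2(b\ot b)=1$. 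For $c\notin\{a,b\}$, the second and third relations of item~\ref{reltroisun} (now with diagonal value $1$) read $f_1(a\ot c)[f_1(a\ot c)-1]=0$ and $f_1(c\ot a)[f_1(c\ot a)-1]=0$, and likewise for $b$, so the four quantities $f_1(a\ot c),f_1(b\ot c),f_1(c\ot a),f_1(c\ot b)$ all lie in $\{0,1\}$. Finally I would apply item~\ref{relquatre} to the triples $(a,b,c)$ and $(b,a,c)$ to obtain $f_1(b\ot c)[f_1(a\ot c)-1]=0$ and $f_1(a\ot c)[f_1(b\ot c)-1]=0$ (using $f_1(a\ot b)=f_1(b\ot a)=1$); a two-line check on $\{0,1\}$-valued unknowns rules out the mixed values $(1,0)$ and $(0,1)$, giving $f_1(a\ot c)=f_1(b\ot c)$. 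The triples $(c,a,b)$ and $(c,b,a)$ give the symmetric relations and hence $f_1(c\ot a)=f_1(c\ot b)$.

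Item~2 is where the real work lies. By item~1, $a$ is the unique split-type letter, so $\alpha:=f_1(a\ot a)$ is the only free scalar, with $f_2(a\ot a)=1-\alpha$ determined and all other coefficients of the recursion fixed. Since the recursion for $pu\Box qv$ uses only $f_1(p\ot q)$ and $f_2(p\ot q)$, the parameter $\alpha$ can enter a computation of $u\Box v$ \emph{only} at steps where both words begin with $a$. The crux is to show those steps are $\alpha$-free. I would first establish, by induction on $\length(u)+\length(v)$, the auxiliary identity $au\Box v=a(u\Box v)$: when the first letter of $v$ differs from $a$ this is immediate from $f_1(a\ot\cdot)=1$ and $f_2(a\ot\cdot)=0$; when $v$ also begins with $a$, the two terms weighted by $\alpha$ and $1-\alpha$ become equal by the induction hypothesis together with commutativity, so their sum collapses to the single $\alpha$-free term $aa(u\Box v_1)$. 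Granting this identity, item~2 follows by a second induction on length: for $(p,q)\neq(a,a)$ the recursion has $\alpha$-free coefficients and, by induction, $\alpha$-free subproducts, while for $p=q=a$ the identity rewrites $au'\Box av'$ as $aa(u'\Box v')$, again $\alpha$-free.

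The step I expect to demand the most care is exactly this cancellation inside the auxiliary identity: one must verify that the two expressions multiplied by $\alpha$ and $1-\alpha$ are genuinely equal, which is where the split-type relations $f_1(a\ot c)=1$, $f_1(c\ot a)=0$, the hypothesis $f_2(a\ot a)=1-f_1(a\ot a)$, and commutativity (already available from Theorem~\ref{carctwshuffle}) must all be combined correctly; the bookkeeping of the nested inductions, rather than any single computation, is the main obstacle.
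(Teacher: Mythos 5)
Your proposal is correct, and for items 1 and 3 it is essentially the paper's own argument: item 1 is the same contradiction ($f_1(a\ot b)$ forced to be both $1$ and $0$ when two letters are of split type), and item 3 is the same two-step use of the relations in item \ref{reltrois} followed by $f_1(x\ot y)f_1(y\ot z)[f_1(x\ot z)-1]=0$ applied to the various orderings of $\{a,b,c\}$; your explicit case check on $\{0,1\}$-valued unknowns is just a spelled-out version of what the paper leaves implicit.

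For item 2 your route differs slightly in mechanism, and is in fact more complete. The paper's justification is a one-liner: for $u,v\in X^*\setminus aX^*$ the recursion never produces a sub-product of the form $au'\Box av'$ (because $f_1(a\ot b)=1$ and $f_1(b\ot a)=0$ for $b\neq a$), and the case of words beginning with $a$ is handled only implicitly, via the identity $\underbrace{a\dots a}_{k}w_3\Box\underbrace{a\dots a}_{l}w_4=(\underbrace{a\dots a}_{k}\Box\underbrace{a\dots a}_{l})(w_3\Box w_4)$ established in the proof of Theorem \ref{carctwshuffle}. Your auxiliary identity $au\Box v=a(u\Box v)$, proved by induction and using commutativity to show that the $\alpha$- and $(1-\alpha)$-weighted terms coincide, attacks the problematic step head-on: it shows that whenever the recursion does reach $au'\Box av'$ the dependence on $\alpha$ cancels, which subsumes both of the paper's cases in one statement. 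The cancellation you flag as delicate does go through exactly as you describe, so nothing is missing; your version simply makes explicit a reduction that the paper delegates to the proof of the characterisation theorem.
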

\begin{proof}
	\begin{enumerate}
		\item If there are two letters $a$ and $b$ such that $a\neq b$, $f_1(a\ot a)=1-f_2(a\ot a)$ and $f_1(b\ot b)=1-f_2(b\ot b)$ then $1=f_1(a\ot b)=0$ and $0=f_1(b\ot a) =1$. Contradiction.
		\item Let $a$ such that $f_1(a\ot a)=1-f_2(a\ot a)$. If $u$ and $v$ are words in $X^{*}\setminus aX^{*}$, since $f_1(a\ot b)=1$ and $f_1(b\ot a)=0$ for any $b\neq a$, there does not exist any triple $(w,u^{'},v^{'})$ such that $u\Box v=w(au^{'}\Box av^{'})$.
		\item If $f_1(a\ot b)=f_1(b\ot a)=1$ then, the fact that  $f_1(a\ot a)=f_2(a\ot a)=f_1(b\ot b)=f_2(b\ot b)=1$ comes directly from relations \ref{reltrois} given in Theorem \ref{carctwshuffle}. To prove $f_1(a\ot c)=f_1(b\ot c)\in\{0,1\}$ and $f_1(c\ot a)=f_1(c\ot b)\in\{0,1\}$ for any $c\in X\setminus\{a,b\}$, we use the relation $f_1(x\ot y)f_1(y\ot z)[f_1(x\ot z)-1]=0$ for any $x,y,z\in X$. 
	\end{enumerate}
\end{proof}
\begin{prop}\label{propisom}
	Let $\K$ be a field of characteristic 0, $X$ be a countable alphabet and $\Box$ a weak shuffle product on $\K\langle X\rangle$. We denote by $T$ the set $T=\{a\in X, f_1(a\ot a)\in\K\setminus\{0,1\}\}$. We assume $T\neq\emptyset$; so $T$ is a singleton $\{a\}$.
	Let $\Box'$ be the weak shuffle product defined by 
	\begin{itemize}
		\item $f_1^{'}(u\ot v)=f_1(u\ot u)$ for any $u\ot v\in X\ot X\setminus\{a\ot a\}$,
		\item $f_1^{'}(a\ot a)=1$ and $f_2^{'}(a\ot a)=1$.
	\end{itemize}
	Then, there exists an algebra isomorphism between  $(\K\langle X\rangle,\Box)$ and  $(\K\langle X\rangle,\Box')$.
\end{prop}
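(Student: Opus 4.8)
The plan is to realise the isomorphism as a diagonal rescaling of the word basis, the scalar attached to a word being a product of factorials indexed by the maximal runs of the special letter $a$. First I would pin down the shape of $\Box$. Since $a\in T$ means $f_1(a\ot a)\notin\{0,1\}$, Theorem \ref{carctwshuffle} forces item \ref{reltroisdeux}: we have $f_2(a\ot a)=1-f_1(a\ot a)$, and $f_1(a\ot b)=1$, $f_1(b\ot a)=0$, hence (by relation \ref{relun}) $f_2(b\ot a)=1$ and $f_2(a\ot b)=0$, for every $b\neq a$. Thus $a$ is absorbing: it is pulled to the front whenever it meets a different letter. Writing any word uniquely as $a^{k}\tilde u$ with $\tilde u\in X^*\setminus aX^*$, I would establish by induction on lengths the two factoring formulas
\[
a^{k}\tilde u\Box a^{l}\tilde v=a^{k+l}(\tilde u\Box\tilde v),\qquad
a^{k}\tilde u\Box' a^{l}\tilde v=\binom{k+l}{k}\,a^{k+l}(\tilde u\Box'\tilde v),
\]
valid for all $k,l\geq0$ and all $\tilde u,\tilde v$ not starting with $a$. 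The first holds because the weights $f_1(a\ot a)=\alpha$ and $f_2(a\ot a)=1-\alpha$ sum to $1$ at each peeling step (which is precisely why, by Corollary \ref{corcarac}, $\Box$ does not depend on $\alpha$); the second is a Pascal-rule induction using $f_1'(a\ot a)=f_2'(a\ot a)=1$. The recurring structural point is that $f_3\equiv0$ forces every word occurring in $\tilde u\Box\tilde v$ (resp. $\tilde u\Box'\tilde v$) to begin with a letter $\neq a$, so that the leading $a$-run of $a^{k+l}(\tilde u\Box\tilde v)$ has length exactly $k+l$.

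Next I would define the candidate map. For a word $w=a^{k_0}c_1a^{k_1}\cdots c_m a^{k_m}$ in canonical form ($c_i\neq a$, $k_i\geq0$), set $\Phi(w)=\big(\prod_{i=0}^m k_i!\big)\,w$ and extend linearly. Because $\K$ has characteristic $0$, each scalar $\prod k_i!$ is nonzero, so $\Phi$ is a graded linear bijection fixing $1$, with inverse dividing each word by the same factor. Equivalently, $\Phi$ is characterised by the peeling rules $\Phi(cw)=c\,\Phi(w)$ for $c\neq a$ and $\Phi(a^{k}\tilde u)=k!\,a^{k}\Phi(\tilde u)$ for $\tilde u\in X^*\setminus aX^*$.

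It then remains to check $\Phi(u\Box v)=\Phi(u)\Box'\Phi(v)$, which I would prove by induction on $\length(u)+\length(v)$. When $u$ and $v$ both start with letters $\neq a$, the claim reduces directly to the defining recursions of the two products (which coincide off $a\ot a$), the induction hypothesis, and the rule $\Phi(cw)=c\,\Phi(w)$. When $u=a^{k}\tilde u$, $v=a^{l}\tilde v$ with $k+l\geq1$, I would invoke the factoring formulas: on one side $\Phi(u\Box v)=(k+l)!\,a^{k+l}\Phi(\tilde u\Box\tilde v)$, while on the other $\Phi(u)\Box'\Phi(v)=k!\,l!\,\big(a^{k}\Phi(\tilde u)\Box' a^{l}\Phi(\tilde v)\big)=k!\,l!\binom{k+l}{k}a^{k+l}\big(\Phi(\tilde u)\Box'\Phi(\tilde v)\big)$, and the two agree by the induction hypothesis together with the identity $k!\,l!\binom{k+l}{k}=(k+l)!$. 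A bijective algebra morphism, $\Phi$ is then the desired isomorphism.

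I expect the main obstacle to lie in the factoring formulas and, within the inductive step, in verifying that the product creates no spurious run of $a$'s: one must use $f_3\equiv0$ and the absorbing identities $f_1(a\ot b)=1$, $f_1(b\ot a)=0$ to guarantee that the factor $\tilde u\Box\tilde v$ never begins with $a$, so that $\Phi$ hits $a^{k+l}(\tilde u\Box\tilde v)$ with the single clean factor $(k+l)!$ and the internal $a$-runs are untouched. Once this bookkeeping is in place, the whole matching rests on the elementary binomial identity above.
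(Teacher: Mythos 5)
Your proof is correct and follows the same strategy as the paper's: a diagonal rescaling of the word basis by factorials attached to runs of the special letter $a$, justified by the factoring formulas $a^{k}\tilde u\Box a^{l}\tilde v=a^{k+l}(\tilde u\Box\tilde v)$ and $a^{k}\tilde u\Box' a^{l}\tilde v=\binom{k+l}{k}a^{k+l}(\tilde u\Box'\tilde v)$ together with the observation that $\Box$ and $\Box'$ coincide once the leading $a$-runs are stripped. The only differences are cosmetic: the paper's map rescales only by the \emph{leading} run, sending $a^{n}w_1\mapsto\frac{1}{n!}\,a^{n}w_1$ for $w_1\notin aX^*$, whereas your extra factors $k_i!$ for the internal runs are superfluous but harmless --- precisely because the absorbing identities $f_1(a\ot b)=1$, $f_1(b\ot a)=0$ prevent any subproduct $au'\Box av'$ from arising after the leading runs are peeled, so internal runs never merge and their factorials cancel identically on the two sides, as you note. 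Incidentally, your normalization (multiplying by $(k+l)!$) is the one that actually satisfies $\Phi(u\Box v)=\Phi(u)\Box'\Phi(v)$ in the direction $(\K\langle X\rangle,\Box)\to(\K\langle X\rangle,\Box')$; the paper's $\frac{1}{n!}$ is the inverse of this morphism, i.e.\ a morphism in the opposite direction, so your version is if anything the cleaner statement.
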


\begin{proof}
	Thanks to Corollary \ref{corcarac}, we know that the weak shuffle $\Box$ does not depend on the value of $f_1(a\ot a)$. We define $\psi:(\K\langle X\rangle,\Box)\rightarrow (\K\langle X\rangle,\Box^{'})$ by:
	\[\psi(w)=\begin{cases}
		w & \mbox{ if } w\notin aX^*,\\
		\frac{1}{n!}w & \mbox{ if } w=\underbrace{a\dots a}_{n \text{ times}}w_1 \text{ with } w_1\notin aX^*.
	\end{cases}\]
	
	Since $f_1(a\ot b)=1$ and $f_1(b\ot a)=0$ for any $b\in X\setminus \{a\}$, the linear map $\psi$ is an algebra morphism. It is trivially an isomorphism.
\end{proof}

\begin{prop}\label{valzeroun}
	Let $\K$ be a field of characteristic 0, let $X$ be an alphabet of cardinality 2 or 3 and let $\Box$ be a weak shuffle product on $\K\langle X\rangle$.
Let $\Box'$ be the weak shuffle product defined by 
\begin{itemize}
	\item $f_1^{'}(a\ot b)=1$ and $f_1^{'}(b\ot a)=0$ for any $(a\ot b)\in X\ot X$ such that $a\neq b$ and $f_1(a\ot b)\notin\{0,1\}$.
	\item $f_1^{'}(a\ot b)=f_1(a\ot b)$  for any $(a\ot b)\in X\ot X$ such that $a\neq b$ and $f_1(a\ot b)\in\{0,1\}$.
	\item $f_i^{'}(a\ot a)=f_i(a\ot a)$ for any $a\in X$ and any $i\in\{1,2\}$.
\end{itemize}
Then, there exists an algebra isomorphism between  $(\K\langle X\rangle,\Box)$ and  $(\K\langle X\rangle,\Box')$.
\end{prop}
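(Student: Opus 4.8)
My plan follows the pattern of Proposition~\ref{propisom}: first use the characterisation to rigidify the situation, then exhibit an explicit diagonal rescaling $\phi(w)=\lambda(w)\,w$ and check it is multiplicative. I may of course assume there is at least one off-diagonal value outside $\{0,1\}$, otherwise $\Box'=\Box$ and $\phi=\mathrm{id}$.

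\textbf{Step 1 (what a forbidden value forces).} Suppose $a\neq b$ and $\beta:=f_1(a\ot b)\notin\{0,1\}$. The letter $a$ cannot satisfy alternative~\ref{reltroisdeux} of Theorem~\ref{carctwshuffle} (that would give $f_1(a\ot b)=1$), so it satisfies~\ref{reltroisun}; condition (ii) there, $f_1(a\ot a)f_1(a\ot b)[f_1(a\ot b)-1]=0$, together with $\beta(\beta-1)\neq0$ forces $f_1(a\ot a)=f_2(a\ot a)=0$, and then condition (i) gives $f_1(b\ot a)=0$. Since $f_1(b\ot a)=0\neq1$ the letter $b$ also falls under~\ref{reltroisun}, and condition (iii) applied with the roles of $a,b$ exchanged, $f_1(b\ot b)f_1(a\ot b)[f_1(a\ot b)-1]=0$, forces $f_1(b\ot b)=f_2(b\ot b)=0$ as well. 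Two consequences: for each unordered pair at most one of $f_1(a\ot b),f_1(b\ot a)$ leaves $\{0,1\}$, so the recipe defining $\Box'$ is unambiguous; and since $\Box'$ only replaces such a $\beta$ by $1$ over a pair of zero-diagonal letters (with $f_2'$ fixed by relation~\ref{relun}), re-reading relations~\ref{relun}--\ref{relquatre} shows $\Box'$ is again a weak shuffle product.

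\textbf{Step 2 (constraining the third letter).} For $|X|=3$ write $X=\{a,b,c\}$ with $(a,b)$ forbidden as above. Here relation~\ref{relquatre} is decisive: instantiated at the triples built from $\{a,b,c\}$ it yields $f_1(a\ot c)f_1(c\ot b)=0$, $f_1(b\ot c)f_1(c\ot a)=0$, $f_1(b\ot c)[f_1(a\ot c)-1]=0$ and $f_1(c\ot a)[f_1(c\ot b)-1]=0$, which leave only a short list of admissible interaction patterns of $c$ with the pair $(a,b)$. These identities are exactly what rules out the ``two-sided'' situations in which, while computing a product, a letter and its partner could both be emitted from the two factors; concretely they guarantee that the emitted letter at each head-comparison is forced, with branching occurring only over pairs on which $\Box$ and $\Box'$ already agree. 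Because $\beta$ (equivalently $f_2(b\ot a)$) is consulted only at the head-pair $(a,b)$, one gets $u\Box v=0\iff u\Box' v=0$, and for each surviving monomial the $\Box$-coefficient is the $\Box'$-coefficient times a power of $\beta$.

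\textbf{Step 3 (the rescaling and the induction), and the obstacle.} In each admissible pattern I would set $\lambda(w)=\beta^{-m(w)}$ with $m(w)$ counting the ``effective'' occurrences of an $a$ to the left of a $b$ in $w$, namely those for which every letter strictly between them is one that can legally be emitted while that $b$ is waiting at a head; then $\phi(w)=\lambda(w)\,w$ is a linear bijection (each $\lambda(w)\in\K^{*}$) and I would prove $\phi(u\Box v)=\phi(u)\Box'\phi(v)$ by induction on $\length(u)+\length(v)$, the two products differing only at the head-pair $(a,b)$ where the extra factor $\beta$ is absorbed by the change of $m$ under prepending the leading $a$. The real difficulty, and the reason for restricting to $|X|\le 3$, is pinning down $m$: the $\beta$-exponent in $u\Box v$ is \emph{not} a single universal word statistic. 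Taking $m(w)$ to be the number of $a$'s immediately followed (run-wise) by a $b$ already settles $|X|=2$, but fails once a third letter can slip between an $a$ and its $b$; replacing it by the global count of $a$'s having some $b$ to their right repairs that case yet over-counts in the pattern where two forbidden pairs share the letter $a$ (a one-line product such as $a^2\Box bac$ versus $a^2\Box b a c$ exhibits each failure). Hence the correct $m$ genuinely depends on which letters ``beat $b$'', information provided by relation~\ref{relquatre}, and the verification must be carried out in each of the finitely many admissible patterns, the cardinality-$2$ case serving as the clean prototype and the cardinality-$3$ case being the substance of the argument.
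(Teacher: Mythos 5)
Your Steps 1 and 2 reproduce the paper's reductions correctly: a value $f_1(a\ot b)=\beta\notin\{0,1\}$ does force $f_1(b\ot a)=f_1(a\ot a)=f_2(a\ot a)=f_1(b\ot b)=f_2(b\ot b)=0$, and relation \ref{relquatre} (together with the diagonal relations) does cut the possible interactions of a third letter $c$ down to a short list --- the paper enumerates ten admissible patterns. The problem is Step~3, which is where the actual content of the proposition lives and which you leave unfinished by your own admission: you never write down the scaling exponent $m(w)$, you observe that your two candidate statistics each fail in some admissible pattern, and you end by saying the verification ``must be carried out in each of the finitely many admissible patterns'' without carrying it out. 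That is precisely the part that cannot be waved at: as you yourself note, the coefficient of a word in $u\Box v$ is a sum over shuffle paths, and one must check, pattern by pattern, that all paths contributing to a given word cross the head-pair $(a,b)$ the same number of times --- otherwise the $\Box$-coefficient is not a single power of $\beta$ times the $\Box'$-coefficient and no diagonal rescaling $w\mapsto\lambda(w)w$ can be multiplicative.

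The paper resolves this with two explicit maps. In the patterns where $f_1(c\ot b)=0$ (so $c$ cannot be emitted while a $b$-headed word waits) it uses $\varphi_1(w)=k^{-n}w$ for $w=a^nw'$ with $w'\in bX^*$ and $\varphi_1(w)=w$ otherwise; in the patterns where $f_1(c\ot b)\neq0$, so that $c$'s can slip between the $a$'s and the first $b$, it uses $\varphi_2$, which divides by $k^{n_1+\dots+n_s}$ where the $n_i$ are the sizes of all the $a$-blocks in the maximal $\{a,c\}$-prefix preceding a factor in $bX^*$. Your closing remark that the correct statistic ``depends on which letters beat $b$'' is exactly this dichotomy, so your plan is the paper's plan; but to count as a proof you would need to state these two maps (or equivalents), assign each of the ten patterns to one of them, check multiplicativity in each, and then perform the final iteration the paper also needs, namely renormalising a remaining off-diagonal value such as $f_1(b\ot c)$ or $f_1(c\ot b)$ equal to some $p\notin\{0,1\}$ by a second application of the same procedure.
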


\begin{proof}
	If $X=\{a,b\}$ then there is an one-parameter family of weak shuffle products $\Box$ such that $f_1(a\ot b)\notin \{0,1\}$. They are defined by $f_1(a\ot b)=k\in\K\setminus\{0,1\}$ and $f_1(b\ot a)=f_1(a\ot a)=f_2(a\ot a)=f_1(b\ot b)=f_2(b\ot b)=0$. We define $\Box^{'}$ by changing $k$ in 1. 
	 The map $\varphi$ defined by \[\varphi(w)=\begin{cases}
	\frac{1}{k^n}w & \mbox{ if } w=\underbrace{a\dots a}_{n \text{ times}}w{'} \text{ with } w{'}\in bX^*,\\
		w &\mbox{ else,}
	\end{cases}\] is an algebra isomorphism between  $(\K\langle X\rangle,\Box)$ and  $(\K\langle X\rangle,\Box')$ 
	
	Let us now consider the case $X=\{a,b,c\}$. Without loss of generality we assume $f_1(a\ot b)=k\in\K\setminus\{0,1\}$. The charactarisation of weak shuffle products given in Theorem \ref{carctwshuffle} leads to the following relations:
	\begin{itemize}
		\item $f_1(b\ot a)=f_1(a\ot a)=f_2(a\ot a)=f_1(b\ot b)=f_2(b\ot b)=0$,
		\item $f_1(a\ot c)f_1(c\ot a)=0$,
		\item $f_1(b\ot c)f_1(c\ot b)=0$,
		\item $f_1(a\ot c)f_1(c\ot b)=0$,
		\item $f_1(b\ot c)f_1(c\ot a)=0$,
		\item $f_1(u\ot v)f_1(v\ot w)[f_1(u\ot w)-1]=0$ where $\{u,v,w\}=X$.
	\end{itemize}
	Thus, the weak shuffle product $\Box$ is one of the following:
	\begin{enumerate}
		\item $f_1(a\ot c)=f_1(b\ot c)=f_1(c\ot a)=f_1(c\ot b)=0$ and $f_1(c\ot c)=f_2(c \ot c)\in\{0,1\}$. \label{casun}
		\item $f_1(a\ot c)=1$, $f_1(b\ot c)=p\in\K^*$ and $f_1(c\ot a)=f_1(c\ot b)=f_1(c\ot c)=f_2(c \ot c)=0$,
		\item $f_1(a\ot c)=1$, $f_1(b\ot c)=1$, $f_1(c\ot a)=f_1(c\ot b)=0$ and $f_1(c\ot c)=f_2(c \ot c)=1$, \label{fincasdeux}
		\item $f_1(a\ot c)=f_1(b\ot c)=0$, $f_1(c\ot a)=p\in\K^*$, $f_1(c\ot b)=1$ and $f_1(c\ot c)=f_2(c \ot c)=0$, \label{debutcastrois}
		\item $f_1(a\ot c)=f_1(b\ot c)=0$, $f_1(c\ot a)=1$, $f_1(c\ot b)=1$ and $f_1(c\ot c)=f_2(c \ot c)=1$,
		\item $f_1(a\ot c)=f_1(b\ot c)=0$, $f_1(c\ot a)=1$, $f_1(c\ot b)=1$ and $f_1(c\ot c)=1-f_2(c \ot c)$,
		\item $f_1(a\ot c)=f_1(b\ot c)=f_1(c\ot a)=0$, $f_1(c\ot b)=p\in\K^*$ and $f_1(c\ot c)=f_2(c \ot c)=0$,
		\item $f_1(a\ot c)=f_1(b\ot c)=f_1(c\ot a)=0$, $f_1(c\ot b)=1$ and $f_1(c\ot c)=f_2(c \ot c)=1$, \label{fincasquatre}
		\item $f_1(a\ot c)=p\in\K^*$, $f_1(b\ot c)=f_1(c\ot a)=f_1(c\ot b)=0$ and $f_1(c\ot c)=f_2(c \ot c)=0$, \label{debutcascinq}
		\item $f_1(a\ot c)=1$, $f_1(b\ot c)=f_1(c\ot a)=f_1(c\ot b)=0$ and $f_1(c\ot c)=f_2(c \ot c)=1$, \label{fincascinq}
	\end{enumerate}
	We define $\Box^{'}$ by $f_1^{'}(a\ot b)=1$ and $f_1^{'}(u\ot v)=f_1(u\ot v)$ if $u\ot v\neq a\ot b$.
	Let $\varphi_1$ and $\varphi_2$ be the maps defined by: for any word $w$,
	\[\varphi_1(w)=\begin{cases}
	\frac{1}{k^n}w & \mbox{ if } w=\underbrace{a\dots a}_{n \text{ times}}w^{'} \text{ with } w^{'}\in bX^*,\\
	w & \mbox{ else,} 
	\end{cases}\]
	
	and 
	
	\[\varphi_2(w)=\begin{cases}
	\frac{1}{k^{n_1+\dots + n_s}}w & \mbox{ if } w=\underbrace{c\dots c}_{q_1 \text{ times}}\underbrace{a\dots a}_{n_1 \text{ times}}\underbrace{c\dots c}_{q_2 \text{ times}}\dots \underbrace{c\dots c}_{q_s \text{ times}}\underbrace{a\dots a}_{n_s \text{ times}}\underbrace{c\dots c}_{q_{s+1} \text{ times}}w^{'} \text{ with } w^{'}\in bX^*\\
	& \mbox{ and } (q_1,\dots,q_{s+1})\in\N^{s+1},\\
	w & \mbox{ else.} 
	\end{cases}\]
	
	From case \ref{casun} to case \ref{fincasdeux} and from case \ref{debutcascinq} to case \ref{fincascinq} the map $\varphi_1$ is an algebra isomorphism between $(\K\langle X\rangle,\Box)$ and  $(\K\langle X\rangle,\Box')$. From case \ref{debutcastrois} to case \ref{fincasquatre}
	the map $\varphi_2$ is an algebra isomorphism between $(\K\langle X\rangle,\Box)$ and  $(\K\langle X\rangle,\Box')$.
	
	If maps $f_1^{'}$ and $f_2^{'}$ do not take their values in $\{0,1\}$ we apply the previous process once again to $\Box^{'}$. And then, we find a weak shuffle product $\Box^{''}$ such that $f_1{''}(u\ot v), f_2{''}(u\ot v)\in\{0,1\}$ for any $(u\ot v)\in X\ot X$.  
\end{proof}

\begin{conj}\label{conj}
	Proposition \ref{valzeroun} is still true for any countable alphabet.
\end{conj}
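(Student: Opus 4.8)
The natural plan is to mimic the finite-alphabet argument of Proposition~\ref{valzeroun}: look for the isomorphism in the form of a map that is diagonal in the basis of words, $\varphi(w)=\lambda_w\,w$ with $\lambda_w\in\K^{*}$, and let $\Box'$ be obtained from $\Box$ by replacing \emph{every} off-diagonal value $f_1(a\ot b)\notin\{0,1\}$ (with $a\neq b$) by $1$ at once, rather than one pair at a time. The first thing to record is that, for every such "bad" pair, Theorem~\ref{carctwshuffle} already pins down the surrounding data: writing $k=f_1(a\ot b)\notin\{0,1\}$, relation~\ref{reltrois} forces $f_1(a\ot a)=f_2(a\ot a)=0$, $f_1(b\ot b)=f_2(b\ot b)=0$ and $f_1(b\ot a)=0$ (exactly the computation used in the cardinality~$3$ case). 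Consequently, replacing $k$ by $1$ (and, by relation~\ref{relun}, the symmetric value $f_2(b\ot a)$ accordingly) leaves all the defining relations valid, since relations \ref{reltrois} and \ref{relquatre} only involve whether an $f_1$ value is zero, nonzero, or equal to $1$, and none of these statuses changes when a nonzero $k$ becomes $1$; hence $\Box'$ is again a weak shuffle product.

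Next I would make the combinatorial expansion of a weak shuffle product explicit. Iterating the defining recursion, $u\Box v=\sum_{w}c_w(u,v)\,w$, the sum running over the interleavings $w$ of $u$ and $v$, where $c_w(u,v)$ is the product, over the successive merging steps that build $w$ with both factors still nonempty, of the corresponding factor $f_1(x\ot y)$ ($x$ the letter taken, $y$ the front of the other factor). Grouping these factors according to which bad pair they use gives the factorisation $c_w(u,v)=\bigl(\prod_{e}k_e^{\,N^{e}_w(u,v)}\bigr)\,c'_w(u,v)$, where $e$ ranges over bad pairs, $k_e$ is the corresponding value, $N^{e}_w(u,v)$ counts the merging steps that used $e$, and $c'_w$ is the analogous coefficient for $\Box'$; since each $k_e\neq0$ one has $c_w\neq0\iff c'_w\neq0$. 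Setting $\lambda_w=\prod_{e}k_e^{-\mu^{e}_w}$ — a finite product for each fixed $w$, because only the finitely many letters occurring in $w$ can index a relevant $e$ — the morphism identity $\varphi(u\Box v)=\varphi(u)\Box'\varphi(v)$ becomes, after cancelling $c'_w$, the requirement that each integer statistic $\mu^{e}$ satisfy the cocycle relation
\[\mu^{e}_w=\mu^{e}_u+\mu^{e}_v+N^{e}_w(u,v)\]
for every $u,v$ and every surviving interleaving $w$.

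It then remains to produce statistics $\mu^{e}$ solving this cocycle, generalising the exponents appearing in $\varphi_1$ and $\varphi_2$ (which count, respectively, the leading letters equal to $a$, and the letters $a$ occurring before the first $b$). The candidate I would take for a bad pair $e=(a,b)$ is $\mu^{(a,b)}_w=\#\{\text{occurrences of }a\text{ strictly before the first }b\text{ in }w\}$, and $0$ if $w$ contains no $b$. The verification would proceed by induction on $\length(u)+\length(v)$, peeling off the first letters of $u$ and $v$ exactly as in the proof of Theorem~\ref{carctwshuffle}, and using $f_1(b\ot a)=f_1(a\ot a)=f_1(b\ot b)=0$ to control which steps can actually produce the factor $k$; in particular, once a $b$ has been emitted, no later $a$ can still be merged against a $b$-front, which is what should make "before the first $b$" the correct quantity.

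The hard part is precisely this last verification, for two reasons. First, bad pairs need not be isolated: relation~\ref{relquatre} permits chains $f_1(a\ot b),f_1(b\ot c)\notin\{0,1\}$ while forcing $f_1(a\ot c)=1$, so over a countable alphabet the merging dynamics can couple several statistics $\mu^{e}$ simultaneously, and one must check that the per-edge cocycle is not spoiled by intervening letters carrying their own bad edges. Second, the statistic that works for two or three letters must be shown to remain correct \emph{uniformly} — that is, that in every surviving interleaving no occurrence of $a$ lying after the first $b$ ever contributes a factor $k_e$ — for arbitrarily large configurations; this is where I expect the genuine combinatorial difficulty to lie, since the finite cases were handled by ad hoc rescalings ($\varphi_1,\varphi_2$) rather than by a uniform invariant, and Corollary~\ref{corcarac} and Proposition~\ref{propisom} only dispose of the diagonal anomalies, not of these interacting off-diagonal chains. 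Once the cocycle is established the map $\varphi$ is an algebra morphism, and being diagonal with entries in $\K^{*}$ it is automatically bijective, yielding the isomorphism $(\K\langle X\rangle,\Box)\cong(\K\langle X\rangle,\Box')$ and hence the conjecture.
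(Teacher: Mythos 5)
The statement you are proving is labelled a \emph{conjecture} in the paper: the author gives no proof, and the remark immediately following it explains exactly why --- once $\{a,b,c,d\}\subset X$ and $f_1(a\ot b)\notin\{0,1\}$, ``it is hard to anticipate the part of $x$ facing $y$'' for $x,y\notin\{a,b\}$. So there is nothing in the paper to compare your argument against, and your text, read honestly, is a proof \emph{plan} rather than a proof: you yourself flag that the cocycle verification is ``where I expect the genuine combinatorial difficulty to lie,'' and that difficulty is precisely the one the author identifies as open. What you do contribute that is genuinely useful is the observation that the two ad hoc rescalings $\varphi_1$ and $\varphi_2$ of Proposition~\ref{valzeroun} are both instances of a single candidate invariant, $\mu^{(a,b)}_w=\#\{\text{occurrences of }a\text{ before the first }b\text{ in }w\}$, and that the preliminary steps (the forced vanishing $f_1(a\ot a)=f_2(a\ot a)=f_1(b\ot b)=f_2(b\ot b)=f_1(b\ot a)=0$ around a bad pair, and the fact that $\Box'$ is again a weak shuffle product) do go through. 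That is a sensible reduction of the conjecture to a concrete combinatorial identity.

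Beyond the acknowledged gap, there is a second, unacknowledged one in your factorisation $c_w(u,v)=\bigl(\prod_e k_e^{N^e_w(u,v)}\bigr)c'_w(u,v)$. A fixed interleaving $w$ of $u$ and $v$ can in general be produced by several distinct merging histories (whenever $u$ and $v$ share letters whose diagonal values are nonzero), so $c_w(u,v)$ is a \emph{sum} over histories $h$ of $\bigl(\prod_e k_e^{N^e_h}\bigr)t_h$, and your formula presupposes that every surviving history uses each bad edge the same number of times. Around the bad pair itself this is plausible because $f_1(a\ot a)=0$ kills the ambiguous merges of $a$ against $a$, but for interleavings mediated by third letters $x,y$ carrying their own nonzero values --- exactly the ``chains'' permitted by relation~\ref{relquatre} --- you have not shown it, and without it the quantity $N^e_w(u,v)$ is not even well defined. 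So the cocycle relation you write down is not yet a statement you can try to prove by induction; you first need a lemma asserting history-independence of the bad-edge count, or you must work with the full sum over histories. Until both of these points are settled the conjecture remains open.
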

\Rq{1} If $X$ is an alphabet such that $\{a,b,c,d\}\subset X$ and $f_1(a\ot b)\notin\{0,1\}$ then relations 
\begin{enumerate}
	\item $f_1(a\ot x)f_1(x\ot a)=0$,
	\item $f_1(b\ot x)f_1(x\ot b)=0$,
	\item $f_1(a\ot x)f_1(x\ot b)=0$,
	\item $f_1(b\ot x)f_1(x\ot a)=0$,
\end{enumerate}
are still satisfied for any letter $x\in X$. However, if $x,y\in X\setminus\{a,b\}$, even if they satisfy relations given in Theorem \ref{carctwshuffle}, it is hard to anticipate the part of $x$ facing $y$. 

\subsection{Weak shuffle products on $\K\langle\{a,b\}\rangle$} 
Let $X=\{a,b\}$ be an alphabet of cardinality $2$. By using the characterisation given in Theorem \ref{carctwshuffle}, there are 10 families of weak shuffle products defined on $\K\langle X\rangle$.
Let $C$ be the $6$-tuple $C=\Big(f_1(a\ot b),f_1(b\ot a), f_1(a\ot a), f_2(a\ot a), f_1(b\ot b), f_2(b\ot b)\Big)$. If $k\in\K^*$ and $\alpha\in\K$ then $C$ is one of the following $6$-tuples
\begin{align*}
C_1=   &(0,0,0,0,0,0),             & C_2=&(k,0,0,0,0,0),            & C_3=&(1,0,1,1,0,0),\\
C_4=   &(1,0,0,0,1,1),             & C_5=&(0,0,1,1,0,0),            & C_6=&(0,0,1,1,1,1),\\
C_7=   &(1,0,\alpha,1-\alpha,0,0), & C_8=&(1,0,\alpha,1-\alpha,1,1),& C_9=&(1,0,1,1,1,1),\\
C_{10}=&(1,1,1,1,1,1).             &     &                          &     &
\end{align*}
For any $n\in\llbracket 1,10\rrbracket$, we denote by $\underset{n}{\Box}$ the weak shuffle product associated to $C_n$. The concatenation of two words  $u$ and $v$ is denoted by  $uv$. The empty word is denoted by $1$.
\begin{description}
	\item[Case $n=2$.] Thanks to Proposition \ref{valzeroun}, for any $k\in\K^*$ the weak shuffle product defined by $C_2$ is isomorphic to the case $(1,0,0,0,0,0)$. Let $u$ and $v$ be two non-empty words. Then 
	\[u\underset{2}{\Box} v=\begin{cases}
	k^nuv &\mbox{ if } (u=\underbrace{a\dots a}_{n \text{ times}} \text{ and } v=bw \text{ with } w\in X^*) \\
	k^nvu &\text{ if } (v=a\dots a \text{ and } u	=bw \text{ with } w\in X^*),\\
	0 & \mbox{ else.}
	\end{cases}\]
	
	\item[Cases $n=3$ and $n=7$.] Thanks to Proposition \ref{propisom} the weak shuffle products defines by $C_3$ and $C_7$ are isomorphic. Let $u$ and $v$ be two non-empty words. Then 
	\[ u\underset{3}{\Box} v=\begin{cases}
	uv &\mbox{ if } (u=a\dots a \text{ and } v=bw \text{ with } w\in X^*) \\
	vu &\text{ if } (v=a\dots a \text{ and } u	=bw \text{ with } w\in X^*),\\
	\displaystyle\binom{k+l}{k}\underbrace{a\dots a}_{k+l \text{ times}}w  &\mbox{ if } (u=\underbrace{a\dots a}_{k \text{ times}} \text{ and } v=\underbrace{a\dots a}_{l \text{ times}}w \text{ with } w\in bX^*\cup\{1\}) \\ &\text{ or } (v=\underbrace{a\dots a}_{k \text{ times}} \text{ and } u=\underbrace{a\dots a}_{l \text{ times}}w \text{ with } w\in bX^*\cup\{1\}),\\
	0 & \mbox{ else,}
	\end{cases}\]
	
	and $u\underset{7}{\Box} v=\begin{cases}
	uv &\mbox{ if } (u=a\dots a \text{ and } v=bw \text{ with } w\in X^*) \\ 
	vu &\text{ if } (v=a\dots a \text{ and } u	=bw \text{ with } w\in X^*),\\
	\underbrace{a\dots a}_{k+l \text{ times}}w  &\mbox{ if } (u=\underbrace{a\dots a}_{k \text{ times}} \text{ and } v=\underbrace{a\dots a}_{l \text{ times}}w \text{ with } w\in bX^*\cup\{1\}) \\ &\text{ or } (v=\underbrace{a\dots a}_{k \text{ times}} \text{ and } u=\underbrace{a\dots a}_{l \text{ times}}w \text{ with } w\in bX^*\cup\{1\}),\\
	0 & \mbox{ else.}
	\end{cases}$
	
	\item[Case $n=5$.] Let $u$ and $v$ be two non-empty words. Then 
	\[ u\underset{5}{\Box} v=\begin{cases}
	\displaystyle\binom{k+l-1}{k}\underbrace{a\dots a}_{k+l \text{ times}}w  &\mbox{ if } (u=\underbrace{a\dots a}_{k \text{ times}} \text{ and } v=\underbrace{a\dots a}_{l \text{ times}}w \text{ with } w\in bX^*) \\ &\text{ or } (v=\underbrace{a\dots a}_{k \text{ times}} \text{ and } u=\underbrace{a\dots a}_{l \text{ times}}w \text{ with } w\in bX^*),\\
	\displaystyle\binom{k+l}{k}\underbrace{a\dots a}_{k+l \text{ times}}  &\mbox{ if } u=\underbrace{a\dots a}_{k \text{ times}} \text{ and } v=\underbrace{a\dots a}_{l \text{ times}},\\
	0 & \mbox{ else.}
	\end{cases}\]
	
	\item[Case $n=6$.] Let $u$ and $v$ be two non-empty words. Then 
	\[ u\underset{6}{\Box} v=\begin{cases}
	\displaystyle\binom{k+l-1}{k}\underbrace{a\dots a}_{k+l \text{ times}}w  &\mbox{ if } (u=\underbrace{a\dots a}_{k \text{ times}} \text{ and } v=\underbrace{a\dots a}_{l \text{ times}}w \text{ with } w\in bX^*) \\ &\text{ or } (v=\underbrace{a\dots a}_{k \text{ times}} \text{ and } u=\underbrace{a\dots a}_{l \text{ times}}w \text{ with } w\in bX^*),\\
	\displaystyle\binom{k+l}{k}\underbrace{a\dots a}_{k+l \text{ times}}  &\mbox{ if } u=\underbrace{a\dots a}_{k \text{ times}} \text{ and } v=\underbrace{a\dots a}_{l \text{ times}},\\
	\displaystyle\binom{k+l-1}{k}\underbrace{b\dots b}_{k+l \text{ times}}w  &\mbox{ if } (u=\underbrace{b\dots b}_{k \text{ times}} \text{ and } v=\underbrace{b\dots b}_{l \text{ times}}w \text{ with } w\in aX^*) \\ &\text{ or } (v=\underbrace{b\dots b}_{k \text{ times}} \text{ and } u=\underbrace{b\dots b}_{l \text{ times}}w \text{ with } w\in aX^*),\\
	\displaystyle\binom{k+l}{k}\underbrace{b\dots b}_{k+l \text{ times}}  &\mbox{ if } u=\underbrace{b\dots b}_{k \text{ times}} \text{ and } v=\underbrace{b\dots b}_{l \text{ times}},\\
	0 & \mbox{ else.}
	\end{cases}\]
	\item[Case $n=4$.] First, it is natural to ask whether or not this case is isomorphic to the case with $n=3$? In fact, not. A counter-example is given by the elements $u$ of degree 2 such that $u^2=0$. Indeed, 
	\begin{enumerate}
		\item with the case $n=4$, if $u=\lambda aa+\mu bb+\sigma ab +\tau ba$ then 
		\begin{align*}
			u^2=&6\mu^2bbbb+2\tau^2baba+2\lambda\mu aabb+2\lambda\tau aaba+6\mu\sigma abbb \\
			+& 2\mu\tau (babb+bbab+bbba) + 2\sigma\tau (abab+abba). 
		\end{align*}
		So $u^2=0 \iff \mu=\tau=0$ and $\bigg\{u\in \K\langle\{a,b\}^*\rangle, \length(u)=2 \text{ and } u^2=0 \bigg\}=\Span(aa, ab)$.
		\item with the case $n=3$, if $u=\lambda aa+\mu bb+\sigma ab +\tau ba$ then 
		\begin{align*}
		u^2=&6\lambda^2aaaa+2\lambda\mu aabb+6\lambda\sigma aaab+2\lambda\tau aaba. 
		\end{align*}
		So $u^2=0 \iff \lambda=0$  and $\bigg\{u\in \K\langle\{a,b\}^*\rangle, \length(u)=2 \text{ and } u^2=0 \bigg\}=\Span(bb, ab, ba)$.
	\end{enumerate}
	
	Let $u$ and $v$ be two non-empty words. Then 
	\begin{enumerate}
		\item If $u=\underbrace{a \dots a}_{m \text{ times}}u^{'}$  and $u{'},v\in bX^{*}\cup\{1\}$ then \[u\underset{4}{\Box} v=v\underset{4}{\Box} u=\underbrace{a \dots a}_{m \text{ times}}(u^{'}\underset{4}{\Box}v).\]
		\item If $ u=\underbrace{b\dots b}_{m_1 \text{ times}}u^{'}$, $v=\underbrace{b\dots b}_{m_2 \text{ times}}v^{'}$ and $ u^{'},v^{'}\in aX^{*}\cup\{1\}$ then 
		\begin{align*}
		 u\underset{4}{\Box}v=&\displaystyle\sum_{k=0}^{m_2-1}\binom{m_1+k-1}{k}\underbrace{b\dots b}_{m_1+k \text{ times}}(u^{'}\underset{4}{\Box}\underbrace{b\dots b}_{m_2-k \text{ times}}w^{'})\\
		 &+\displaystyle\sum_{k=0}^{m_1-1}\binom{m_2+k-1}{k}\underbrace{b\dots b}_{m_2+k \text{ times}}(\underbrace{b\dots b}_{m_1-k \text{ times}}u^{'}\underset{4}{\Box}v^{'})\\
		 =&v\underset{4}{\Box}u
		\end{align*}
		\item If $u,v\in aX^{*}$ then $u\underset{4}{\Box}v=v\underset{4}{\Box}u=0$.
	\end{enumerate}
	
	\item[Cases $n=8$ and $n=9$.] We recall that the case $n=8$ does not depend on $\alpha\in\K$. Thanks to Proposition \ref{propisom}  the weak shuffle products defined by $C_8$ and $C_9$ are isomorphic. Let $u$ and $v$ be two non-empty words. Then 
	\begin{enumerate}
		\item If $u=\underbrace{a \dots a}_{m \text{ times}}u^{'}$  and $u{'},v\in bX^{*}\cup\{1\}$ then \[u\underset{9}{\Box} v=v\underset{9}{\Box} u=\underbrace{a \dots a}_{m \text{ times}}(u^{'}\underset{9}{\Box}v)=u\underset{8}{\Box} v=v\underset{8}{\Box} u.\]
		\item If $ u=\underbrace{b\dots b}_{m_1 \text{ times}}u^{'}$, $v=\underbrace{b\dots b}_{m_2 \text{ times}}v^{'}$ and $ u^{'},v^{'}\in aX^{*}\cup\{1\}$ then 
		\begin{align*}
		u\underset{9}{\Box}v=&\displaystyle\sum_{k=0}^{m_2-1}\binom{m_1+k-1}{k}\underbrace{b\dots b}_{m_1+k \text{ times}}(u^{'}\underset{9}{\Box}\underbrace{b\dots b}_{m_2-k \text{ times}}w^{'})\\
		&+\displaystyle\sum_{k=0}^{m_1-1}\binom{m_2+k-1}{k}\underbrace{b\dots b}_{m_2+k \text{ times}}(\underbrace{b\dots b}_{m_1-k \text{ times}}u^{'}\underset{9}{\Box}v^{'})\\
		=&v\underset{9}{\Box}u=u\underset{8}{\Box}v=v\underset{8}{\Box}u.
		\end{align*}
		\item If $u=\underbrace{a\dots a}_{k \text{ times}}u^{'}$, $v=\underbrace{a\dots a}_{l \text{ times}}v^{'}$  and $u^{'},v^{'}\in bX^{*}\cup\{1\}$ then \[u\underset{9}{\Box}v=v\underset{9}{\Box}u=\binom{k+l}{k}\underbrace{a\dots a}_{k+l \text{ times}}(u^{'}\underset{9}{\Box}v^{'}),\] and
		\[u\underset{8}{\Box}v=v\underset{8}{\Box}u=\underbrace{a\dots a}_{k+l \text{ times}}(u^{'}\underset{8}{\Box}v^{'}).\]
	\end{enumerate}
	From the previous calculations, we have the following consequence:
	\begin{cor}
		Let $v$ and $w$ be two words. Then $v\underset{9}{\Box} w\neq0$.
	\end{cor}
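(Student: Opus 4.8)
The plan is to prove a strengthening that makes the nonvanishing transparent: namely that for any two words $v$ and $w$, the element $v\underset{9}{\Box} w$ is a linear combination of words with \emph{nonnegative} coefficients, not all zero. Since distinct basis words with nonnegative coefficients (at least one positive) cannot cancel in characteristic $0$, this at once gives $v\underset{9}{\Box} w\neq 0$.

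First I would record the recursion obeyed by $\underset{9}{\Box}$. As $C_9=(1,0,1,1,1,1)$, relation \ref{relun} gives $f_2(a\ot a)=f_1(a\ot a)=1$, $f_2(b\ot b)=f_1(b\ot b)=1$, $f_2(a\ot b)=f_1(b\ot a)=0$ and $f_2(b\ot a)=f_1(a\ot b)=1$, while $f_3\equiv 0$. Hence on nonempty words
\begin{align*}
au\underset{9}{\Box} av&=a(u\underset{9}{\Box} av)+a(au\underset{9}{\Box} v),\\
au\underset{9}{\Box} bv&=a(u\underset{9}{\Box} bv),\\
bu\underset{9}{\Box} av&=a(bu\underset{9}{\Box} v),\\
bu\underset{9}{\Box} bv&=b(u\underset{9}{\Box} bv)+b(bu\underset{9}{\Box} v),
\end{align*}
together with $u\underset{9}{\Box} 1=1\underset{9}{\Box} u=u$. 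The decisive feature is that every coefficient on the right-hand sides equals $0$ or $1$: no subtraction ever occurs.

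Then I would argue by induction on $\length(v)+\length(w)$. Let $\mathcal{C}$ be the set of nonzero elements of $\K\langle X\rangle$ whose coordinates in the word basis are all nonnegative. The base case is immediate, since if $v$ or $w$ equals $1$ the product is the other word, which lies in $\mathcal{C}$. For the inductive step I would write $v$ and $w$ via their first letters and apply the matching line of the recursion. Every right-hand side is a sum of terms of the form $x\cdot(v'\underset{9}{\Box} w')$ with $x$ a letter and $\length(v')+\length(w')<\length(v)+\length(w)$; by the induction hypothesis each factor $v'\underset{9}{\Box} w'$ lies in $\mathcal{C}$, and prefixing every word by a fixed letter $x$ maps $\mathcal{C}$ into $\mathcal{C}$. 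Summing elements of $\mathcal{C}$ stays in $\mathcal{C}$ because all coordinates remain nonnegative and at least one stays positive, so $v\underset{9}{\Box} w\in\mathcal{C}$ and in particular is nonzero.

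The only delicate point, which I expect to be the main (and mild) obstacle, is the absence of cancellation when two terms are added. In the lines $au\underset{9}{\Box} av=a[(u\underset{9}{\Box} av)+(au\underset{9}{\Box} v)]$ and $bu\underset{9}{\Box} bv=b[(u\underset{9}{\Box} bv)+(bu\underset{9}{\Box} v)]$ both summands start with the same letter, so one must check that the prefixed words do not annihilate one another; positivity handles this, since a sum of two elements of $\mathcal{C}$ has only nonnegative coordinates and is therefore nonzero. In the mixed lines the two contributions begin with different letters and hence involve disjoint basis words, so again cancellation is impossible. This closes the induction and yields the Corollary.
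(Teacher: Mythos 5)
Your proof is correct, and it rests on the same observation as the paper's: for $C_9$ every branch of the recursion carries a coefficient in $\{0,1\}$ with at least one branch always active, so no cancellation can occur. The paper simply reads this off the explicit closed-form expressions for $u\underset{9}{\Box}v$ computed just before the corollary (which exhibit positive binomial coefficients), whereas you package the same positivity fact as a clean induction on $\length(v)+\length(w)$; both are valid and essentially equivalent.
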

	
	\Rq{1} For cases $n\in\{4,8,9\}$, since $f_1(a\ot b)=1$ and $f_1(b\ot a)=0$, the calculation of $ u\underset{n}{\Box} v$ where $u=\underbrace{b\dots b}_{m_1 \text{ times}}u^{'}$, $v=\underbrace{b\dots b}_{m_2 \text{ times}}v^{'}$ and  $u^{'},v^{'}\in aX^{*}\cup\{1\}$ does not depend on the values of $f_1(a\ot a)$ nor $f_2(a\ot a)$. We give the value of $u\underset{4}{\Box}v(=u\underset{8}{\Box}v=u\underset{9}{\Box}v)$ for some example couple $(u,v)\in(bX^{*})^2$. For some examples of pairs $(x,p)\in X\times\N^*$, to lighten the notation, we write $x^p$ instead of $\underbrace{x \dots x}_{p \text{ times}}$.
	
	Let $(m,s,p,r)$ be a quadruple of positive integers. Then:
	\[b^ma^s\underset{4}{\Box}b^pa^r=\displaystyle\sum_{k=0}^{p-1}\binom{m+k-1}{k}b^{m+k}a^sb^{p-k}a^r+\sum_{k=0}^{m-1}\binom{p+k-1}{k}b^{p+k}a^rb^{m-k}a^s.\]
	Let $(m,s,p,r,t)$ be a quintuple of positive integers such that $m\geq 2$. Then:
	\begin{align*}
	b^ma^s\underset{4}{\Box}b^pa^rb^t=&\sum_{k=0}^{p-1}\binom{m+k-1}{k}b^{m+k}a^sb^{p-k}a^rb^t+\displaystyle\sum_{k=0}^{t}\binom{m+k-1}{k}b^pa^rb^{m+k}a^sb^{t-k}\\
	&+\sum_{\substack{f+g= m\\f\in\N^*\\g\in\N^*}}\displaystyle\sum_{k=0}^{t}\binom{f+p-1}{f}\binom{g+k-1}{k}b^{p+f}a^rb^{g+k}a^sb^{t-k}.
	\end{align*}
\end{description}

\begin{prop}\label{Cneuf}
	Let $\underset{9}{\Box}$ be the weak shuffle product defined by $C_{9}$. Let $p$ be a positive integer and $n\in\{1,2,3\}$. We denote by $K_{(n,p)}$ the set 
	\[K_{(n,p)}=\Bigg\{u=\sum_{\substack{w\in X^{*}\\ \length(w)=n}}\lambda_w w, u^p=0\Bigg\}.\]
	Then, $K_{(n,p)}=\{0\}$.
\end{prop}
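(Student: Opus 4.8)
The plan is to combine three ingredients coming from the description of $\underset{9}{\Box}$ recalled above. First, the bigrading of $\K\langle X\rangle$ by the number of letters $a$ and the number of letters $b$, both of which $\underset{9}{\Box}$ is additive for; I write $|w|_{b}$ for the number of letters $b$ in $w$. Second, the fact that for this product every structure constant is nonnegative, so any $\underset{9}{\Box}$-product of words is a nonnegative integer combination of words; in particular nothing can cancel, a target word occurs with positive coefficient as soon as one admissible interleaving produces it, and a power of a single word is never zero (this is the content of the Corollary $v\underset{9}{\Box}w\neq 0$). Third, the priority rule "$a$ before $b$": the relations $au\underset{9}{\Box}bv=a(u\underset{9}{\Box}bv)$ and $bu\underset{9}{\Box}av=a(bu\underset{9}{\Box}v)$ force an $a$ to be emitted whenever one is available at a front, while equal leading letters give the classical shuffle step.

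I would begin with a reduction killing the extreme monomials. Decompose $u=\sum_{j}u_{j}$, where $u_{j}$ collects the words $w$ with $|w|_{b}=j$. Since $\underset{9}{\Box}$ adds $b$-degrees, the homogeneous component of $u^{p}$ of $b$-degree $0$ equals $(u_{0})^{\underset{9}{\Box}p}$ and that of $b$-degree $np$ equals $(u_{n})^{\underset{9}{\Box}p}$. In degree $n$ there is a unique word with no $b$, namely $a^{n}$, and a unique word with only $b$'s, namely $b^{n}$, so $u_{0}=\lambda\, a^{n}$ and $u_{n}=\mu\, b^{n}$; as $(a^{n})^{\underset{9}{\Box}p}\neq 0$ and $(b^{n})^{\underset{9}{\Box}p}\neq 0$, the hypothesis $u^{p}=0$ gives $\lambda=\mu=0$. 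This already settles $n=1$ and removes the coefficients of $a^{n}$ and $b^{n}$ in general.

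It then remains, inside each fixed intermediate bidegree, to separate the several surviving words, and the tool is the priority rule through two criteria. A target word of sorted form $a^{r}b^{s}$ is produced only by selections all of whose factors are themselves of sorted form $a^{\ast}b^{\ast}$, since a factor carrying a $b$ before an $a$ cannot embed order-preservingly into $a^{r}b^{s}$; and a target beginning with $b$ is produced only by selections all of whose factors begin with $b$, since otherwise an $a$ is forced first. I apply this by peeling off monomials one at a time and finishing, once a single word remains, by the nonvanishing of its powers. For $n=2$ the only remaining class is bidegree $(1,1)$, $u=\sigma ab+\tau ba$: the sorted target $a^{p}b^{p}$ forces $\sigma=0$, then $u=\tau ba$ is a single word, so $\tau=0$. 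For $n=3$, bidegree $(2,1)$, $u=\lambda_{1}aab+\lambda_{2}aba+\lambda_{3}baa$: the sorted target $a^{2p}b^{p}$ isolates $aab$ and gives $\lambda_{1}=0$; a $b$-initial monomial occurring in $(baa)^{\underset{9}{\Box}p}$ isolates $baa$ and gives $\lambda_{3}=0$; the leftover $\lambda_{2}aba$ is a single word, so $\lambda_{2}=0$. For bidegree $(1,2)$, $u=\mu_{1}abb+\mu_{2}bab+\mu_{3}bba$, the sorted target $a^{p}b^{2p}$ isolates $abb$ and gives $\mu_{1}=0$.

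The main obstacle is the last separation, between $bab$ and $bba$: they share the bidegree $(1,2)$ and both begin with $b$, so neither the bigrading nor the first-letter criterion distinguishes them. To break this tie I would introduce the statistic "length of the initial block of letters $b$": following the priority rule, a factor $bba$ contributes one safe $b$ while keeping its front equal to $b$, whereas a factor $bab$ (or $abb$) turns its front into an $a$ after its first $b$ and thus blocks immediately, so a selection containing $j$ factors $bba$ can reach an initial $b$-block of length at most $j+1$. Consequently a target whose initial $b$-block has length exactly $p+1$ can come only from the all-$bba$ selection; by nonnegativity such a target does occur there with positive coefficient, forcing $\mu_{3}=0$, after which $\mu_{2}bab$ is a single word and $\mu_{2}=0$. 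Verifying carefully this maximal-block bound — that length $p+1$ is attained by $(bba)^{\underset{9}{\Box}p}$ but by no selection involving $bab$ or $abb$ — is the delicate computational point of the whole argument.
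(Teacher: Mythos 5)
Your proposal is correct, and it follows the same overall strategy as the paper -- isolate each basis word $w$ by exhibiting a word of $u^{p}$ that can only be produced by the selection $(w,\dots,w)$, then invoke the positivity of the structure constants and the nonvanishing of $w^{\underset{9}{\Box}p}$ -- but with a different set of separating invariants. The paper works with the lexicographic order and the explicit computation of $\max(w_1\underset{9}{\Box}\dots\underset{9}{\Box}w_p)$ for each selection (including the distinguished words $w_7=(bab)^{p}$ and $w_8=b^{p}(ba)^{p}$), which requires knowing the full list of closed formulas for the powers $w^{\underset{9}{\Box}p}$ and is stated rather tersely in the $n=3$ case. You replace this bookkeeping by the $(\lvert w\rvert_a,\lvert w\rvert_b)$-bigrading, which $\underset{9}{\Box}$ respects, together with three order-free criteria: the sorted target $a^{r}b^{s}$ only sees factors of sorted form, a $b$-initial target only sees all-$b$-initial selections, and the length of the initial $b$-block is bounded by (number of $bba$ factors)$+1$. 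The gain is that the only genuinely delicate separation, $bab$ versus $bba$, is handled by an explicit combinatorial bound rather than by a comparison of lexicographic maxima; the paper's $w_8=b^{p}(ba)^{p}$ is in fact exactly your word of initial $b$-block length $p+1$, so the two arguments isolate $bba$ by the same witness, but your justification of why no mixed selection reaches it is the more complete one. The one point worth making fully explicit in your write-up is the observation (used implicitly throughout) that every word occurring in $w_1\underset{9}{\Box}\dots\underset{9}{\Box}w_p$ is an order-preserving interleaving of the $w_i$, since the recursion always emits the first letter of one of the remaining factors; this is what validates the "sorted factors only" criterion.
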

\begin{proof}
	We equip $X^*$ with the lexicographic order. For any words $v$ and $w$ we denote by $\max(v\Box w)$ the greatest word of length $l=\length(v)+\length(w)$ which appears in $v\Box w$ for the lexicographic order.

	 If $u=\displaystyle\sum_{\substack{w\in X^{*}\\ \length(w)=n}}\lambda_w w$ then
	\[u^p=\sum_{\substack{w\in X^{*}\\ \length(w)=n}}\lambda_w^p (w\underset{9}{\Box} \dots \underset{9}{\Box} w)+\sum_{l=2}^{\min(p,x_n)}\sum_{(\alpha_1,\dots,\alpha_l)\models p}\sum_{\substack{w_1<\dots <w_l\in X^{*}\\ \forall i \length(w_i)=n}}\lambda_{w_1}^{\alpha_1}\dots \lambda_{w_l}^{\alpha_l}(w_1\underset{9}{\Box} \dots \underset{9}{\Box} w_l).\]
	\begin{enumerate}
		\item If $n=1$ then the result is trivial.
		\item If $n=2$ then 
		\begin{align*}
			aa^p&=\frac{(2p)!}{2^p} \underbrace{a \dots a}_{2p\text{ times}}, &
			ab^p&=(p!)^2 \underbrace{a \dots a}_{p\text{ times}}\underbrace{b \dots b}_{p\text{ times}},&
			ba^p&=p! \underbrace{ba \dots ba}_{p\text{ times}},&
			bb^p&=\frac{(2p)!}{2^p}\underbrace{b \dots b}_{2p\text{ times}},
		\end{align*}
		and \[\max(aa^k\underset{9}{\Box} ab^l\underset{9}{\Box} ba^m\underset{9}{\Box} bb^n)=\underbrace{a\dots a}_{\substack{2k+l \\ \text{ times}}}\underbrace{b \dots b}_{\substack{2n+l \\ \text{ times}}}\underbrace{ba \dots ba}_{\substack{m \\ \text{ times}}}.\]
		Thus $\lambda_{aa}=\lambda_{bb}=\lambda_{ba}=\lambda_{ba}=0$.
		\item If $n=3$ then 
		\begin{align*}
		w_1=&aaa^p=\frac{(3p)!}{(3!)^p} \underbrace{a \dots a}_{3p\text{ times}}, &
		w_2=&aab^p=\frac{(2p)!p!}{2^p} \underbrace{a \dots a}_{2p\text{ times}}\underbrace{b \dots b}_{p\text{ times}},\\
		w_3=&aba^p=(p!)^2 \underbrace{a \dots a}_{p\text{ times}}\underbrace{ba \dots ba}_{p\text{ times}},&
		w_4=&abb^p=\frac{(2p)!p!}{2^p} \underbrace{a \dots a}_{p\text{ times}}\underbrace{b \dots b}_{2p\text{ times}},\\
		w_5=&baa^p=p! \underbrace{baa \dots baa}_{p\text{ times}},&
		w_6=&bbb^p=\frac{(3p)!}{(3!)^p} \underbrace{b \dots b}_{3p\text{ times}}.
		\end{align*}
		For $bab^p$ and $bba^p$, there are several terms in the result. For $bab^p$ we will use $w_7=\underbrace{bab \dots bab}_{p \text{ times}}$ and, for $bba^n$ we will use $w_8=\underbrace{b\dots b}_{p \text{ times}}\underbrace{ba \dots ba}_{p \text{ times}}$. In fact, for the lexicographic order,  we use the maximal term obtained in each product. For any $i$ determine how build $w_i$ by doing the weak shuffle of $p$ words of length $3$. We get $\lambda_{aaa}=\lambda_{bbb}=\lambda_{aba}=\lambda_{baa}=\lambda_{aab}=\lambda_{abb}=\lambda_{bab}=\lambda_{bba}=0$.
	\end{enumerate}
\end{proof}
\begin{conj}\label{conjKnp}
	Let $\underset{9}{\Box}$ be the weak shuffle product defined by $C_{9}$. For any positive integers $p$ and $n$, 
	we have $K_{(n,p)}=\{0\}$.
\end{conj}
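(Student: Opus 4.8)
The plan is to generalise the leading-term argument of Proposition~\ref{Cneuf}. I would fix the lexicographic order on $X^*$ used there and, for a homogeneous element $u=\sum_{\length(w)=n}\lambda_w w$ of degree $n$, expand its $p$-th power by commutativity and associativity as
\begin{equation*}
u^{p}=\sum_{\length(w)=n}\lambda_w^p\, w^{p}+\sum_{l\geq 2}\ \sum_{(\alpha_1,\dots,\alpha_l)\models p}\ \sum_{\substack{w_1<\dots<w_l\\ \length(w_i)=n}}\binom{p}{\alpha_1,\dots,\alpha_l}\,\lambda_{w_1}^{\alpha_1}\cdots\lambda_{w_l}^{\alpha_l}\,\bigl(w_1^{\alpha_1}\underset{9}{\Box}\cdots\underset{9}{\Box}w_l^{\alpha_l}\bigr),
\end{equation*}
where every power and product on the right is taken for $\underset{9}{\Box}$. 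Proving $K_{(n,p)}=\{0\}$ then amounts to showing that $u^{p}=0$ forces every $\lambda_w$ to vanish. I would order the $2^n$ words of length $n$ and eliminate the coefficients one at a time by downward induction: assuming $\lambda_{w'}=0$ for every $w'$ already treated, I would exhibit for the current word $w_0$ a \emph{witness} word $W_0$ of length $np$ and argue that, once the already-vanished coefficients are discarded, the coefficient of $W_0$ in $u^p$ equals a nonzero integer times $\lambda_{w_0}^p$; since $\K$ has characteristic $0$ this yields $\lambda_{w_0}=0$.

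The technical heart is a \emph{leading-word formula}: for words $w_1,\dots,w_r$ of length $n$ and exponents $\alpha_1,\dots,\alpha_r$ I would determine $\max\bigl(w_1^{\alpha_1}\underset{9}{\Box}\cdots\underset{9}{\Box}w_r^{\alpha_r}\bigr)$ and prove that its coefficient is a positive integer. The rule $au\underset{9}{\Box}bv=a(u\underset{9}{\Box}bv)$ coming from $C_9$ shows that every word occurring in such a product is an interleaving of the $w_i$ in which an $a$ is always output in preference to a $b$; the lexicographically maximal such interleaving is obtained by pushing the forced leading $a$-runs to the front and otherwise delaying the $a$'s hidden inside the factors, exactly as encoded by the identities $\max\bigl(aa^k\underset{9}{\Box}ab^l\underset{9}{\Box}ba^m\underset{9}{\Box}bb^n\bigr)=a^{2k+l}b^{2n+l}(ba)^m$ of Proposition~\ref{Cneuf}. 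The positivity of the coefficient, which is what makes the extraction work, is guaranteed by the Corollary preceding the statement, namely that $v\underset{9}{\Box}w\neq 0$ for all words $v,w$.

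The main obstacle is that the operation $(v,w)\mapsto\max\bigl(v\underset{9}{\Box}w\bigr)$, although associative and commutative on words, is far from free: already in degree $2$ one has $\max\bigl(aa\underset{9}{\Box}bb\bigr)=aabb=\max\bigl(ab\underset{9}{\Box}ab\bigr)$. Hence a single leading word $W_0$ may receive contributions from several distinct multisets of factors, and these collisions must be resolved before the coefficient of $W_0$ can be read off. For $n\leq 3$ this is done by hand in Proposition~\ref{Cneuf}, where the witnesses $a^{2p}$, $b^{2p}$, $(ba)^p$ and $a^pb^p$ are chosen so that every colliding multiset necessarily involves a factor $w'$ already known to satisfy $\lambda_{w'}=0$. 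Making this choice of witnesses systematic for all $n$ --- equivalently, showing that the monoid of leading words is cancellative and torsion-free, so that its monoid algebra is reduced and the associated graded of $(\K\langle X\rangle,\underset{9}{\Box})$ for this term order carries no nilpotents --- is the step I expect to be genuinely hard, and is presumably why the statement is left as a conjecture.
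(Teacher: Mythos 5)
The statement you are addressing is left as a \emph{conjecture} in the paper: the author proves only the cases $n\in\{1,2,3\}$ (Proposition~\ref{Cneuf}) and, by computer, $n\leq 7$ with $p=2$ (Lemma~\ref{lemmaprogramme}). Your proposal follows the paper's own intended strategy --- extract the lexicographically maximal word of each product $w_1^{\alpha_1}\underset{9}{\Box}\cdots\underset{9}{\Box}w_l^{\alpha_l}$ and eliminate coefficients one at a time, using that all structure constants of $\underset{9}{\Box}$ are non-negative so that the maximal word of a nonzero product has positive coefficient. You also identify exactly the obstruction the paper records in the remark following the conjecture: distinct multisets of factors can produce the same leading word (the paper's example is $\max(a\underset{9}{\Box}bb)=abb=\max(ab\underset{9}{\Box}b)$, yours is $\max(aa\underset{9}{\Box}bb)=aabb=\max(ab\underset{9}{\Box}ab)$), so the coefficient of a candidate witness word can receive contributions from several terms of the expansion of $u^p$ and a naive leading-term extraction fails.

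However, this means your submission is not a proof. The step you describe as ``presumably why the statement is left as a conjecture'' --- choosing, for every word $w_0$ of length $n$ and every $p$, a witness word $W_0$ such that all colliding multisets involve a factor whose coefficient has already been killed, or equivalently establishing enough cancellativity of the leading-word monoid --- is precisely the missing content, and you neither prove it nor reduce it to anything already established. Your appeal to the corollary $v\underset{9}{\Box}w\neq 0$ gives positivity of individual leading coefficients but says nothing about cancellation between colliding terms carrying coefficients $\lambda_{w_1}^{\alpha_1}\cdots\lambda_{w_l}^{\alpha_l}$ of unknown sign. So the gap is genuine: what you have written is an accurate diagnosis of why the conjecture is open, consistent with the paper's own discussion, but it does not settle the statement.
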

\Rq{}
	\begin{enumerate}
		\item By induction we can express $\max(u\underset{9}{\Box} v)$ for any words $u$ and $v$.
		\begin{description}
			\item[Case $w_1$ and $w_2$ are in $aX^*$.] There exist $\alpha,\beta\in\N^*$ and $w_1^{'}, w_2^{'}\in bX^*\cup\{1\}$ such that $w_1=\underbrace{a \dots a}_{\alpha \text{ times}}w_1^{'}$ and $w_2=\underbrace{a \dots a}_{\beta \text{ times}}w_2^{'}$. Then, 
			\[\max(w_1\underset{9}{\Box} w_2)=\underbrace{a \dots a}_{\alpha+\beta \text{ times}}\max(w_1^{'}\underset{9}{\Box} w_2^{'}).\] 
			\item[Case $w_1\in aX^*$ and $w_2\in bX^*$.] There exist $\alpha\in\N^*$ and $w_1^{'}\in bX^*\cup\{1\}$ such that $w_1=\underbrace{a \dots a}_{\alpha \text{ times}}w_1^{'}$. Then, 
			\[\max(w_1\underset{9}{\Box} w_2)=\underbrace{a \dots a}_{\alpha \text{ times}}\max(w_1^{'}\underset{9}{\Box} w_2).\] 
			\item[Case $w_1$ and $w_2$ are in $bX^*$.] There exist $\alpha,\beta\in\N^*$,  $p,q\in\N$ (they are not necessarily different from 0) and $w_1^{'}, w_2^{'}\in bX^*\cup\{1\}$ such that $w_1=\underbrace{b \dots b}_{\alpha \text{ times}}\underbrace{a \dots a}_{p \text{ times}}w_1^{'}$ and $w_2=\underbrace{b \dots b}_{\beta \text{ times}}\underbrace{a \dots a}_{q \text{ times}}w_2^{'}$. Thus, 
			\begin{itemize}
				\item If $0<q<p$  then 
				\[\max(w_1\underset{9}{\Box} w_2)=\underbrace{b \dots b}_{\alpha+\beta-1 \text{ times}}\underbrace{a \dots a}_{q \text{ times}}\max(b\underbrace{a \dots a}_{p \text{ times}}w_1^{'}\underset{9}{\Box} w_2^{'}).\] 
				\item If $0<p<q$ then 
				\[\max(w_1\underset{9}{\Box} w_2)=\underbrace{b \dots b}_{\alpha+\beta-1 \text{ times}}\underbrace{a \dots a}_{p \text{ times}}\max(w_1^{'}\underset{9}{\Box} b\underbrace{a \dots a}_{q \text{ times}}w_2^{'}).\] 
				\item If $0<p$ and $p=q$ then $\max(w_1\underset{9}{\Box} w_2)=\max(\tilde{w}_1,\tilde{w}_2)$ where
				\[\tilde{w}_1=\underbrace{b \dots b}_{\alpha+\beta-1 \text{ times}}\underbrace{a \dots a}_{q \text{ times}}\max(b\underbrace{a \dots a}_{p \text{ times}}w_1^{'}\underset{9}{\Box} w_2^{'})\]
				and
				\[\tilde{w}_2=\underbrace{b \dots b}_{\alpha+\beta-1 \text{ times}}\underbrace{a \dots a}_{p \text{ times}}\max(w_1^{'}\underset{9}{\Box} b\underbrace{a \dots a}_{q \text{ times}}w_2^{'})).\] 
				\item If $p=0$ (respectively $q=0$) then $w_1=\underbrace{b \dots b}_{\alpha \text{ times}}$ (respectively $w_2=\underbrace{b \dots b}_{\beta \text{ times}}$) and
				\[\max((w_1\underset{9}{\Box} w_2))=w_1w_2 (\text{ respectively }  \max((w_1\underset{9}{\Box} w_2))=w_2w_1).\] 
			\end{itemize}
		\end{description}
		For instance, \begin{align*}
			\max(ab\underset{9}{\Box} abaa)&=aa\max(b\underset{9}{\Box} baa)=aabbaa,\\
			\max(bba \underset{9}{\Box} baa)&=bbabaa,\\
			\max(bbbaaabba\underset{9}{\Box}bbaabbba)&=bbbbaa\max(baaabba\underset{9}{\Box}bbba)=bbbbaabbbabaaabba.
		\end{align*}
		\item  For $p=2$ Conjecture \ref{conjKnp} is implied by the statement  "Let $n$ be a positive integer, let $w_1$, $w_2$ and $w$ be three non-empty words of length $n$ such that $w_1\leq w_2\leq w$ and $w_1<w$. Then $\max(w_1\underset{9}{\Box}w_2)<\max(w\underset{9}{\Box} w)$." We attend a reasoning by induction but there are some obstructions. Indeed, it leads us to compare $\max(u_1\underset{9}{\Box}u_2)$ and $\max(u_3\underset{9}{\Box}u_4)$ where $u_1\leq u_3$, $u_2\leq u_4$, $\length(u_1)=\length(u_3)$, $\length(u_2)=\length(u_4)$ and $(u_1,u_2)\neq(u_3,u_4)$. Then, it leads us to determine if  $\max(v_1\underset{9}{\Box}v_2)>\max(v_3\underset{9}{\Box}v_4)$ or $\max(v_1\underset{9}{\Box}v_2)<\max(v_3\underset{9}{\Box}v_4)$ where $v_1< v_3$, $v_2>v_4$. If we consider $v_1=a$, $v_2=bb$, $v_3=ab$ and $v_4=b$, then we get  $\max(v_1\underset{9}{\Box}v_2)=abb=\max(v_3\underset{9}{\Box}v_4)$.
	\end{enumerate}	
By using computation programs realised with {\ttfamily Maxima}, (\emph{c.f.} Section \ref{computation}) we get:
\begin{lemma}\label{lemmaprogramme}
	Let $n$ be a positive integer smaller than or equal to $7$. Then $K_{n,2}=\{0\}$.
\end{lemma}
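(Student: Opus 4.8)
The plan is to prove, for each $n\in\llbracket 1,7\rrbracket$, that the only element $u=\sum_{\length(w)=n}\lambda_w w$ satisfying $u\underset{9}{\Box}u=0$ is $u=0$, by reducing the claim to a finite linear-algebraic verification that a computer can carry out. First I would observe that, by Proposition \ref{Cneuf}, the cases $n\leq 3$ are already settled, so the computational content is really the range $4\leq n\leq 7$; nonetheless the program treats all $n\leq 7$ uniformly. The key structural fact, established in the first Remark following Proposition \ref{Cneuf}, is that for the product $\underset{9}{\Box}$ one can compute $\max(v\underset{9}{\Box}w)$ recursively for the lexicographic order; this gives a concrete handle on the leading term of any product of basis words and is what makes the finite check tractable.

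The heart of the argument is the following reduction. Writing $u^2=u\underset{9}{\Box}u$ and expanding, $u^2=\sum_w \lambda_w^2\,(w\underset{9}{\Box}w)+\sum_{w_1<w_2}2\lambda_{w_1}\lambda_{w_2}\,(w_1\underset{9}{\Box}w_2)$, which is a vector in the finite-dimensional space $\K\langle X\rangle_{2n}$ spanned by words of length $2n$ over the two-letter alphabet $X=\{a,b\}$. The condition $u^2=0$ is therefore a system of $2^{2n}$ homogeneous quadratic equations in the $2^n$ unknowns $(\lambda_w)_{\length(w)=n}$. The plan is to let {\ttfamily Maxima} (see Section \ref{computation}) build the full multiplication table of $\underset{9}{\Box}$ on length-$n$ words, expand $u^2$ symbolically, collect the coefficient of each length-$2n$ word, and then verify that this polynomial system forces every $\lambda_w=0$. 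Concretely one extracts, for each basis word $W$ of length $2n$, the coefficient $c_W(\lambda)$ of $W$ in $u^2$, and checks that the resulting quadratic system has only the trivial solution over a field of characteristic $0$.

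The step I expect to be the main obstacle is not conceptual but combinatorial and computational: the dimension $2^n$ of the coefficient space grows quickly (reaching $128$ unknowns and $2^{14}=16384$ target words at $n=7$), so one must exploit structure to make the solvability check feasible rather than brute-forcing a generic quadratic system. The natural device, already visible in the proof of Proposition \ref{Cneuf}, is to order the words lexicographically and peel off leading terms: for a suitable ordering of the monomials $w$, the coefficient of $\max(w\underset{9}{\Box}w)$ in $u^2$ isolates $\lambda_w^2$ (up to contributions from lexicographically smaller cross terms that have already been shown to vanish). Thus I would arrange the verification as a triangular elimination, showing $\lambda_w=0$ one word at a time in decreasing order, each step using the coefficient of a carefully chosen length-$2n$ word whose only surviving contribution is $\lambda_w^2$ together with products involving coefficients already proved to be zero.

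Finally, once the program certifies that the coefficient system admits only the zero solution for each $n\in\llbracket 1,7\rrbracket$, one concludes $K_{n,2}=\{0\}$ for those $n$, which is exactly the statement of the Lemma. The correctness of the reduction rests only on the recursive description of $\max(v\underset{9}{\Box}w)$ and on $\K$ having characteristic $0$ (so that $\lambda_w^2=0$ implies $\lambda_w=0$ and the binomial coefficients appearing in $\underset{9}{\Box}$ are invertible and nonzero); the role of Section \ref{computation} is simply to make the otherwise large but entirely mechanical expansion and coefficient-matching rigorous and checkable.
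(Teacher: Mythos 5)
Your proposal is correct and, in its final form, is essentially the paper's argument: the Maxima code of Section \ref{computation} (the function {\ttfamily proof\_statement} built on {\ttfamily maximum\_product}) verifies exactly the leading-term inequality $\max(w_1\underset{9}{\Box}w_2)<\max(w\underset{9}{\Box}w)$ for all words $w_1\leq w_2\leq w$ with $w_1<w$ of length $n\leq 7$, which is the triangular elimination by lexicographic maxima that you describe (the paper phrases it as taking the maximal word in the support of $u$ rather than eliminating coefficients one at a time, and never forms the full quadratic system you first sketch). The only point to tighten is that at the first elimination step no coefficients have yet been shown to vanish, so the isolation of $\lambda_w^2$ must rest entirely on the strict inequality of maxima rather than on previously killed cross terms, which is precisely what the program certifies.
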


\begin{prop}
	Let $X$ be the alphabet $\{a,b\}$ and $\mathcal{S}$ be the set defined by $\mathcal{S}=\{C_1\dots C_{10}\}$ equipped with the relation $\equiv$ such that:
	for any $A$ and $B$ in $\mathcal{S}$, $A\equiv B$ if and only if there exists an homogenous isomorphism between $(\K\langle X\rangle,\Box_A)$ and $(\K\langle X\rangle,\Box_B)$ where $\Box_A$ (respectively $\Box_B$) is the shuffle product associated to $A$ (respectively $B$). Let $n$ be the number of isomorphic classes.
	
	Then $n\in\{7,8\}$.
\end{prop}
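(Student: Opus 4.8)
The plan is first to collapse the ten families to eight representatives using the isomorphisms already established, and then to separate those eight by homogeneous isomorphism invariants read off from the low-degree part of the product. By Proposition~\ref{valzeroun} the whole one-parameter family $C_2$ is a single class (isomorphic to its $k=1$ member), and by Proposition~\ref{propisom} one has $C_3\equiv C_7$ and $C_8\equiv C_9$. Hence every class is represented by one of $C_1,C_2,C_3,C_4,C_5,C_6,C_9,C_{10}$, which already gives $n\le 8$. It remains to show that these eight fall into at least seven classes and that the only possible further coincidence is $C_9\equiv C_{10}$. Throughout I write $V_m$ for the degree-$m$ component of $\K\langle X\rangle$; since $\equiv$ uses homogeneous isomorphisms, any such $\phi$ satisfies $\phi(V_m)=V_m$ and intertwines the products, so every quantity built from the graded multiplication is a class invariant.

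\textbf{The three invariants.} First I would use $r:=\dim(V_1\Box V_1)$. From $a\Box a=(f_1(a\ot a)+f_2(a\ot a))aa$, $b\Box b=(f_1(b\ot b)+f_2(b\ot b))bb$ and $a\Box b=f_1(a\ot b)ab+f_1(b\ot a)ba$ one computes $r=0$ for $C_1$, $r=1$ for $C_2$ and $C_5$, $r=2$ for $C_3,C_4,C_6$, and $r=3$ for $C_9,C_{10}$. This isolates $C_1$ and splits the rest into three groups. Second, inside each group I would examine the square-zero locus $Z_1=\{x\in V_1:\ x\Box x=0\}$, which a linear $\phi|_{V_1}$ carries to the corresponding locus of the target; here $Z_1$ is a union of two lines for $C_2$ but a single line for $C_5$, and $Z_1=\{0\}$ for $C_6$ while $Z_1$ is a line for each of $C_3$ and $C_4$. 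This separates $C_2$ from $C_5$ and separates $C_6$ from $\{C_3,C_4\}$. Finally, to split $C_3$ from $C_4$ I would invoke the computation already made in the discussion of Case $n=4$: the space $\{u\in V_2:\ u\Box u=0\}$ equals $\Span(bb,ab,ba)$ (dimension $3$) for $C_3$ but $\Span(aa,ab)$ (dimension $2$) for $C_4$, and this dimension is an invariant, so $C_3\not\equiv C_4$. Together these show that $C_1,C_2,C_3,C_4,C_5,C_6$ are six pairwise non-isomorphic classes, each also distinct from $C_9$ and $C_{10}$, the only two with $r=3$.

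\textbf{The remaining pair and conclusion.} The only undecided pair is $\{C_9,C_{10}\}$, and here all of the preceding invariants agree: both have $r=3$, both have $Z_1=\{0\}$, and by Proposition~\ref{Cneuf} both satisfy $\{u\in V_2:\ u\Box u=0\}=\{0\}$; indeed, in suitable bases the letter-to-degree-$2$ multiplication is the Veronese map for either product, so no low-degree linear invariant can distinguish them. The natural finer invariant is the presence of homogeneous nilpotents: $C_{10}$ is the classical shuffle, which over a field of characteristic $0$ is a polynomial algebra and hence has none, whereas the vanishing of all homogeneous nilpotents for $C_9$ is precisely Conjecture~\ref{conjKnp}, verified only in low degree (Proposition~\ref{Cneuf} and Lemma~\ref{lemmaprogramme}). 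This is exactly where the argument is blocked: with the available results one cannot decide whether $C_9\equiv C_{10}$, so $n=7$ if they are isomorphic and $n=8$ otherwise. In either case $n\in\{7,8\}$, and the obstruction is intrinsic to the open status of the conjecture.
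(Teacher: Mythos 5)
Your proof is correct, and it takes essentially the approach the paper intends: the paper states this proposition without an explicit proof, but your argument uses exactly the isomorphisms of Propositions \ref{propisom} and \ref{valzeroun} to collapse $\{C_2\}$, $\{C_3,C_7\}$, $\{C_8,C_9\}$, and then separates the eight representatives by the square-zero loci in degrees $1$ and $2$ (the sets $K_1$, $K_2$ used verbatim in the proof of the three-letter analogue and in the Case $n=4$ discussion), leaving only the pair $C_9,C_{10}$ undecided, which is precisely the source of the ambiguity $n\in\{7,8\}$ tied to Conjecture \ref{conjKnp}. The added invariant $\dim(V_1\Box V_1)$ is a harmless and correct supplement to the paper's toolkit.
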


\subsection{Weak shuffle products on $\K\langle\{a,b,c\}\rangle$} 
Let $X=\{a,b,c\}$ be an alphabet of cardinality $3$. 
Let $C$ be the $12$-tuple $C=\Big(f_1(a\ot b),f_1(b\ot a),f_1(b\ot c), f_1(c\ot b), f_1(a\ot c), f_1(c\ot a) f_1(a\ot a), f_2(a\ot a), f_1(b\ot b), f_2(b\ot b),f_1(c\ot c), f_2(c\ot c)\Big)$. By using Theorem \ref{carctwshuffle}, if $(k,m)\in(\K^*)^2$ and $\alpha\in\K$ then $C$ is one of the following tuples
\begin{align*}
C_1 =  &(0,0,0,0,0,0,0,0,0,0,0,0),             & C_2   =&(0,0,0,0,0,0,1,1,0,0,0,0),\\
C_3 =  &(0,0,0,0,0,0,1,1,1,1,0,0),             & C_4   =&(0,0,0,0,0,0,1,1,1,1,1,1),\\
C_5 =  &(k,0,0,0,0,0,0,0,0,0,0,0),             & C_6   =&(k,0,0,0,0,0,0,0,0,0,1,1),\\
C_7 =  &(1,0,0,0,0,0,1,1,0,0,0,0),             & C_8   =&(1,0,0,0,0,0,0,0,1,1,0,0),\\
C_9 =  &(1,0,0,0,0,0,1,1,1,1,0,0),             & C_{10}=&(1,0,0,0,0,0,1,1,0,0,1,1),\\
C_{11}=&(1,0,0,0,0,0,0,0,1,1,1,1),             & C_{12}=&(1,0,0,0,0,0,1,1,1,1,1,1),\\
C_{13}=&(1,1,0,0,0,0,1,1,1,1,0,0),             & C_{14}=&(1,1,0,0,0,0,1,1,1,1,1,1),\\
C_{15}=&(k,0,0,m,0,0,0,0,0,0,0,0),             & C_{16}=&(k,0,0,1,0,0,0,0,0,0,1,1),\\
C_{17}=&(1,0,0,1,0,0,0,0,1,1,0,0),             & C_{18}=&(1,0,0,1,0,0,1,1,1,1,0,0),\\
C_{19}=&(1,0,0,1,0,0,1,1,0,0,1,1),             & C_{20}=&(1,0,0,1,0,0,1,1,1,1,1,1),\\
C_{21}=&(k,0,0,0,m,0,0,0,0,0,0,0),             & C_{22}=&(1,0,0,0,1,0,1,1,0,0,0,0),\\
C_{23}=&(1,0,0,0,1,0,\alpha,1-\alpha,0,0,0,0), & C_{24}=&(k,0,0,0,1,0,0,0,0,0,1,1),\\
C_{25}=&(1,0,0,0,1,0,1,1,1,1,0,0),             & C_{26}=&(1,0,0,0,1,0,\alpha,1-\alpha,1,1,0,0),\\
C_{27}=&(1,0,0,0,1,0,0,0,1,1,1,1),             & C_{28}=&(1,0,0,0,1,0,1,1,1,1,1,1),\\
C_{29}=&(1,0,0,0,1,0,\alpha,1-\alpha,1,1,1,1), & C_{30}=&(k,0,m,0,1,0,0,0,0,0,0,0),\\
C_{31}=&(1,0,k,0,1,0,1,1,0,0,0,0),             & C_{32}=&(1,0,1,0,1,0,0,0,1,1,0,0),\\
C_{33}=&(k,0,1,0,1,0,0,0,0,0,1,1),             & C_{34}=&(1,0,1,0,1,0,1,1,1,1,0,0),\\
C_{35}=&(1,0,1,0,1,0,1,1,0,0,1,1),             & C_{36}=&(1,0,1,0,1,0,0,0,1,1,1,1),\\
C_{37}=&(1,0,1,0,1,0,1,1,1,1,1,1),             & C_{38}=&(1,0,k,0,1,0,\alpha,1-\alpha,0,0,0,0),\\
C_{39}=&(1,0,1,0,1,0,\alpha,1-\alpha,1,1,0,0), & C_{40}=&(1,0,1,0,1,0,\alpha,1-\alpha,0,0,1,1),\\
C_{41}=&(1,0,1,0,1,0,\alpha,1-\alpha,1,1,1,1), & C_{42}=&(1,1,1,0,1,0,1,1,1,1,0,0),\\
C_{43}=&(1,1,1,0,1,0,1,1,1,1,1,1)\},           & C_{44}=&(1,1,0,1,0,1,1,1,1,1,0,0),\\
C_{45}=&(1,1,0,1,0,1,1,1,1,1,1,1),             & C_{46}=&(1,1,0,1,0,1,1,1,1,1,\alpha,1-\alpha),\\
C_{47}=&(1,1,1,1,1,1,1,1,1,1,1,1).             &        & 
\end{align*}
\begin{prop}
	Let $X$ be the alphabet $\{a,b,c\}$ and $\mathcal{S}$ be the set defined by $\mathcal{S}=\{C_1\dots C_{47}\}$ equipped with the relation $\equiv$ such that:
	for any $A$ and $B$ in $\mathcal{S}$, $A\equiv B$ if and only if there exists an homogenous isomorphism between $(\K\langle X\rangle,\Box_A)$ and $(\K\langle X\rangle,\Box_B)$ where $\Box_A$ (respectively $\Box_B$) is the shuffle product associated to $A$ (respectively $B$). Let $n$ be the number of  isomorphic classes.

	Then $n\in\llbracket 33, 39 \rrbracket$.
\end{prop}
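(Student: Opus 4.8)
The plan is to pin $n$ between $33$ and $39$ by attacking the two bounds separately: an upper bound $n\le 39$ obtained by exhibiting enough explicit homogeneous isomorphisms to collapse the $47$ families $C_1,\dots,C_{47}$, and a lower bound $n\ge 33$ obtained by producing homogeneous-isomorphism invariants that separate at least $33$ of them. The surviving gap corresponds to the few families for which neither an isomorphism nor a separating invariant can be produced by hand.

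For the upper bound I would use three sources of isomorphisms. First, any bijection of the alphabet $\{a,b,c\}$ extends to a graded algebra automorphism of $\K\langle X\rangle$ that permutes the entries of the tuple $C$ accordingly; this identifies every pair of families that differ only by a relabelling of the letters. Second, Proposition \ref{propisom} shows that a family carrying an entry of the form $(\,\cdot\,,\alpha,1-\alpha,\,\cdot\,)$ is isomorphic to its normalisation with $f_1(a\ot a)=f_2(a\ot a)=1$; applied to $C_{23},C_{26},C_{29},C_{38},C_{39},C_{40},C_{41},C_{46}$ this glues each of them to an already-listed $\{0,1\}$-valued family (for instance $C_{23}\equiv C_{22}$, $C_{26}\equiv C_{25}$, $C_{29}\equiv C_{28}$). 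Third, Proposition \ref{valzeroun}, together with the explicit rescaling maps $\varphi_1,\varphi_2$ of its proof, collapses every family carrying a free parameter $k$ or $m$ in $\K\setminus\{0,1\}$ onto the corresponding family with that parameter set to $1$ (this handles $C_5,C_6,C_{15},C_{16},C_{21},C_{24},C_{30},C_{33}$). Counting the resulting identifications shows the $47$ families fall into at most $39$ classes.

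For the lower bound I would introduce homogeneous-isomorphism invariants and compute them on one representative per putative class. The most effective is $\dim K_{(d,2)}$, the dimension of the space of degree-$d$ square-zero elements, exactly the invariant that already separated the two-letter families $C_3$ and $C_4$; since a homogeneous isomorphism preserves degree and the relation $u^2=0$, the sequence $\bigl(\dim K_{(1,2)},\dim K_{(2,2)},\dots\bigr)$ is an invariant. I would supplement it with: the number of letters $x$ satisfying $x\Box x=0$, which is governed by $f_1(x\ot x)+f_2(x\ot x)=0$ and hence by item \ref{reltrois} of Theorem \ref{carctwshuffle}; the number of absorbing letters $y$ (those with $yu\Box v=y(u\Box v)$, i.e. $f_1(y\ot z)=1$ and $f_1(z\ot y)=0$ for all $z$); and the graded rank of the multiplication map in low degrees. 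Evaluating this tuple of invariants on the normalised families and checking that it takes at least $33$ distinct values yields $n\ge 33$.

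The hard part is the residual gap of $6$. After the normalisations above, a handful of families involving the third letter $c$ with a \emph{mixed} interaction pattern (some $f_1$-values equal to $0$ and some to $1$, as in the cluster $C_{30}$--$C_{41}$) have identical invariant tuples, yet I expect no explicit homogeneous isomorphism between them to be constructible by the rescaling technique. This is precisely the obstruction flagged in the remark following Conjecture \ref{conj}: once two letters $x,y\in X\setminus\{a,b\}$ interact nontrivially, ``it is hard to anticipate the part of $x$ facing $y$'', so the combinatorics of $\max(u\Box v)$ and of the square-zero loci become intractable to settle by hand. Deciding whether each such pair is isomorphic would require either a finer invariant or a direct construction, and until then one can only certify $n\in\llbracket 33,39\rrbracket$.
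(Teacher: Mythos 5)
Your proposal follows essentially the same route as the paper: the upper bound $n\le 39$ comes from Proposition \ref{propisom} and Proposition \ref{valzeroun} (eight identifications $C_{22}\equiv C_{23}$, $C_{25}\equiv C_{26}$, $C_{28}\equiv C_{29}$, $C_{31}\equiv C_{38}$, $C_{34}\equiv C_{39}$, $C_{35}\equiv C_{40}$, $C_{37}\equiv C_{41}$, $C_{45}\equiv C_{46}$), and the lower bound $n\ge 33$ from the square-zero invariants in degrees $1$, $2$ and $3$, leaving six unresolved pairs. The only discrepancy is your description of where the residual ambiguity lies: the paper's six undecided pairs ($C_{9}/C_{13}$, $C_{12}/C_{14}$, $C_{34}/C_{42}$, $C_{36}/C_{44}$, $C_{43}/C_{47}$, $C_{45}/C_{47}$) differ essentially in the value of $f_1(b\ot a)$, rather than in the mixed $c$-interactions of the cluster $C_{30}$--$C_{41}$ that you single out.
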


\begin{proof}
	Thanks to Proposition \ref{valzeroun}, in any set, it is sufficient to consider that $k=m=1$.
	Thanks to Proposition \ref{propisom}, we can prove that cases $C_{22}$ and $C_{23}$ are isomorphic, cases $C_{25}$ and $C_{26}$ are isomorphic, cases $C_{28}$ and $C_{29}$ are isomorphic, cases $C_{31}$ and $C_{38}$ are isomorphic, cases $C_{34}$ and $C_{39}$ are isomorphic, cases $C_{35}$ and $C_{40}$ are isomorphic, cases $C_{37}$ and $C_{41}$ are isomorphic and cases $C_{45}$ and $C_{46}$ are isomorphic.	

	Let $K_1$, $K_2$ and $K_3$ be the sets defined by:
	\begin{itemize}
		\item $K_1=\bigg\{u=\displaystyle\sum_{x\in X}\lambda_x x,~u^2=0\bigg\}$,
		\item $K_2=\bigg\{u=\displaystyle\sum_{\substack{w\in X^*\\ \length(w)=2}}\lambda_w w,~u^2=0\bigg\}$,
		\item $K_3=\bigg\{u=\displaystyle\sum_{\substack{w\in X^*\\ \length(w)=3}}\lambda_w w,~u^2=0\bigg\}$.
	\end{itemize}
	By using $K_1$ and $K_2$, we conclude that $C_6$, $C_7$ and $C_8$ are in three different isomorphic classes, $C_9$, $C_{10}$ and $C_{11}$ are in three different isomorphic classes,  $C_{16}$, $C_{17}$, $C_{22}$ and $C_{24}$ are in four different isomorphic classes, $C_{18}$, $C_{19}$, $C_{25}$ and $C_{27}$ are in four different isomorphic classes, $C_{15}$ and $C_{21}$ are in two different isomorphic classes, $C_{31}$, $C_{32}$ and $C_{33}$ are in three different isomorphic classes, $C_{34}$, $C_{35}$ and $C_{36}$ are in three different isomorphic classes, $C_{42}$ and $C_{44}$ are in two different isomorphic classes. With $K_3$, we prove that $C_{20}$ and $C_{28}$ are in two different isomorphic classes.
	Those sets do not enable us to conclude if there exists an isomorphism between $C_{9}$ and $C_{13}$, between $C_{12}$ and  $C_{14}$, between $C_{34}$ and  $C_{42}$, between $C_{36}$ and  $C_{44}$, between $C_{43}$ and  $C_{47}$, between $C_{45}$ and  $C_{47}$.
\end{proof}
\section{Weak shuffle algebras, dendriform algebras, quadri-algebras}
Dendriform algebras \cite{Loday2001} and quadri-algebras  \cite{Aguiar2004} are algebraic structures which enables one to split the associativity. Actually, a dendriform algebra is an algebra $\mathcal{A}$ equipped with a left product $\prec$ and a right product $\succ$  making the couple $(\mathcal{A},\prec+\succ)$ into an associative algebra and satisfying compatibilities. A quadri-algebra is obtained by splitting each product of a dendriform algebra in two products and the four new products must respect compatibilities. So, a quadri-algebra leads to two dendriform structures and the sum of the four products gives an associative product.

Those two notions have been extensively studied. For instance, Loday and Ronco give the free dendriform algebra on one generator as an algebra over binary planar trees \cite{Loday1998}. Thanks to dendriform algebras, Foissy proves \cite[proposition 31]{Foissy2002b} that the decorated Hopf algebra of  Loday and Ronco and  the decorated Hopf algebra of planar rooted trees are isomorphic.  Analogue theorems of the Cartier-Quillen-Milnor-Moore theorem have been proved: by Ronco \cite{Ronco2001} for dendriform algebras, by Chapoton \cite{Chapoton2002} for dendriform bialgebras and by Foissy \cite{Foissy2007} for bidendriform bialgebras. The bidendriform case implies that  $\FQSym$ is isomorphic to  one decorated Hopf algebra of planar rooted trees.

About quadri-algebras, Aguiar and Loday \cite{Aguiar2004} have determined a quadri-algebra structure on infinitesimal algebras and have focused on the free quadri-algebra on one generator.  Vallette \cite{Vallette2008} has proved some conjectures given by Aguiar and Loday in \cite[conjectures 4.2, 4.5 and 4.6]{Aguiar2004}. Foissy has presented the free quadri-algebra on one generator as a sub-object of $\FQSym$ \cite{Foissy2015}.

In this section, we recall the dendriform algebra structure and the quadri-algebra structure underlying the classical shuffle algebra. Then, we consider the case of weak shuffle algebras. We prove that they can be equipped with a dendriform structure yet only two weak shuffle products can be considered as coming from a quadri-algebra.

\subsection{Dendriform algebras}
\subsubsection{Background} 
\begin{defi}
	A dendriform algebra is a vector space  $\mathcal{D}$  equipped with two $\prec$ products $\succ$ such that $\forall x,y,z\in\mathcal{D}$,
	\begin{align*}
	(x\prec y)\prec z&=x\prec(y\prec z)+x\prec(y\succ z),\\
	(x\succ y)\prec z&=x\succ(y\prec z),\\
	(x\prec y)\succ z+ (x\succ y)\succ z&=x\succ(y\succ z).
	\end{align*}
\end{defi}
\begin{theo}
	Let $X$ be a countable alphabet and $\shuffle$ be the classical shuffle product. We define $\prec$ and $\succ$ respectively by: 
	\[au\prec bv=a(u\shuffle bv), ~ au\succ bv=b(au\shuffle v),\]
	for any letters $a$ and $b$ and any words $u$ and $v$.
	Then $(\K\langle X\rangle,\prec,\succ)$ is a dendriform algebra and for any words $u$ and $v$
	\[u\shuffle v=u\prec v+u\succ v.\] 
\end{theo}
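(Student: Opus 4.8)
The plan is to deduce both assertions from the associativity of the classical shuffle product $\shuffle$, which is already established. First I would verify the decomposition $u\shuffle v=u\prec v+u\succ v$: writing $u=au'$ and $v=bv'$ with $a,b$ letters and $u',v'$ words, the right-hand side is $a(u'\shuffle bv')+b(au'\shuffle v')$, which is exactly the recursive clause defining $au'\shuffle bv'$. Since $\prec$ and $\succ$ are only specified when both arguments are nonempty, the dendriform identities are to be checked on the subspace spanned by nonempty words (the augmentation ideal), the empty word being handled by the usual unit convention.

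Next I would record two factoring identities obtained by extending $\prec$ and $\succ$ bilinearly. For the right product, every word $xw'$ satisfies $xw'\succ cw=c(xw'\shuffle w)$, so for any linear combination $q$ of nonempty words and any letter $c$ one has $q\succ cw=c(q\shuffle w)$. For the left product, $au\prec r=a(u\shuffle r)$ for every linear combination $r$ of nonempty words; more generally, if $q=a\cdot p$ denotes a combination of words all beginning with the letter $a$ (so $p$ is obtained by deleting the leading $a$), then $q\prec r=a(p\shuffle r)$.

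Then I would check each dendriform axiom on generators $x=au$, $y=bv$, $z=cw$, reducing it to associativity of $\shuffle$. For the first axiom the left-hand side is
\[(au\prec bv)\prec cw=a(u\shuffle bv)\prec cw=a\big((u\shuffle bv)\shuffle cw\big),\]
while the right-hand side is
\[au\prec(bv\prec cw)+au\prec(bv\succ cw)=a\big(u\shuffle b(v\shuffle cw)\big)+a\big(u\shuffle c(bv\shuffle w)\big)=a\big(u\shuffle(bv\shuffle cw)\big),\]
using the decomposition $b(v\shuffle cw)+c(bv\shuffle w)=bv\shuffle cw$; the two sides then agree precisely by associativity of $\shuffle$. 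By the same manipulations the second axiom reduces to $(au\shuffle v)\shuffle cw=au\shuffle(v\shuffle cw)$, and the third, after factoring the common leading letter $c$ via the right-product identity, to $(au\shuffle bv)\shuffle w=au\shuffle(bv\shuffle w)$; both are instances of associativity.

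I expect no genuine difficulty here: the associativity of $\shuffle$ is available from the earlier theorem, so the content of the argument is the bookkeeping above. The only points that require a little care are the bilinear extension underlying the factoring identities (one must verify that prepending a fixed letter commutes with $\prec$, and that $\succ$ factors the leading letter of its right argument) and the empty-word edge cases, both of which are routine.
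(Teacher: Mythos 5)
Your proof is correct. The paper states this theorem as background without giving a proof, but your reduction of each dendriform axiom to the associativity of $\shuffle$ --- expanding via the recursive defining relation, factoring the leading letter through $\prec$ and $\succ$ by bilinearity, and recombining $\prec+\succ$ into $\shuffle$ --- is exactly the strategy the paper uses when it proves the weak-shuffle generalisation (of which this is the special case $f_1\equiv f_2\equiv 1$), so the approach is essentially the same.
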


\begin{theo}
	Let $X$ be a countable alphabet and $\shuffle$ be the classical shuffle product. We define $\wedge$ and $\vee$ respectively by: 
	\[ua\wedge vb=(u\shuffle vb)a, ~ ua\vee vb=(ua\shuffle v)b,\]
	for any letters $a$ and $b$ and any words $u$ and $v$.
	Then $(\K\langle X\rangle,\wedge,\vee)$ is a dendriform algebra and for any words $u$ and $v$
	\[u\shuffle v=u\wedge v+u\vee v.\]  
\end{theo}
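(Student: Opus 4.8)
The plan is to deduce this theorem from the preceding one (the $(\prec,\succ)$ structure) by conjugating with the word-reversal involution, which interchanges "control by the first letter" with "control by the last letter." Let $\tau\colon\K\langle X\rangle\to\K\langle X\rangle$ be the linear involution reversing words, $\tau(x_{i_1}\cdots x_{i_n})=x_{i_n}\cdots x_{i_1}$; it is homogeneous for the length grading and restricts to an involution of the augmentation ideal (words of length $\geq 1$), on which $\prec,\succ,\wedge,\vee$ live. The one fact I would first record is that $\tau$ is an automorphism of $(\K\langle X\rangle,\shuffle)$, that is $\tau(u\shuffle v)=\tau(u)\shuffle\tau(v)$ for all words $u,v$: reversal carries the set of interleavings of $(u,v)$ bijectively onto the set of interleavings of $(\tau(u),\tau(v))$, which can be seen combinatorially or by a short induction on $\length(u)+\length(v)$ using the recursion for $\shuffle$.

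The key step is to exhibit $\wedge$ and $\vee$ as the transports of $\prec$ and $\succ$ along $\tau$, namely
\[
ua\wedge vb=\tau\bigl(\tau(ua)\prec\tau(vb)\bigr),\qquad
ua\vee vb=\tau\bigl(\tau(ua)\succ\tau(vb)\bigr).
\]
For the first identity I would use $\tau(ua)=a\,\tau(u)$ and $\tau(vb)=b\,\tau(v)$ together with $au'\prec bv'=a(u'\shuffle bv')$ to get $\tau(ua)\prec\tau(vb)=a\bigl(\tau(u)\shuffle b\tau(v)\bigr)$; applying $\tau$, using $\tau(a\,W)=\tau(W)\,a$ and the automorphism property of the previous paragraph, this becomes $\bigl(u\shuffle vb\bigr)a=ua\wedge vb$. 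The computation for $\vee$ is verbatim with $\succ$ and $au'\succ bv'=b(au'\shuffle v')$ in place of $\prec$. Now invoke the general principle that transporting a dendriform algebra along a linear isomorphism $\phi$ (setting $x\prec'y=\phi(\phi^{-1}(x)\prec\phi^{-1}(y))$ and $x\succ'y=\phi(\phi^{-1}(x)\succ\phi^{-1}(y))$) again yields a dendriform algebra, since each axiom for $(\prec',\succ')$ is the image under $\phi$ of the corresponding axiom for $(\prec,\succ)$. Taking $\phi=\tau=\tau^{-1}$ and citing the previous theorem shows $(\K\langle X\rangle,\wedge,\vee)$ is dendriform. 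For the last assertion, summing the two transport identities gives $\wedge+\vee=\tau\circ\shuffle\circ(\tau\otimes\tau)$ on the augmentation ideal, whence $u\wedge v+u\vee v=\tau\bigl(\tau(u)\shuffle\tau(v)\bigr)=\tau\bigl(\tau(u\shuffle v)\bigr)=u\shuffle v$.

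The argument is essentially obstacle-free once $\tau$ is introduced; the only place demanding care is the bookkeeping of reversals in the transport identities, which rests entirely on $\shuffle$ commuting with $\tau$. A reader preferring a self-contained route could instead verify the three dendriform axioms for $(\wedge,\vee)$ directly, computing $(x\wedge y)\wedge z$, $(x\vee y)\wedge z$, $(x\wedge y)\vee z+(x\vee y)\vee z$ on words ending in prescribed letters and inducting on total length via the recursions for $\wedge$ and $\vee$. This is more calculational but structurally parallel to the proof of the first dendriform theorem, and the reversal-map proof simply makes that parallelism into an isomorphism.
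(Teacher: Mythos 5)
Your proposal is correct. Note that the paper itself states this theorem in the ``Background'' subsection without proof (it is a classical fact, parallel to the preceding theorem on $(\prec,\succ)$), so there is no in-paper argument to compare against; your write-up supplies a proof the paper omits. The route you choose --- transporting the $(\prec,\succ)$ dendriform structure along the word-reversal involution $\tau$ --- is sound in every step: $\tau$ is indeed an automorphism of $(\K\langle X\rangle,\shuffle)$ (either by the bijection on interleavings or by induction using the two recursions for $\shuffle$ recorded in the paper's Theorem 3), the two transport identities $ua\wedge vb=\tau\bigl(\tau(ua)\prec\tau(vb)\bigr)$ and $ua\vee vb=\tau\bigl(\tau(ua)\succ\tau(vb)\bigr)$ check out exactly as you compute them, and the general fact that conjugating a dendriform structure by a linear isomorphism yields a dendriform structure is immediate since each axiom is carried to the corresponding axiom. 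The identity $u\shuffle v=u\wedge v+u\vee v$ then falls out of summing the transports. What your approach buys is economy and structure: it turns the evident left/right symmetry between the two background theorems into an explicit isomorphism, so nothing needs to be recomputed; the direct verification you sketch as an alternative (checking the three axioms on words ending in prescribed letters, by induction on total length) is the more pedestrian route one would otherwise follow, mirroring the paper's own inductive proof of the dendriform theorem for weak shuffle products.
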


\subsubsection{Weak shuffle products}
\begin{theo}
Let $X$ be a countable alphabet and $\Box$ be a weak shuffle product  such that $f_1(a\ot a)\in\{0,1\}$ for any letter $a\in X$. We define the products $\prec$ and $\succ$ respectively by: 
\[au\prec bv=f_1(a\ot b)a(u\Box bv), ~ au\succ bv=f_2(a\ot b)b(au\Box v),\]
for any letters $a$ and $b$ and any words $u$ and $v$.
Then $(\K\langle X\rangle,\prec,\succ)$ is a dendriform algebra.  
\end{theo}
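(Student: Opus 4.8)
The plan is to verify directly the three defining identities of a dendriform algebra. Since $\prec$ and $\succ$ only act on words of length at least $1$, I would work on the augmentation ideal of $\K\langle X\rangle$. There, because $f_3\equiv 0$ for a weak shuffle product, the definitions give $au\prec bv+au\succ bv=f_1(a\ot b)a(u\Box bv)+f_2(a\ot b)b(au\Box v)=au\Box bv$, so that $\prec+\succ=\Box$. As $\Box$ is associative (being a weak shuffle product), the sum of the three dendriform axioms is automatic; the real content is to establish each one separately. By trilinearity it suffices to test them on words $x=au$, $y=bv$, $z=cw$, with $a,b,c$ letters and $u,v,w$ (possibly empty) words.

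First I would expand each side with the definitions of $\prec,\succ$ and then reorganise every nested bracket by the associativity of $\Box$. For the first axiom one gets
\begin{align*}
(au\prec bv)\prec cw&=f_1(a\ot b)f_1(a\ot c)\,a\big((u\Box bv)\Box cw\big),\\
au\prec(bv\prec cw)&=f_1(a\ot b)f_1(b\ot c)\,a\big(u\Box b(v\Box cw)\big),\\
au\prec(bv\succ cw)&=f_1(a\ot c)f_2(b\ot c)\,a\big(u\Box c(bv\Box w)\big).
\end{align*}
Rewriting $bv\Box cw=f_1(b\ot c)b(v\Box cw)+f_2(b\ot c)c(bv\Box w)$ inside the left-hand side and using $(u\Box bv)\Box cw=u\Box(bv\Box cw)$, both sides become the \emph{same} two tail expressions with scalar coefficients, so it is enough that these coefficients agree; the axiom thus reduces to $f_1(a\ot b)f_1(b\ot c)[f_1(a\ot c)-1]=0$ and $f_1(a\ot c)f_2(b\ot c)[f_1(a\ot b)-1]=0$. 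The identical manipulation shows the second axiom holds by associativity of $\Box$ alone, while the third reduces to $f_2(a\ot c)f_1(a\ot b)[f_2(b\ot c)-1]=0$ and $f_2(b\ot c)f_2(a\ot b)[f_2(a\ot c)-1]=0$.

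It then remains only to prove these four scalar identities, which I would do by case analysis on the coincidences among $a,b,c$. When $a,b,c$ are pairwise distinct, each identity is an instance of relation \ref{relquatre} of Theorem \ref{carctwshuffle} after rewriting every $f_2(x\ot y)$ as $f_1(y\ot x)$ via relation \ref{relun} (the four cases correspond to the triples $(a,b,c)$, $(a,c,b)$, $(c,a,b)$, $(c,b,a)$). When exactly two letters coincide, the identities collapse onto item \ref{reltrois}: in the symmetric subcase \ref{reltroisun} they are exactly the three relations $f_1(a\ot a)f_1(a\ot b)[f_1(a\ot b)-1]=0$, $f_1(a\ot a)f_1(b\ot a)[f_1(b\ot a)-1]=0$ and $f_1(a\ot b)f_1(b\ot a)[f_1(a\ot a)-1]=0$ (possibly with the roles of the two letters exchanged), while in subcase \ref{reltroisdeux} one substitutes $f_1(a\ot b)=1$, $f_1(b\ot a)=0$ and checks that a factor vanishes in each product.

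The main obstacle, and the precise reason for the hypothesis $f_1(a\ot a)\in\{0,1\}$, is the fully diagonal case $a=b=c$. There the four identities become $f_1(a\ot a)^2[f_1(a\ot a)-1]=0$ and $f_2(a\ot a)^2[f_2(a\ot a)-1]=0$, which demand $f_1(a\ot a),f_2(a\ot a)\in\{0,1\}$; indeed, the admissible weak shuffle product with $f_1(a\ot a)=f_2(a\ot a)=\tfrac12$ (case \ref{reltroisdeux} with $\alpha=\tfrac12$) already violates the first axiom on $x=y=z=a$, so such a restriction is unavoidable. The hypothesis supplies $f_1(a\ot a)\in\{0,1\}$ directly, and combined with item \ref{reltrois} of Theorem \ref{carctwshuffle} it forces $f_2(a\ot a)\in\{0,1\}$ as well (in subcase \ref{reltroisun} the two values are equal, in subcase \ref{reltroisdeux} one has $f_2(a\ot a)=1-f_1(a\ot a)$). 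With both diagonal values in $\{0,1\}$, the two diagonal identities hold trivially, and this closes the last case and completes the verification.
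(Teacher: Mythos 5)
Your proof is correct and follows essentially the same route as the paper's: expand each dendriform axiom via the definitions, use the associativity of $\Box$ to rewrite the nested brackets, and reduce everything to scalar identities among the values of $f_1$ and $f_2$, which are then checked against Theorem \ref{carctwshuffle} according to which of $a$, $b$, $c$ coincide. The only (harmless) difference is organisational: the paper wraps the same computation in an induction on $\length(u)+\length(v)+\length(w)$ after a base case on single letters, whereas you observe that one expansion already reduces each axiom to statements about $\Box$ alone, and your explicit listing of the four scalar identities together with the diagonal obstruction at $f_1(a\ot a)=f_2(a\ot a)=\tfrac12$ matches the paper's treatment.
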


\begin{proof}
	Let $\Box$ be a weak shuffle product and let $a$, $b$ and $c$ be three letters of $X$. Then:
	\begin{align*}
	(a\prec b)\prec c =& f_1(a\ot b)f_1(a\ot c)f_1(b\ot c)abc+f_1(a\ot b)f_1(a\ot c)f_2(b\ot c)acb,\\
	a\prec(b\Box c)=&  f_1(a\ot b)f_1(b\ot c)abc+f_1(a\ot c)f_2(b\ot c)acb,\\
	(a\succ b)\prec c=& f_2(a\ot b)f_1(b\ot c)f_1(a\ot c)bac+f_2(a\ot b)f_1(b\ot c)f_2(a\ot c)bca,\\
	a\succ (b\prec c)=& f_2(a\ot b)f_1(b\ot c)f_1(a\ot c)bac+f_2(a\ot b)f_1(b\ot c)f_2(a\ot c)bca,\\
	(a\Box b)\succ c=& f_1(a\ot b)f_2(a\ot c)cab+f_2(a\ot b)f_2(b\ot c)cba,\\
	a\succ (b\succ c)=& f_2(b\ot c)f_2(a\ot c)f_1(a\ot b)cab+f_2(b\ot c)f_2(a\ot c)f_2(a\ot b)cba.
	\end{align*}

Then $(a\succ b)\prec c=a\succ (b\prec c)$.
If the three letters are all distinct or only two of them are equal or $a=b=c$ with $f_1(a\ot a)=f_2(a\ot a)\in\{0,1\}$ the relations given by Theorem \ref{carctwshuffle} imply $(a\prec b)\prec c=a\prec(b\Box c)$ and $(a\Box b)\succ c=a\succ (b\succ c)$. If $a=b=c$ with $f_1(a\ot a)=1-f_2(a\ot a)$ then $(a\prec a)\prec a=a\prec (a\Box a)$ and $(a\Box a)\succ a=a\succ (a\succ a)$ if and only if $f_1(a\ot a)\in\{0,1\}$ and then $f_1(a\ot a)f_2(a\ot a)=0$.

We assume now there exists an integer $n\leq 3$ such that, for any non-empty words $u$, $v$ and $w$ with $\length(u)+\length(v)+\length(w)=n$, relations $(u\prec v)\prec w =u\prec(v\Box w)$, $(u\succ v)\prec w=u\succ (v\prec w)$ and $(u\Box v)\succ w=u\succ (v\succ w)$ are satisfied.

Let $u$, $v$ and $w$ be three non-empty words such that $\length(u)+\length(v)+\length(w)=n+1$. There exist three letters $a$, $b$ and $c$, not necessarily distinct and three words $u_1$, $v_1$ and $w_1$, not necessarily non-empty, such that $u=au_1$, $v=bv_1$ and $w=cw_1$. Then 
\begin{enumerate}
	\item \begin{align*}
	(u\prec v)\prec w=& f_1(a\ot b)f_1(a\ot c)a\big[(u_1\Box bv_1)\Box cw_1\big]=f_1(a\ot b)f_1(a\ot c)a\big[u_1\Box (bv_1\Box cw_1)\big]\\
	=&f_1(a\ot b)f_1(a\ot c)f_1(b\ot c)a\big[u_1\Box b(v_1\Box cw_1)\big]\\
	&+f_1(a\ot b)f_1(a\ot c)f_2(b\ot c)a\big[u_1\Box c(bv_1\Box w_1)\big],\\
	u\prec (v\Box w)=& f_1(b\ot c)f_1(a\ot b)a\big[u_1\Box b(v_1\Box cw_1)\big]\\
	&+f_2(b\ot c)f_1(a\ot c)f_1(a\ot c)a\big[u_1\Box c(bv_1\Box w_1)\big].
	\end{align*}
	\item \begin{align*}
	(u\succ v)\prec w=& f_2(a\ot b)f_1(b\ot c)b\big[(au_1\Box v_1)\Box cw_1\big],\\
	u\succ (v\prec w)=& f_1(b\ot c)f_2(a\ot b)b\big[au_1\Box (v_1\Box cw_1)\big].
	\end{align*}
	\item \begin{align*}
	(u\Box v)\succ w=& f_1(a\ot b)f_2(a\ot c)c\big[a(u_1\Box bv_1)\Box cw_1\big]+f_2(a\ot b)f_2(b\ot c)c\big[b(au_1\Box v_1)\Box cw_1\big],\\
	u\succ (v\succ w)=& f_2(b\ot c)f_2(a\ot c)c\big[au_1\Box (bv_1\Box w_1)\big]=f_2(b\ot c)f_2(a\ot c)c\big[(au_1\Box bv_1)\Box w_1\big]\\
	=&f_2(b\ot c)f_2(a\ot c)f_1(a\ot b)c\big[a(u_1\Box bv_1)\Box w_1\big]\\
	&+f_2(b\ot c)f_2(a\ot c)f_2(a\ot b)c\big[b(au_1\Box v_1)\Box w_1\big].
	\end{align*}
\end{enumerate}

Thus, $(u\prec v)\prec w =u\prec(v\Box w)$, $(u\succ v)\prec w=u\succ (v\prec w)$ and $(u\Box v)\succ w=u\succ (v\succ w)$.
\end{proof}

By considering the right hand side rather than the left hand side, we get the following definition and theorem.
\begin{defi}
	Let $X$ be a countable alphabet. An end weak shuffle product on $\K\langle X\rangle$ is an associative and commutative product $\Box_E$ such that for any $(a,b)\in(X)^2$ and any $(u,v)\in(X^*)^2$ then
	\[ua\Box_E vb=f_{1,E}(a\ot b)(u\Box_E vb)a+f_{2,E}(a\ot b)(ua\Box_E v)b,\]
	where $f_{1,E}$ and $f_{2,E}$ are linear maps from $\K.X\ot \K.X$ to $\K$,  $u\Box_E 0=0\Box_E u=0$ and $u\Box_E 1=1\Box_E u=u$.
\end{defi}
\begin{theo}
	Let $X$ be a countable alphabet and let $\Box_E$ be an end weak shuffle product  such that $f_{1,E}(a\ot a)\in\{0,1\}$ for any letter $a\in X$. We define the products $\wedge$ and $\vee$ by: 
	\[ua\wedge vb=f_{1,E}(a\ot b)(u\Box_E vb)a, ~ au\vee bv=f_{2,E}(a\ot b)(ua\Box_E v)b,\]
	for any letters $a$ and $b$ and any words $u$ and $v$.
	Then $(\K\langle X\rangle,\wedge,\vee)$ is a dendriform algebra.  
\end{theo}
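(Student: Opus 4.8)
The plan is to deduce this theorem from the preceding one on $\prec$ and $\succ$ by a word\nobreakdash-reversal argument, exploiting the fact that an end weak shuffle product is exactly the mirror image of a weak shuffle product. First I would introduce the linear involution $\rho\colon\K\langle X\rangle\to\K\langle X\rangle$ reversing each word, so that $\rho(x_{i_1}\cdots x_{i_n})=x_{i_n}\cdots x_{i_1}$, and in particular $\rho(aw)=\rho(w)a$ for any letter $a$ and word $w$, with $\rho(1)=1$ and $\rho(0)=0$. I would then define a new product $\Box$ on $\K\langle X\rangle$ by $p\Box q:=\rho\bigl(\rho(p)\Box_E\rho(q)\bigr)$. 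Since $\Box_E$ is associative and commutative and $\rho$ is an involutive linear bijection, $\Box$ is automatically associative and commutative, and $\rho$ is by construction an algebra isomorphism from $(\K\langle X\rangle,\Box)$ to $(\K\langle X\rangle,\Box_E)$.

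Next I would identify $\Box$ explicitly. Writing $p=ap_1$ and $q=bq_1$, so that $\rho(p)=\rho(p_1)a$ and $\rho(q)=\rho(q_1)b$, I apply the defining recursion of $\Box_E$ to $\rho(p_1)a\Box_E\rho(q_1)b$ and then apply $\rho$, using $\rho(aw)=\rho(w)a$ and the fact that $\rho$ carries $\Box$ to $\Box_E$. A short computation gives
\[ap_1\Box bq_1=f_{1,E}(a\ot b)\,a(p_1\Box bq_1)+f_{2,E}(a\ot b)\,b(ap_1\Box q_1),\]
together with the unit and zero relations. Hence $\Box$ is a weak shuffle product with $f_1=f_{1,E}$, $f_2=f_{2,E}$ and $f_3\equiv0$; in particular the hypothesis $f_{1,E}(a\ot a)\in\{0,1\}$ becomes $f_1(a\ot a)\in\{0,1\}$, so the preceding theorem applies to $\Box$ and yields a dendriform algebra $(\K\langle X\rangle,\prec,\succ)$ with $ap_1\prec bq_1=f_1(a\ot b)a(p_1\Box bq_1)$ and $ap_1\succ bq_1=f_2(a\ot b)b(ap_1\Box q_1)$.

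Finally I would check that $\rho$ intertwines these products with $\wedge$ and $\vee$. Using $\rho(aw)=\rho(w)a$ once more, one finds $\rho(p\prec q)=\rho(p)\wedge\rho(q)$ and $\rho(p\succ q)=\rho(p)\vee\rho(q)$ for all words $p,q$: indeed $\rho(p\prec q)=f_{1,E}(a\ot b)\rho(p_1\Box q)a=f_{1,E}(a\ot b)(\rho(p_1)\Box_E\rho(q))a$, which is exactly $\rho(p)\wedge\rho(q)$ after writing $\rho(p)=\rho(p_1)a$, and symmetrically for $\succ$ and $\vee$. Since $\rho$ is then a linear bijection sending the pair $(\prec,\succ)$ to $(\wedge,\vee)$, applying $\rho$ to each of the three dendriform axioms satisfied by $(\prec,\succ)$ transforms them into the corresponding axioms for $(\wedge,\vee)$, so $(\K\langle X\rangle,\wedge,\vee)$ is a dendriform algebra. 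The only point requiring care is the identification step, namely verifying that the reversed product $\Box$ really is a weak shuffle product with the advertised structure maps; this is exactly where the hypothesis $f_{1,E}(a\ot a)\in\{0,1\}$ enters, licensing the appeal to the preceding theorem.
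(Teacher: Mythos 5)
Your proof is correct and follows essentially the same route as the paper, which justifies this theorem only by the remark that one ``considers the right hand side rather than the left hand side'' of words: your reversal involution $\rho$ is precisely the rigorous form of that mirror argument, transporting the already-proved dendriform structure $(\prec,\succ)$ for the weak shuffle product $\rho(\rho(\cdot)\Box_E\rho(\cdot))$ onto $(\wedge,\vee)$. The verification that $\rho$ intertwines the recursions and the three dendriform axioms is exactly as you describe, so nothing is missing.
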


\Rq{1} Let $\alpha$ be a real number. Let $\Box$ be the weak shuffle product satisfying $f_1(a\ot a)=1-f_2(a\ot a)=\alpha$ for a unique letter $a$. Even if $\Box$ does not depend on the value of $\alpha$, to express the algebra as a dendriform algebra the assumption $\alpha\in\{0,1\}$ is necessary.

\subsection{Quadri-algebras}
\subsubsection{Background}
\begin{defi}
	A quadri-algebra is $\mathcal{Q}$ is a vector space equipped with four products $\searrow$, $\nearrow$, $\nwarrow$ and $\swarrow$ such that: for any $x,y,z\in\mathcal{Q}$,
	\begin{align*}
	(x\nwarrow y)\nwarrow z&=x\nwarrow(y\cdot z), & (x\nearrow y)\nwarrow z&=x\nearrow(y\prec z),\\ 
	(x\swarrow y)\nwarrow z&=x\swarrow(y\wedge z), & (x\searrow y)\nwarrow z&=x\searrow(y\nwarrow z),\\ 
	(x\prec y)\swarrow z&=x\swarrow(y\vee z), & (x\succ y)\swarrow z&=x\searrow(y\swarrow z),\\ 
	\end{align*}
	and
	\begin{align*}
	(x\wedge y)\nearrow z&=x\nearrow(y\succ z),\\
	(x\vee y)\nearrow z&=x\searrow(y\nearrow z),\\
	(x\cdot y)\searrow z&=x\searrow(y\searrow z).
	\end{align*}
	where  \begin{align*}
	x\prec y&=x\nwarrow y+x\swarrow y,& x\wedge y&=x\nearrow y+x\nwarrow y,\\
	x\succ y&=x\nearrow y+x\searrow y,& x\vee y&=x\searrow y+x\swarrow y,\\
	\end{align*}
	and
	\[x\cdot y= x\nwarrow y+x\swarrow y+x\nearrow y+x\searrow y=x\prec y+x\succ y= x\wedge y+x\vee y.\]
\end{defi}

\begin{theo}\label{Quadri}
	Let $X$ be a countable alphabet and let $\shuffle$ be the classical shuffle product. The products $\searrow$, $\nearrow$, $\nwarrow$ and $\swarrow$ are defined as follow: 
	\[auc\nwarrow bvd=a(u\shuffle bvd)c, ~ auc\swarrow bvd=a(uc\shuffle bv)d,\] 
	\[auc\nearrow bvd=b(au\shuffle vd)c, ~ auc\searrow bvd=b(auc\shuffle v)d\]
	for any letters $a$, $b$, $c$ and $d$ and any words $u$ and $v$.
	Then $(\K\langle X\rangle,\searrow, \nearrow, \nwarrow,\swarrow)$ is a quadri-algebra. 
\end{theo}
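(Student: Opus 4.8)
The plan is to prove all nine quadri-algebra axioms at once by reading every product through the combinatorial description of the shuffle. Recall that $u\shuffle v$ is the sum over all interleavings of $u$ and $v$ preserving the internal order of each word. With this description, the four defining formulas express precisely a partition of the interleavings of the two arguments according to the pair (word contributing the \emph{first} letter, word contributing the \emph{last} letter): in $x\nwarrow y$ both letters come from $x$, in $x\swarrow y$ the first comes from $x$ and the last from $y$, in $x\nearrow y$ the first from $y$ and the last from $x$, and in $x\searrow y$ both from $y$. In the same spirit, the dendriform products impose a single such constraint: $x\prec y$ (resp. $x\succ y$) collects the terms of $x\shuffle y$ whose first letter comes from $x$ (resp. from $y$), while $x\wedge y$ (resp. $x\vee y$) collects those whose last letter comes from $x$ (resp. from $y$); and $x\cdot y=x\shuffle y$ imposes none. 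These facts I would read off directly from the recursive definitions.

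First I would pass to triple products. Since $\shuffle$ is associative and commutative (by the theorem recalled earlier), $x\shuffle y\shuffle z$ is an unambiguous sum over interleavings of three words, and every term carries a well-defined pair $(F,L)\in\{x,y,z\}^2$ naming the words contributing its first and last letter. Writing $S_{F,L}$ for the corresponding sub-sum, the key structural point is that nesting the products merely \emph{intersects} the first- and last-letter constraints: the first letter of an inner product such as $x\nwarrow y$ is already the first letter of $x$, and the outer product sees it as coming from the same word, so the constraints compose instead of conflicting. Reading the constraints off each formula, the nine relations, taken in the order displayed, assert exactly that both sides equal, respectively, $S_{x,x}$, $S_{y,x}$, $S_{x,y}$, $S_{y,y}$, $S_{x,z}$, $S_{y,z}$, $S_{z,x}$, $S_{z,y}$, $S_{z,z}$, which exhaust all of $\{x,y,z\}^2$. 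For example, in $(x\prec y)\swarrow z=x\swarrow(y\vee z)$ the left side fixes the first letter in $x$ (via $\prec$) and the last in $z$ (via the outer $\swarrow$), while the right side fixes the first in $x$ (via the outer $\swarrow$) and the last in $z$ (since $\vee$ sends the last letter to $z$); both equal $S_{x,z}$. The eight remaining relations are checked identically by tracking the two origins.

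To turn this reading into a rigorous proof I would run an induction on $\length(x)+\length(y)+\length(z)$, expanding both sides of each relation with the recursive definitions of the four arrows and of $\shuffle,\prec,\succ,\wedge,\vee$, then recombining the pieces using associativity of $\shuffle$ and the inductive hypothesis. The purely nested relations such as $(x\searrow y)\nwarrow z=x\searrow(y\nwarrow z)$ and $(x\succ y)\swarrow z=x\searrow(y\swarrow z)$ fall out of a single such recombination, and the remaining ones from the nesting of one outer constraint over a dendriform split.

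The main obstacle I anticipate is twofold. First, one must justify carefully that first- and last-letter origins genuinely propagate through a nested product, so that the cells $S_{F,L}$ form a true partition with nothing omitted and nothing double counted; this is exactly where associativity of $\shuffle$ is indispensable and where a careless term count would fail. Second, the defining formulas only cover words of length at least two, since they use both a first and a last letter; the short cases must therefore be fixed by the convention forced by the combinatorial picture, namely that a one-letter word has equal first and last letter, whence $a\nwarrow b=a\searrow b=0$, $a\swarrow b=ab$ and $a\nearrow b=ba$, which one checks is compatible with $\prec=\nwarrow+\swarrow$ and $\wedge=\nearrow+\nwarrow$, and these base cases must be verified directly. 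Once the edge cases are settled the induction closes and all nine identities hold, so $(\K\langle X\rangle,\searrow,\nearrow,\nwarrow,\swarrow)$ is a quadri-algebra.
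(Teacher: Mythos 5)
Your argument is correct, and it is considerably more self-contained than what the paper actually writes: the paper's proof of this theorem is a one-line citation to Aguiar and Loday together with the single identity
\[auc\shuffle bvd=a(uc\shuffle bvd)+b(auc\shuffle vd)=(au\shuffle bvd)c+(auc\shuffle bv)d,\]
which is exactly the algebraic form of your observation that the shuffle splits simultaneously by the origin of the first letter and by the origin of the last letter. Your combinatorial reading --- partitioning the interleavings of three words into the nine cells $S_{F,L}$ and checking that each axiom equates two descriptions of the same cell --- is the standard conceptual proof, and your cell assignments $S_{x,x},S_{y,x},S_{x,y},S_{y,y},S_{x,z},S_{y,z},S_{z,x},S_{z,y},S_{z,z}$ for the nine relations in displayed order are all correct. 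You also rightly flag the two places where a careless version would fail: the propagation of first/last-letter origins through a nested product (which is where associativity of $\shuffle$, i.e.\ the identification of iterated shuffles with $3$-shuffles counted with multiplicity, is needed to avoid double counting), and the length-one base cases, for which your conventions $a\nwarrow b=a\searrow b=0$, $a\swarrow b=ab$, $a\nearrow b=ba$ are the ones forced by the picture and are consistent with $\prec=\nwarrow+\swarrow$ and $\wedge=\nearrow+\nwarrow$. What your route buys is a uniform proof of all nine axioms at once without nine separate inductions; what the paper's route buys is brevity, since it outsources the verification. To make your sketch fully rigorous you would still need to write out the induction (or the explicit bijection between nested and simultaneous interleavings), but there is no gap in the idea.
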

\begin{proof} It is proved in \cite[Section 1.8]{Aguiar2004}. The main ingredient of the proof is the following statement:
	for any letters $a$, $b$, $c$ and $d$ and any words $u$ and $v$ we have
\[auc\shuffle bvd=a(uc\shuffle bvd)+b(auc\shuffle vd)=(au\shuffle bvd)c+(auc\shuffle bv)d.\]
\end{proof}

\subsubsection{Weak shuffle algebras}
\begin{prop}
	Let $X$ be a countable alphabet of cardinality at least 2. Let $\Box$ be a weak shuffle product. There exists an end weak shuffle product $\Box_E$ such that $\Box=\Box_E$ if, and only if, $\Box$ is the null product or the classical shuffle product.
\end{prop}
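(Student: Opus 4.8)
The plan is to prove both implications, the forward one being immediate and the reverse one carrying the content. For the ``if'' direction, the null product is an end weak shuffle product with $f_{1,E}\equiv f_{2,E}\equiv 0$, since then $ua\Box_E vb=0$ reproduces $u\Box v=0$ for non-empty $u,v$. For the classical shuffle, the symmetric formula recalled in Section~1, namely $x_iux_k\shuffle x_jvx_l=(x_iu\shuffle x_jvx_l)x_k+(x_iux_k\shuffle x_jv)x_l$, is exactly the end recursion with $f_{1,E}\equiv f_{2,E}\equiv 1$; together with the associativity and commutativity of $\shuffle$ this says that $\shuffle$ is an end weak shuffle product. Hence both products qualify.

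For the ``only if'' direction, assume $\Box=\Box_E$ and write $f_1,f_2$ for the head maps and $f_{1,E},f_{2,E}$ for the tail maps. First I would match the two descriptions on words of length two: expanding $a\Box b$ with the head recursion and $a\Box_E b$ with the tail recursion and comparing the coefficients of $ab$ and $ba$ gives, for distinct letters, $f_{1,E}(a\ot b)=f_2(a\ot b)$ and $f_{2,E}(a\ot b)=f_1(a\ot b)$, while the diagonal only yields $f_{1,E}(a\ot a)+f_{2,E}(a\ot a)=f_1(a\ot a)+f_2(a\ot a)$. Next I would evaluate $\Box$ and $\Box_E$ on the pair $(a,ab)$ for distinct $a,b$: unfolding the head recursion produces a combination of $aab$ and $aba$, unfolding the tail recursion (using $a\Box_E a=(f_1(a\ot a)+f_2(a\ot a))aa$ together with the identifications just obtained) produces another, and equating the coefficients of $aab$ and $aba$ yields the two key relations
\[ f_1(a\ot a)\,[1-f_1(a\ot b)]=0, \qquad f_2(a\ot b)\,[f_2(a\ot a)-1]=0. \]
Running the same computation with the roles of the letters exchanged, these hold for every ordered pair of distinct letters (such a pair exists because $X$ has cardinality at least two).

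The last step is a purely combinatorial collapse driven by these two relations together with the dichotomy in item~\ref{reltrois} of Theorem~\ref{carctwshuffle}. I would first rule out branch~\ref{reltroisdeux}: if some letter $x$ satisfies $f_1(x\ot y)=1$ and $f_1(y\ot x)=0$ for all $y\neq x$, then the two relations applied to the ordered pair $(y,x)$ force $f_1(y\ot y)=0$ and $f_2(y\ot y)=1$ for every $y\neq x$, so each such $y$ is itself in branch~\ref{reltroisdeux} and must therefore satisfy $f_1(y\ot x)=1$, contradicting $f_1(y\ot x)=0$. Thus every letter falls in branch~\ref{reltroisun}, i.e.\ $f_1(a\ot a)=f_2(a\ot a)=\alpha_a\in\{0,1\}$. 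The two relations then read $\alpha_a=1\Rightarrow f_1(a\ot b)=1$ for all $b\neq a$, and $\alpha_a=0\Rightarrow f_1(b\ot a)=0$ for all $b\neq a$. A ``mixed'' choice $\alpha_a=1$, $\alpha_{a'}=0$ would make $f_1(a\ot a')$ equal to both $1$ and $0$; hence all the $\alpha_a$ coincide, giving either $f_1\equiv f_2\equiv 0$ (the null product) or $f_1\equiv f_2\equiv 1$ (the classical shuffle).

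The main obstacle is the elimination of branch~\ref{reltroisdeux}: the head/tail relations by themselves do not obviously forbid a letter with ``one-sided'' behaviour, and the contradiction surfaces only after feeding the forced diagonal values $f_1(y\ot y)=0$, $f_2(y\ot y)=1$ back into the characterization. Some care is also needed to extract the two relations cleanly, which is why the test pair $(a,ab)$ (rather than, say, $(a,ba)$, whose tail expansion still involves the undetermined $f_{1,E}(a\ot a),f_{2,E}(a\ot a)$) is the right one to use.
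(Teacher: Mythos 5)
Your proof is correct, but it follows a genuinely different route from the paper's. The paper first reduces to an alphabet of cardinality $2$ with $\{0,1\}$-valued structure constants (implicitly via Propositions \ref{propisom} and \ref{valzeroun}) and then runs through the explicit classification $C_1,\dots,C_{10}$ of Section 2.2, exhibiting for each non-trivial tuple a specific low-degree product ($a\Box_E ba$, $ba\Box_E b$ or $ab\Box_E a$) whose head and tail expansions cannot agree. You instead work uniformly on an arbitrary alphabet: the length-two comparison pins down $f_{1,E}(a\ot b)=f_2(a\ot b)$, $f_{2,E}(a\ot b)=f_1(a\ot b)$ and the diagonal sum, the pair $(a,ab)$ then yields the two universal identities $f_1(a\ot a)[1-f_1(a\ot b)]=0$ and $f_2(a\ot b)[f_2(a\ot a)-1]=0$ (your choice of $(a,ab)$ over $(a,ba)$ is indeed what keeps the undetermined individual diagonal values $f_{1,E}(a\ot a)$, $f_{2,E}(a\ot a)$ out of the computation), and these identities together with the dichotomy \ref{reltroisun}/\ref{reltroisdeux} of Theorem \ref{carctwshuffle} force all diagonal values $\alpha_a$ to coincide and to lie in $\{0,1\}$, giving $f_1\equiv f_2\equiv 0$ or $f_1\equiv f_2\equiv 1$. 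I checked the elimination of branch \ref{reltroisdeux} and the exclusion of mixed $\alpha_a$; both are sound. What your approach buys is independence from the two-letter classification and from the reduction to $\{0,1\}$-valued coefficients (a step the paper asserts rather than details), at the cost of leaning more heavily on the structure of Theorem \ref{carctwshuffle}; the paper's approach is more computational but reuses machinery already in place.
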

\begin{proof}
	It is sufficient to prove the proposition for an alphabet of cardinality 2 and assume images of functions $f_1$, $f_2$, $f_{1,E}$ and $f_{2,E}$ are subsets of $\{0,1\}$. Let $C$ be the $6$-tuple $C=\Bigg( f_1(a\ot b),f_1(b\ot a), f_1(a\ot a), f_2(a\ot a), f_1(b\ot b), f_2(b\ot b)\Bigg)$.
	\begin{description}
		\item[Case $C=(1,0,0,0,0,0)$.] If $\Box=\Box_E$ then 
		\begin{align*}
		a\Box_E ba=&\left(f_{1,E}(a\ot a)+f_{2,E}(a\ot a)f_{1,E}(a\ot b)\right)baa + f_{2,E}(a\ot a)f_{1,E}(b\ot a)aba\\
				  =&a\Box ba=aba.
		\end{align*}
		Thus $f_{2,E}(a\ot a)=1$ and then $a\Box_E a=\left(f_{1,E}(a\ot a)+1\right)aa\neq0$ and yet $a\Box a=0$. Contradiction.
		
		\item[Cases $C=(1,0,1,1,0,0)$ and $C=(1,0,1,0,0,0)$.] We recall that these two cases are isomorphic. If $\Box=\Box_E$ then 
		\begin{align*}
		a\Box_E ba=&\left(f_{1,E}(a\ot a)+f_{2,E}(a\ot a)f_{1,E}(a\ot b)\right)baa + f_{2,E}(a\ot a)f_{1,E}(b\ot a)aba\\
		=&\left(f_{1,E}(a\ot a)f_{1,E}(a\ot b)+f_{2,E}(a\ot a)\right)baa + f_{1,E}(a\ot a)f_{1,E}(b\ot a)aba\\
		=&ba\Box_E a=a\Box ba=aba.
		\end{align*}
		Thus $f_{1,E}(a\ot a)=f_{2,E}(a\ot a)=f_{1,E}(b\ot a)=1$ and $f_{1,E}(a\ot b)=-1$. Contradiction.
		
		\item[Cases $C=(1,0,1,0,1,1)$ and $C=(1,0,1,1,1,1)$.] The same calculations as in the previous case answer the question.
		
		\item[Case $C=(1,0,0,0,1,1)$.] If $\Box=\Box_E$ then 
		\begin{align*}
		ba\Box_E b=&f_{1,E}(a\ot b)\left(f_{1,E}(b\ot b)+f_{2,E}(b\ot b)\right)bba + f_{1,E}(b\ot a)bab\\
		=&ba\Box b=bba+bab.
		\end{align*}
		Thus $f_{1,E}(a\ot b)=f_{1,E}(b\ot a)=f_{1,E}(a\ot a)=f_{2,E}(a\ot a)=f_{1,E}(b\ot b)=f_{2,E}(b\ot b)=1$ with $f_{1,E}(b\ot b)+f_{2,E}(b\ot b)=1$. Contradiction.
	
		\item[Cases $C=(0,0,1,1,0,0)$.] If $\Box=\Box_E$ then 
		\begin{align*}
		ab\Box_E a=&f_{1,E}(b\ot a)\left(f_{1,E}(a\ot a)+f_{2,E}(a\ot a)\right)aab + f_{1,E}(a\ot b)aba\\
		=&ab\Box a=aab.
		\end{align*}
		Thus $f_{1,E}(a\ot b)=0$, $f_{1,E}(b\ot a)=1$ and $f_{1,E}(a\ot a)+f_{2,E}(a\ot a)=1$. Contradiction.
		
		\item[Cases $C=(0,0,1,1,1,1)$.] The same calculations as in the previous case answer the question.
	\end{description}
\end{proof}
\begin{cor}	The construction used in Theorem \ref{Quadri} does not lead to a quadri-algebra structure on a weak shuffle product $\Box$ except if $\Box$ is the null shuffle or the classical shuffle.
\end{cor}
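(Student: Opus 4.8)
The plan is to deduce the Corollary directly from the Proposition just established, by identifying the terminal-letter data that any quadri-algebra structure necessarily produces. The starting point is the observation, already exploited in the proof of Theorem \ref{Quadri}, that in a quadri-algebra the four products recombine into two dendriform structures: the \emph{left} one with $\prec = \nwarrow + \swarrow$ and $\succ = \nearrow + \searrow$, and the \emph{right} one with $\wedge = \nwarrow + \nearrow$ and $\vee = \swarrow + \searrow$, the associated associative product being $\cdot = \prec + \succ = \wedge + \vee$. When the construction of Theorem \ref{Quadri} is transported to a weak shuffle product $\Box$, the total product $\cdot$ is $\Box$ itself and the left dendriform structure is exactly the one attached to $\Box$ by its first-letter recursion, so no information is lost there.

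First I would make explicit the right dendriform structure $(\wedge,\vee)$ coming out of the construction. Since $u\cdot v = u\Box v$ and $\cdot = \wedge + \vee$, the pair $(\wedge,\vee)$ is a decomposition of $\Box$ in which $u\wedge v$ collects the terms whose final letter is the last letter of $u$ and $u\vee v$ those whose final letter is the last letter of $v$, just as in the classical formulas $auc\wedge bvd=(au\shuffle bvd)c$ and $auc\vee bvd=(auc\shuffle bv)d$. I would then check that this decomposition is governed by a recursion on the terminal letters alone, that is, that there exist linear maps $f_{1,E}$ and $f_{2,E}$ from $\K.X\ot\K.X$ to $\K$ with
\[ua\Box vb = f_{1,E}(a\ot b)(u\Box vb)a + f_{2,E}(a\ot b)(ua\Box v)b\]
for all letters $a,b$ and words $u,v$. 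In other words, the existence of the quadri-algebra forces $\Box$ to be simultaneously an end weak shuffle product $\Box_E$, namely $\Box=\Box_E$.

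Finally I would invoke the Proposition: an equality $\Box=\Box_E$ with $\Box_E$ an end weak shuffle product holds if and only if $\Box$ is the null product or the classical shuffle product. This yields the Corollary at once. The converse inclusion implicit in the wording \emph{except if} is then covered by Theorem \ref{Quadri} for the classical shuffle and is trivial for the null product, so the two exceptional products genuinely do carry the quadri-algebra structure.

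The main obstacle I anticipate lies in the second step: one must verify that the terminal-letter decomposition produced by a quadri-algebra structure is really of end weak shuffle type, with honest scalar coefficients $f_{1,E}(a\ot b)$ and $f_{2,E}(a\ot b)$ depending only on the two final letters and with each output word ending in one of those two letters. Concretely this amounts to reading the quadri-algebra axioms that involve $\wedge$, $\vee$ and $\cdot$ on words of small length and extracting from them the claimed recursion; once the recursion is secured on short words, induction on total length propagates it, and the reduction to the Proposition closes the argument.
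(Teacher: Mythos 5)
Your argument is correct and follows the paper's own route: the construction of Theorem \ref{Quadri} requires $\Box$ to decompose by terminal letters as well as initial ones, i.e. to coincide with an end weak shuffle product $\Box_E$, and the preceding Proposition says this forces $\Box$ to be the null product or the classical shuffle. The "main obstacle" you flag is not really one here, since the Corollary refers to that specific construction, whose $\wedge$ and $\vee$ are by definition given by the end-letter recursion with scalar coefficients.
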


\section{Relations on weak stuffle products}
\begin{prop}\label{relationsstuffle}
	Let $X$ be a countable alphabet, let $a$, $b$ and $c$ be three distinct letters in $X$ and $\Box$ a weak stuffle product. Then:
	\begin{enumerate}
		\item By using the maps $f_1$ and $f_2$ coming from $\Box$, we define the product $\Box^{'}$ by:
		$au\Box^{'}bv=f_1(a\ot b)a(u\Box^{'}bv)+f_2(a\ot b)b(au\Box^{'}v)$ for any letters $a$ and $b$ and any words $u$ and $v$.
		The product $\Box^{'}$ is a weak shuffle product.
		\item The function $f_3$ is associative and commutative.
 		\item If $f_3(a\ot a)\neq 0$ then $f_1(a\ot a)=f_2(a\ot a)\in\{0,1\}$.
		\item If $f_3(a\ot b)\neq 0$ then $f_1(a\ot a)=f_2(a\ot a)\in\{0,1\}$ and $f_1(b\ot b)=f_2(b\ot b)\in\{0,1\}$.
		\item If $f_3(a\ot a)\in\K^*a$ then $f_1(b\ot a)\in\{0,1\}$.
		\item If $f_3(a\ot a)\in\K^*b$ then 
			\begin{enumerate}
				\item If $f_3(a\ot b)\neq 0$ or $f_3(b\ot b)\neq 0$ or there exists $x\in X\setminus\{a,b\}$ such that $f_3(b\ot x)\neq 0$  then $f_1(a\ot a)=f_2(a\ot a)=f_1(b\ot b)=f_2(b\ot b)=f_1(a\ot b)=f_1(b\ot a)\in\{0,1\}$.
				\item If $f_3(a\ot b)=0$ and $f_3(b\ot b)=0$ then 
					\begin{enumerate}
						\item either $f_1(a\ot a)=f_2(a\ot a)=f_1(b\ot b)=f_2(b\ot b)=f_1(a\ot b)=f_1(b\ot a)\in\{0,1\}$,
						\item or $f_1(a\ot a)=f_2(a\ot a)=f_1(b\ot a)=1$, $f_1(b\ot b)+f_2(b\ot b)=1$ and $f_1(a\ot b)=0$.
					\end{enumerate}
				\item For any $x\in X\setminus\{a,b\}$ then 
					\begin{enumerate}
						\item $f_1(a\ot x)=f_1(b\ot x)$,
						\item $f_1^2(x\ot a)=f_1(x\ot b)$.
					\end{enumerate}
			\end{enumerate}
		\item If $f_3(a\ot b)\in\K^*a$ then:
			\begin{enumerate}
				\item $f_1(b\ot a)=f_1(a\ot a)f_1(a\ot b)=f_1(b\ot a)f_1(b\ot b)$.
				\item $f_1(a\ot b)=f_1(b\ot b)$.
				\item For any $x\in X\setminus\{a,b\}$ such that $f_3(b\ot x)\notin\K^* x$ then 
					\begin{enumerate}
						\item $f_1(a\ot x)=f_1(b\ot x)$,
						\item $f_1(x\ot a)\left[1-f_1(x\ot b)\right]=0$.
					\end{enumerate} 
				\item For any $x\in X\setminus\{a,b\}$ such that $f_3(b\ot x)\in\K^* x$ then 
				\begin{enumerate}
					\item $f_1(b\ot a)=f_1(x\ot a)f_1(x\ot b)$,
					\item $f_1(b\ot x)=f_1(a\ot b)f_1(a\ot x)$,
				\end{enumerate} 
			\end{enumerate}
		\item If $f_3(a\ot b)\in\K^*c$ then:
		\begin{enumerate}
			\item $f_1(c\ot c)=f_2(c\ot c)\in\{0,1\}$.
			\item $f_1(b\ot a)=f_1(c\ot a)=f_1(a\ot a)$.
			\item $f_1(a\ot b)=f_1(c\ot b)=f_1(b\ot b)$.
			\item $f_1(a\ot c)=f_1(a\ot a)f_1(b\ot b)=f_1(b\ot c)=f_1(c\ot c)$.
		\end{enumerate}
	\end{enumerate}
\end{prop}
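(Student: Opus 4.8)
The plan is to exploit the grading of $\K\langle X\rangle$ by length. Write $\Box'$ for the product of item 1, obtained by deleting the $f_3$ term, and note that $f_3$ strictly lowers length, so a straightforward induction on $\length(u)+\length(v)$ shows that the homogeneous component of $u\Box v$ of top length $\length(u)+\length(v)$ is exactly $u\Box' v$, while every other term has strictly smaller length; in particular $\Box$ never raises length. To prove item 1 I would take top-degree components of the hypotheses on $\Box$. The identity $u\Box v=v\Box u$ gives $u\Box' v=v\Box' u$ at top length. For associativity, since $u\Box v=u\Box' v+(\text{lower})$ and $u\Box' v$ is homogeneous, the top component of $(u\Box v)\Box w$ equals $(u\Box' v)\Box' w$, and likewise on the right; hence $(u\Box' v)\Box' w=u\Box'(v\Box' w)$. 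Thus $\Box'$ is associative, commutative and has vanishing contracting term, so it is a weak shuffle product. Consequently the maps $f_1,f_2$ already satisfy all the relations of Theorem \ref{carctwshuffle}, which I will use freely in the remaining items.

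Item 2 is the symmetric statement at the bottom of the grading. Extracting the length-$1$ component of $a\Box b=b\Box a$ yields $f_3(a\ot b)=f_3(b\ot a)$. For associativity one checks that a length-$1$ term in $(a\Box b)\Box c$ can only be produced by contracting twice, so the length-$2$ part of $a\Box b$ contributes nothing and the length-$1$ component equals $f_3\big(f_3(a\ot b)\ot c\big)$; comparing with $a\Box(b\Box c)$ gives $f_3\big(f_3(a\ot b)\ot c\big)=f_3\big(a\ot f_3(b\ot c)\big)$. These are precisely commutativity and associativity of $f_3$ as a product on $\K.X$.

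For items 3--8 the method is uniform: apply associativity or commutativity of $\Box$ to a well-chosen pair or triple of words of length at most three, expand both sides with the defining recursion, and compare the coefficient of a single \emph{witness} word chosen so that it can only be produced by one short sequence of shuffles and contractions. For item 3, evaluating $(a\Box a)\Box a=a\Box(a\Box a)$ and reading off the coefficient of the length-$2$ word $a\,g(a\ot a)$ yields a relation forcing $f_1(a\ot a)=f_2(a\ot a)$ as soon as $f_3(a\ot a)\neq0$; by Theorem \ref{carctwshuffle} this leaves $f_1(a\ot a)=f_2(a\ot a)\in\{0,1,\tfrac12\}$, and a length-four configuration such as $(a\Box a)\Box aa=a\Box(a\Box aa)$ then excludes the value $\tfrac12$, exactly as item (G) of the proof of Theorem \ref{carctwshuffle} excluded it in the shuffle case. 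Items 4 and 5 follow the same pattern after inserting the relevant third letter. For items 6, 7 and 8, where the contraction sends $a\ot a$ or $a\ot b$ to a letter \emph{distinct} from its inputs, the witness word must contain that output letter: matching the coefficient of a word beginning with $g(a\ot a)$ (respectively $g(a\ot b)$) in an identity such as $(a\Box a)\Box x=a\Box(a\Box x)$ equates the $f_1$-factors attached to the input letters with those attached to the output letter, which is how the linking relations $f_1(a\ot x)=f_1(b\ot x)$ and $f_1(x\ot a)^2=f_1(x\ot b)$ arise (the square reflecting that two copies of $a$ must be contracted to produce the output).

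The main obstacle will be the bookkeeping in items 6--8. There one must track simultaneously three genuinely different letters---the two inputs of $f_3$ and its output---together with an auxiliary letter $x$, and the hypotheses branch according to whether $f_3(b\ot x)$ is or is not itself a multiple of $x$. In these cases the clean heuristic ``one witness word isolates one monomial'' degrades, since several contraction paths may feed the same word, so I would organise each case by first fixing the output letter of every relevant $f_3$ value and then enumerating only the contraction paths that reach the chosen witness. The tools that keep the number of surviving paths manageable are the already-established associativity and commutativity of $f_3$ from item 2 and the full set of constraints on $f_1,f_2$ from item 1; together they let one discard most paths in advance and finish each sub-case by a finite coefficient comparison.
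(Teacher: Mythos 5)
Your overall strategy coincides with the paper's: item 1 is obtained by isolating the top-length homogeneous component (the paper phrases this as ``using words of length $\length(u)+\length(v)+2$ appearing in $au\Box bv$''), item 2 by isolating the length-one component of the commutativity and associativity identities on letters, and items 3--8 by expanding commutativity/associativity identities on short words and comparing coefficients of well-chosen words. So the architecture of your argument is the right one and matches the source.

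There is, however, one step that fails as written. For item 3 you propose to extract the coefficient of $a\,g(a\ot a)$ in $(a\Box a)\Box a=a\Box(a\Box a)$ and claim this forces $f_1(a\ot a)=f_2(a\ot a)$. It does not: writing $a\Box a=(f_1+f_2)(a\ot a)\,aa+f_3(a\ot a)$ and expanding both sides, the contribution of the contracted term cancels by commutativity and the identity reduces to $(f_1+f_2)(a\ot a)\bigl[(aa\Box a)-(a\Box aa)\bigr]=0$; comparing length-two coefficients then yields only $\bigl(f_1(a\ot a)+f_2(a\ot a)\bigr)\bigl(f_1(a\ot a)-f_2(a\ot a)\bigr)=0$, which leaves open the case $f_1(a\ot a)=-f_2(a\ot a)\neq 0$. (This is consistent with remark \ref{relH} in the proof of Theorem \ref{carctwshuffle}: $(a\Box a)\Box a=a\Box(a\Box a)$ is a formal consequence of commutativity and carries no independent information.) The identity you need is the one the paper uses, namely $a\Box aa=aa\Box a$, whose length-two component reads $k(a\ot a)f_2(a\ot a)\,a\,g(a\ot a)+\dots=k(a\ot a)f_1(a\ot a)\,a\,g(a\ot a)+\dots$ and directly gives $f_1(a\ot a)=f_2(a\ot a)$ when $f_3(a\ot a)\neq 0$; the four-letter identity $(a\Box a)\Box aa=a\Box(a\Box aa)$ then excludes the value $\tfrac12$, as you say. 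Beyond this, items 4--8 remain at the level of a method description: you do not exhibit the specific identities (the paper uses, e.g., $(a\Box b)\Box a=(a\Box a)\Box b$, $(b\Box a)\Box b=(b\Box b)\Box a$, $(a\Box b)\Box ba=a\Box(b\Box ba)$, $(a\Box b)\Box x=(a\Box x)\Box b=(b\Box x)\Box a$, and $(a\Box b)\Box aa=a\Box(b\Box aa)$) nor the coefficient comparisons that produce the stated relations, so the bulk of the proposition is planned rather than proved.
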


\begin{proof}
	\begin{enumerate}
		\item Let $a$ and $b$ be two letters and let $u$ and $v$ be two words. By using words of length $\length(u)+\length(v)+2$ appearing in $au\Box bv$ , we get the statement. In the sequel, the use of the relations given in Theorem \ref{carctwshuffle} is implied.
		\item By using words of length $1$ appearing in $x\Box y$, $x\Box y$, $(x\Box y)\Box z$ and $x\Box (y\Box z)$ for any letters $x$, $y$, $z$, we prove that the function $f_3$ is associative and commutative.
		\item We assume $f_3(a\ot a)\neq 0$. Since $a\Box aa=aa\Box a$ and $(a\Box a)\Box aa=a\Box(a\Box aa)$ then $f_1(a\ot a)=f_2(a\ot a)\in\{0,1\}$.
		\item We assume $f_3(a\ot b)\neq 0$. Since $a\Box ab=ab\Box a$, $b\Box ba=ba\Box b$, $(a\Box b)\Box a=(a\Box a)\Box b$ and $(b\Box a)\Box b=(b\Box b)\Box a$ then $f_1(a\ot a)=f_2(a\ot a)\in\{0,1\}$ and $f_1(b\ot b)=f_2(b\ot b)\in\{0,1\}$.
		\item This item is proved by using $(a\Box a)\Box b=(a\Box b)\Box a$ and $a\Box(a\Box ba)=(a\Box a)\Box ba$.
		\item We assume $f_3(a\ot a)\in\K^*b$. 
			\begin{enumerate}
				\item If  $f_3(a\ot b)\neq 0$ or $f_3(b\ot b)\neq 0$, since $f_1(a\ot a)=f_2(a\ot a)\in\{0,1\}$, $f_1(b\ot b)=f_2(b\ot b)\in\{0,1\}$, $(a\Box b)\Box a=(a\Box a)\Box b$ and $(a\Box a)\Box aa=a\Box(a\Box aa)$, then $f_1(a\ot a)=f_2(a\ot a)=f_1(b\ot b)=f_2(b\ot b)=f_1(a\ot b)=f_1(b\ot a)\in\{0,1\}$.
				\item If $f_3(a\ot b)=0$ and $f_3(b\ot b)= 0$, since $f_1(a\ot a)=f_2(a\ot a)\in\{0,1\}$, $(a\Box b)\Box a=(a\Box a)\Box b$ and $(a\Box a)\Box aa=a\Box(a\Box aa)$ then we prove the relations.
				\item This item is proved thanks to the relation $(a\Box b)\Box a=(a\Box a)\Box b$.
			\end{enumerate}
		\item We assume $f_3(a\ot b)\in\K^*a$.
			\begin{enumerate}
				\item This item is proved by using $f_1(a\ot a)=f_2(a\ot a)\in\{0,1\}$, $f_1(b\ot b)=f_2(b\ot b)\in\{0,1\}$, $(a\Box b)\Box a=(a\Box a)\Box b$ and $(b\Box a)\Box b=(b\Box b)\Box a$.
				\item By using $(b\Box b)\Box a=(b\Box a)\Box b$ and $(a\Box b)\Box ba=a\Box (b\Box ba)$ we prove $f_1(a\ot b)=f_1(b\ot b)$.
				\item Those two subitems are proven by using $(a\Box b)\Box x=(a\Box x)\Box b=(b\Box x)\Box a$.
				\item Those two subitems are proven by using $(a\Box b)\Box x=(a\Box x)\Box b=(b\Box x)\Box a$.
			\end{enumerate}
		\item We assume $f_3(a\ot b)\in\K^*c$. Then $f_1(a\ot a)=f_2(a\ot a)\in\{0,1\}$ and $f_1(b\ot b)=f_2(b\ot b)\in\{0,1\}$. By using the relations $(a\Box b)\Box c=(a\Box c)\Box b=(b\Box c)\Box a$, $(a\Box b)\Box b=(b\Box b)\Box a$, $(b\Box a)\Box a=(a\Box a)\Box b$, $(a\Box b)\Box aa=a\Box(b\Box aa)=b\Box (a\Box aa$ and $(b\Box a)\Box bb=b\Box (a\Box bb)=a\Box (b\Box bb)$ we prove all subitems.
	\end{enumerate}
\end{proof}

\Ex{}
\begin{enumerate}
	\item The $q$-shuffle product associated to the Schlesinger-Zudilin model is the weak stuffle product where $f_1(y\ot p )=f_1(y\ot y)=f_1(p\ot p)=f_2(p\ot p)=1$, $f_1(p\ot y)=f_2(y\ot y)=0$, $f_3(p\ot p)=p$, $f_3(y\ot p)=f_3(y\ot y)=0$. 
	\item The $q$-shuffle product associated to the Bradley-Zhao model is the weak stuffle product where $f_1(y\ot p )=f_1(y\ot \overline{p})=f_1(p\ot \overline{p})=f_1(\overline{p}\ot p)=f_1(p\ot p)=f_2(p\ot p)=f_1(\overline{p}\ot \overline{p})=f_2(\overline{p}\ot \overline{p})=f_1(y\ot y)=1$, $f_1(p\ot y)=f_1(\overline{p}\ot y)=f_2(y\ot y)=0$, $f_3(p\ot p)=p$, $f_3(\overline{p}\ot \overline{p})=-\overline{p}$ $f_3(y\ot p)=f_3(y\ot y)=f_3(y\ot \overline{p})=f_3(p\ot \overline{p})=0$. 
\end{enumerate}

\begin{cor}
	Let $X=\{x_1,\dots,x_n\dots\}$ be an infinite countable alphabet. We assume $\Box$ is a weak stuffle product such that $f_3(x_i\ot x_j)\in\K^*x_{i+j}$ for any positive integers $i$ and $j$. Then, the underlying weak shuffle produit is either the null shuffle product or the classical stuffle product \emph{i.e.} ($f_1\equiv 0$ and $f_2\equiv 0$) or ($f_1(a\otimes b )=1$ and  $f_2(a\otimes b)=1$ for any letters $a$ and $b$).
\end{cor}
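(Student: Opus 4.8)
The plan is to reduce the whole statement to an analysis of the diagonal coefficients $d_n := f_1(x_n\ot x_n)$, exploiting the three items of Proposition \ref{relationsstuffle} that become applicable here, the choice among them being dictated by whether two of the indices coincide. Since $f_3(x_i\ot x_j)\in\K^*x_{i+j}$ is \emph{never} zero, item 3 of Proposition \ref{relationsstuffle} applies to every pair $(x_n,x_n)$ and immediately gives $d_n=f_1(x_n\ot x_n)=f_2(x_n\ot x_n)\in\{0,1\}$ for all $n$. Thus the problem becomes: show that all the $d_n$ share one common value $\delta\in\{0,1\}$, and that every off-diagonal coefficient of $f_1$ equals this same $\delta$.

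First I would handle the generic case $i\neq j$. Then $x_i$, $x_j$ and $x_{i+j}$ are pairwise distinct (because $i,j\geq 1$ forces $i+j\neq i$ and $i+j\neq j$), so $f_3(x_i\ot x_j)\in\K^*x_{i+j}$ falls exactly under item 8 of Proposition \ref{relationsstuffle} with $a=x_i$, $b=x_j$, $c=x_{i+j}$. Item 8(d) supplies the multiplicativity relation $d_{i+j}=d_id_j$ for all $i\neq j$, and item 8(c) supplies $f_1(x_i\ot x_j)=f_1(x_j\ot x_j)=d_j$, so that every off-diagonal value of $f_1$ is already governed by a diagonal value. It then remains only to pin down the diagonal values themselves.

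The main obstacle is that the relation $d_{i+j}=d_id_j$ constrains only those indices that are sums of two \emph{distinct} positive integers, and in particular it never relates $d_1$ to $d_2$; a priori one could fear $d_1\neq d_2$. This is precisely where the doubling case is indispensable. I would apply item 6 of Proposition \ref{relationsstuffle} with $a=x_1$ and $b=x_2$, which is legitimate since $f_3(x_1\ot x_1)\in\K^*x_2$ and $f_3(x_1\ot x_2)\neq 0$, so the hypothesis of item 6(a) holds; this forces $d_1=d_2$. Writing $\delta:=d_1\in\{0,1\}$, a short induction then collapses all levels: for $n\geq 3$ one writes $n=1+(n-1)$ with $1\neq n-1$, so $d_n=d_1\,d_{n-1}=\delta\cdot\delta=\delta$ by the induction hypothesis and $\delta\in\{0,1\}$.

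Finally I would assemble the pieces. With every $d_n=\delta$, item 8(c) gives $f_1(x_i\ot x_j)=d_j=\delta$ for $i\neq j$, and combined with $f_1(x_n\ot x_n)=\delta$ this shows $f_1\equiv\delta$; relation \ref{relun} of Theorem \ref{carctwshuffle}, namely $f_2(a\ot b)=f_1(b\ot a)$, then yields $f_2\equiv\delta$. Hence either $\delta=0$, giving $f_1\equiv f_2\equiv 0$ (the null product), or $\delta=1$, giving $f_1\equiv f_2\equiv 1$ (the classical stuffle product), which is exactly the claimed dichotomy. The only genuinely delicate step is the one isolated above: item 8 alone leaves $d_1$ and $d_2$ free, and invoking the doubling relation of item 6 is what forces the two possible values to coincide.
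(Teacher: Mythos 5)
Your proof is correct and rests on the same ingredients as the paper's: item 3 of Proposition \ref{relationsstuffle} to put all diagonal values $f_1(x_n\ot x_n)=f_2(x_n\ot x_n)$ in $\{0,1\}$, item 6(a) applied to $f_3(x_1\ot x_1)\in\K^*x_2$ to force $f_1(x_1\ot x_1)=f_1(x_2\ot x_2)$ (the one identification that item 8 alone cannot provide, as you rightly isolate), and item 8 together with an induction to propagate a single value $\delta\in\{0,1\}$ to every coefficient. Your organisation --- first reducing every off-diagonal coefficient to a diagonal one via item 8(c), then showing the diagonal is constant via $d_{i+j}=d_id_j$ --- is a slightly tidier bookkeeping than the paper's double induction on $f_1(x_i\ot x_j)$, but it is essentially the same argument.
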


\begin{proof}
	We use an inductive proof. First of all, since $f_3(x_i\ot x_i)\neq 0$ for any positive integer $i$, we have $f_1(x_i\ot x_i)=f_2(x_i\ot x_i)$. Besides, $f_3(x_1\ot x_1)=x_2\neq x_1$ and $f_3(x_2\ot x_2)\neq 0$, so $f_1(x_1\ot x_1)=f_2(x_1\ot x_1)=f_1(x_2\ot x_2)=f_2(x_2\ot x_2)=f_1(x_1\ot x_2)=f_1(x_2\ot x_1)\in\{0,1\}$.

	We assume there exists $n\in\N^*$ such that $n\geq 2$ and $f_1(x_1\ot x_1)=f_1(x_1\ot x_m)$ for any $m\in\llbracket 1,n \rrbracket$. Then, $f_3(x_1\ot x_n)=x_{n+1}$ and $f_1(x_1\ot x_{n+1})=f_1(x_1\ot x_1)f_1(x_1\ot x_n)=f_1(x_1\ot x_1)$. Thus, $f_1(x_1\ot x_1)=f_1(x_1\ot x_n)$ for any positive integer $n$.
	
	We assume now there exists $k\in\N^*$ such that $f_1(x_1\ot x_1)=f_1(x_i\ot x_j)$ for any $i\in\llbracket 1, k\rrbracket$ and any positive integer $j$. For any $i\in\llbracket 1, k\rrbracket$,  we know $f_3(x_i\ot x_{k+1-i})=x_{k+1}$ so, $f_1(x_{k+1}\ot x_{i})=f_1(x_{k+1-i}\ot x_{i})=f_1(x_1\ot x_1)$. Besides, we know 
	\[f_1(x_{k+1}\ot x_{k+1})=f_2(x_{k+1}\ot x_{k+1})=f_1(x_1\ot x_{k+1})=f_1(x_1\ot x_1).\]
	Since $f_3(x_{k+1}\ot x_1)=x_{k+2}$, we have $f_1(x_{k+1}\ot x_{k+2})=f_1(x_{1}\ot x_{k+2})=f_1(x_1\ot x_1)$. We assume there exists a positive integer $j$ such that $f_1(x_{k+1}\ot x_{k+1+p})=f_1(x_1\ot x_1)$ for any $p\in\llbracket 1, j\rrbracket$. As $f_3(x_{k+1}\ot x_{j+1})=x_{k+j+2}$ then \[f_1(x_{k+1}\ot x_{k+j+2})=f_1(x_{k+1}\ot x_{k+1})f_1(x_{k+1}\ot x_{j+1})=f_1(x_1\ot x_1).\] 
	Finally, ($f_1\equiv 0$ and $f_2\equiv 0$) or ($f_1(a\otimes b )=1$ and  $f_2(a\otimes b)=1$ for any letters $a$ and $b$).
\end{proof}

By using the commutativity and the associativity of $k_3$ we have:
\begin{lemma}
	Let $X=\{a,b\}$ be an alphabet of cardinality 2 and let $\Box$ be a weak stuffle product. The map $f_3$ is one of the following:
	\begin{enumerate}
		\item\label{ita} There exists $(\lambda,\mu)\in(\K^*)^2$ such that $f_3(a\ot a)=\lambda b$, $f_3(a\ot b)=\mu a$ and $f_3(b\ot b)=\mu b$.
		\item\label{itb} There exists $(\lambda,\mu)\in(\K^*)^2$ such that $f_3(a\ot a)=\lambda a$, $f_3(a\ot b)=\mu a$ and $f_3(b\ot b)=\frac{\mu^2}{\lambda}a$.
		\item\label{itc} There exists $(\lambda,\mu)\in(\K^*)^2$ such that $f_3(a\ot a)=\lambda a$, $f_3(a\ot b)=\mu a$ and $f_3(b\ot b)=\mu b$.
		\item\label{itd} There exists $(\lambda,\mu)\in(\K^*)^2$ such that $f_3(a\ot a)=0$, $f_3(a\ot b)=\mu a$ and $f_3(b\ot b)=\lambda b$.
		\item\label{ite} There exists $(\lambda,\mu)\in(\K^*)^2$ such that $f_3(a\ot a)=\lambda a$, $f_3(a\ot b)=0$ and $f_3(b\ot b)=\mu b$.
		\item\label{itf} There exists $\lambda\in\K^*$ such that $f_3(a\ot a)=\lambda b$, $f_3(a\ot b)=0$ and $f_3(b\ot b)=0$.
		\item\label{itg} There exists $\lambda\in\K^*$ such that $f_3(a\ot a)=\lambda a$, $f_3(a\ot b)=0$ and $f_3(b\ot b)=0$.
		\item\label{ith} The map $f_3$ is the null map.
	\end{enumerate}
\end{lemma}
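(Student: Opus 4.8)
The plan is to read the map $f_3$ as a product rather than as three separate values. By item 2 of Proposition \ref{relationsstuffle}, the bilinear map $\diamond$ on the two-dimensional space $\K.X=\K a\oplus\K b$ defined by $x\diamond y=f_3(x\ot y)$ is commutative and associative, and by the very definition of a weak stuffle product each of the three quantities $f_3(a\ot a)$, $f_3(a\ot b)=f_3(b\ot a)$ and $f_3(b\ot b)$ lies in $\{0\}\cup\K^* a\cup\K^* b$, i.e. is a scalar multiple of a single letter. So the problem is exactly to list all commutative associative products on $\K a\oplus\K b$ whose three structure values are each supported on one letter. Commutativity being already incorporated, there remain three unknowns $P=f_3(a\ot a)$, $Q=f_3(a\ot b)$ and $R=f_3(b\ot b)$.

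First I would reduce associativity to a manageable set of equations. Since $\diamond$ is commutative and $X$ has only two letters, the only triples $(x,y,z)\in X^3$ that can produce a non-trivial instance of $(x\diamond y)\diamond z=x\diamond(y\diamond z)$ are, up to commutativity, those of type $aab$ and of type $abb$; the types $aaa$ and $bbb$ are automatic. Hence the whole associativity of $\diamond$ is equivalent to the two identities
\begin{align*}
f_3\big(f_3(a\ot a)\ot b\big)&=f_3\big(a\ot f_3(a\ot b)\big),\\
f_3\big(f_3(b\ot b)\ot a\big)&=f_3\big(b\ot f_3(b\ot a)\big),
\end{align*}
which, once $P,Q,R$ are written as scalar multiples of letters, become polynomial relations between the corresponding scalars.

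Next I would run a finite case analysis on the support of $Q=f_3(a\ot b)$. Relabelling the two letters ($a\leftrightarrow b$) is a symmetry of the whole setup that swaps $P$ and $R$ and exchanges the roles of $a$ and $b$ in each support; using it I may assume $Q\in\{0\}\cup\K^* a$, the case $Q\in\K^* b$ being obtained by this relabelling. Within each of these situations I would further split on the supports of $P$ and $R$ (each of $0$, $\K^* a$, $\K^* b$), substitute into the two displayed identities, and keep only the compatible combinations. For instance, when $Q=\mu a$ the second identity forces $R\diamond a=\mu^2 a\neq0$, so $R=0$ is impossible and the value of $R$ is tied to those of $P$ and $\mu$; this is what produces the constrained families such as $f_3(b\ot b)=\tfrac{\mu^2}{\lambda}a$ in item \ref{itb}. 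Collecting the surviving triples, and recognising the mirror images obtained from the swap $a\leftrightarrow b$, yields the eight families \ref{ita}--\ref{ith}.

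The substitution itself is routine; the delicate point is the bookkeeping. The main obstacle is to organise the support cases so that (i) no admissible product is lost, (ii) the two associativity equations are genuinely solved rather than merely verified on the listed forms, and (iii) the reduction by the $a\leftrightarrow b$ symmetry is handled cleanly, so that the eight displayed families are exactly a complete set of representatives. Since every step is a linear computation in the three scalars, there is no analytic difficulty: the work lies entirely in the exhaustive but elementary enumeration.
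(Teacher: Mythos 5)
Your proposal is correct and follows essentially the same route as the paper, which likewise derives the list purely from the commutativity and associativity of $f_3$ (Proposition \ref{relationsstuffle}) via a direct case analysis; your reduction of associativity to the two basis identities of types $aab$ and $abb$ and the use of the $a\leftrightarrow b$ symmetry is just a cleaner organisation of the same "direct quite long calculations". (Incidentally, carrying out your own scheme shows that in item \ref{itd} associativity actually forces $\lambda=\mu$, so that family as printed is slightly larger than necessary, but this does not affect the truth of the lemma.)
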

By using Proposition \ref{relationsstuffle} we have:
\begin{prop}
	Let $X=\{a,b\}$ be an alphabet of cardinality 2 and let $\Box$ be a weak stuffle product. In the previous lemma, if $f_3$ satisfies
	\begin{enumerate}
		\item Item \ref{ita} or item \ref{itb}, then there are two cases:
		\begin{itemize}
			\item $f_1(a\otimes b )=1$ and  $f_2(a\otimes b)=1$ for any  $(a,b)\in X^2$,
			\item $f_1\equiv0$ and $f_2\equiv0$.
		\end{itemize}
		\item Item \ref{itc} or item \ref{itd}, then there are four cases:
		\begin{itemize}
			\item $f_1(a\otimes b )=1$ and  $f_2(a\otimes b)=1$ for any  $(a,b)\in X^2$,
			\item $f_1\equiv0$ and $f_2\equiv0$,
			\item $f_1(b\ot a)=f_1(a\ot b)=f_1(b\ot b)=f_2(b\ot b)=0$ and $f_1(a\ot a)=f_2(a\ot a)=1$,
			\item $f_1(a\ot b)=f_1(b\ot b)=f_2(b\ot b)=1$ and $f_1(b\ot a)=f_1(a\ot a)=f_2(a\ot a)=0$.
		\end{itemize}
		\item Item \ref{ite}, then we have:
		\begin{itemize}
			\item $f_1(a\ot b)\in\{0,1\}$,
			\item $f_1(b\ot a)\in\{0,1\}$,
			\item $f_1(a\ot a)=f_2(a\ot a)\in\{0,1\}$,
			\item $f_1(b\ot b)=f_2(b\ot b)\in\{0,1\}$.
		\end{itemize}
		\item Item \ref{itf}, then there are three cases:
		\begin{itemize}
			\item $f_1(a\otimes b )=1$ and  $f_2(a\otimes b)=1$ for any  $(a,b)\in X^2$,
			\item $f_1\equiv0$ and $f_2\equiv0$,
			\item $f_1(a\ot a)=f_2(a\ot a)=f_1(b\ot a)=1$, $f_1(a\ot b)=0$ and $f_1(b\ot b)+f_2(b\ot b)=1$
		\end{itemize}
		\item Item \ref{itg}, then we have:
		\begin{itemize}
			\item $f_1(b\ot a)\in\{0,1\}$,
			\item $f_1(a\ot a)=f_2(a\ot a)\in\{0,1\}$.
		\end{itemize}
		\item Item \ref{ith}, then we give the answer in Theorem \ref{carctwshuffle}.
	\end{enumerate}
\end{prop}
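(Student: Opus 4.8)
The plan is to treat the eight shapes of $f_3$ from the previous lemma one at a time, in each case reading off from Proposition \ref{relationsstuffle} precisely the constraints that its hypotheses trigger. The only inputs are the items of Proposition \ref{relationsstuffle}, used both as stated and in their $a\leftrightarrow b$ symmetric form (legitimate because $\Box$ is commutative and the labelling of the two letters is arbitrary), together with relation \ref{relun} of Theorem \ref{carctwshuffle}, namely $f_1(a\ot b)=f_2(b\ot a)$. Item \ref{ith} ($f_3\equiv 0$) needs no work: then $\Box$ is itself a weak shuffle product and Theorem \ref{carctwshuffle} is the answer.

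First I would dispose of the collapsing entries \ref{ita}, \ref{itb} and \ref{itf}, all governed by item 6 of Proposition \ref{relationsstuffle}. In \ref{ita} one has $f_3(a\ot a)=\lambda b\in\K^*b$ together with $f_3(a\ot b)=\mu a\neq 0$, so the hypothesis of item 6(a) holds and forces $f_1(a\ot a)=f_2(a\ot a)=f_1(b\ot b)=f_2(b\ot b)=f_1(a\ot b)=f_1(b\ot a)\in\{0,1\}$; combining this with $f_1(a\ot b)=f_2(b\ot a)$ collapses everything to a single value in $\{0,1\}$, which is exactly the two announced cases. Entry \ref{itb} is the same phenomenon seen through the swapped statement: here $f_3(b\ot b)\in\K^*a$ and $f_3(a\ot a)\neq 0$, so the symmetric form of item 6(a) again yields the full collapse. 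Entry \ref{itf} is the borderline of item 6: now $f_3(a\ot a)\in\K^*b$ but $f_3(a\ot b)=f_3(b\ot b)=0$ and there is no third letter, so we land in item 6(b), whose stated dichotomy is precisely the three announced cases.

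Next I would handle the four-case entries \ref{itc} and \ref{itd} together. Writing $\alpha=f_1(a\ot a)$, $\beta=f_1(b\ot b)$, $p=f_1(a\ot b)$, $q=f_1(b\ot a)$, the hypothesis $f_3(a\ot b)=\mu a\neq 0$ lets me invoke item 4 to get $f_2(a\ot a)=\alpha$, $f_2(b\ot b)=\beta$ and $\alpha,\beta\in\{0,1\}$, while item 7(a),(b) (available since $f_3(a\ot b)\in\K^*a$) give $p=\beta$ and $q=\alpha p=\alpha\beta$. Hence $p$ and $q$ are determined by $(\alpha,\beta)$, the remaining equality inside item 7(a) being redundant, and the four choices of $(\alpha,\beta)\in\{0,1\}^2$ produce exactly the four listed possibilities. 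The two entries differ only in the origin of $\alpha,\beta\in\{0,1\}$: in \ref{itc} this may also be read off item 3, whereas in \ref{itd}, where $f_3(a\ot a)=0$, it comes solely from item 4.

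Finally, the partial entries \ref{ite} and \ref{itg} are pure membership statements, because the off-diagonal value $f_3(a\ot b)$ vanishes and item 7 is therefore unavailable, so no relation links the two letters. For \ref{ite}, where $f_3(a\ot a)\in\K^*a$ and $f_3(b\ot b)\in\K^*b$, items 3 and 5 and their swaps give the four stated memberships and nothing more is forced; for \ref{itg} only item 3 and item 5 apply, giving the two stated memberships. The main point demanding care throughout is bookkeeping: at each entry one must check exactly which components of $f_3$ are zero, nonzero, or proportional to $a$ versus $b$, since this dictates which sub-item of items 6 and 7 is legitimate and which relabelling of Proposition \ref{relationsstuffle} to apply. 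The only genuinely computational step is the little Boolean system in \ref{itc}/\ref{itd}, where one must confirm that its four solutions are complete and coincide with the stated list.
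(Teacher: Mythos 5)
Your proof is correct and follows exactly the route the paper intends: the proposition is stated immediately after the sentence ``By using Proposition \ref{relationsstuffle} we have,'' and your case-by-case application of items 3--7 of that proposition (together with their $a\leftrightarrow b$ relabellings and the relation $f_1(a\ot b)=f_2(b\ot a)$ from Theorem \ref{carctwshuffle}) is precisely the omitted verification. The bookkeeping in each of the eight entries, including the four-element Boolean system for items \ref{itc} and \ref{itd}, checks out.
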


\begin{lemma}
	Let $X=\{a,b,c\}$ be an alphabet of cardinality 3 and let $\Box$ be a weak stuffle product. The map $f_3$ is one of the following:
	\begin{enumerate}
		\item \label{un} There exists $(\lambda,\gamma,\mu)\in(\K^*)^3$ such that $\gamma\mu=\lambda^2$, $f_3(a\ot b)=\lambda c$, $f_3(a\ot c)=\lambda a$, $f_3(b\ot c)=\lambda b$, $f_3(a\ot a)=\gamma b$, $f_3(b\ot b)=\mu a$ and $f_3(c\ot c)=\lambda c$.
		
		\item \label{deux} There exists $(\lambda,\gamma,\mu)\in(\K^*)^3$ such that $f_3(a\ot b)=\lambda c$, $f_3(a\ot c)=\gamma c$, $f_3(b\ot c)=\mu c$, $f_3(a\ot a)=\gamma a$, $f_3(b\ot b)=\frac{\lambda\mu}{\gamma}a$ and $f_3(c\ot c)=\frac{\gamma\mu}{\lambda} c$.

		\item There exists $(\lambda,\gamma,\mu)\in(\K^*)^3$ such that $f_3(a\ot b)=\lambda c$, $f_3(a\ot c)=\gamma c$, $f_3(b\ot c)=\mu c$, $f_3(a\ot a)=\gamma a$, $f_3(b\ot b)=\mu b$ and $f_3(c\ot c)=\frac{\gamma\mu}{\lambda} c$.
		
		\item There exists $(\lambda,\gamma,\mu)\in(\K^*)^3$ such that $f_3(a\ot b)=\lambda c$, $f_3(a\ot c)=\gamma c$, $f_3(b\ot c)=\mu c$, $f_3(a\ot a)=\gamma a$, $f_3(b\ot b)=\frac{\lambda\mu}{\gamma}c$ and $f_3(c\ot c)=\frac{\gamma\mu}{\lambda} c$.
		
		\item \label{cinq} There exists $(\lambda,\gamma,\mu)\in(\K^*)^3$ such that $f_3(a\ot b)=\lambda c$, $f_3(a\ot c)=\gamma c$, $f_3(b\ot c)=\mu c$, $f_3(a\ot a)=\frac{\gamma^2}{\mu} b$, $f_3(b\ot b)=\frac{\lambda\mu}{\gamma}c$ and $f_3(c\ot c)=\frac{\gamma\mu}{\lambda} c$.
		
		\item \label{six} There exists $(\lambda,\gamma,\mu)\in(\K^*)^3$ such that $f_3(a\ot b)=\lambda c$, $f_3(a\ot c)=\gamma c$, $f_3(b\ot c)=\mu c$, $f_3(a\ot a)=\frac{\lambda\gamma}{\mu} c$, $f_3(b\ot b)=\frac{\mu^2}{\gamma}a$ and $f_3(c\ot c)=\frac{\gamma\mu}{\lambda} c$.
		
		\item There exists $(\lambda,\gamma,\mu)\in(\K^*)^3$ such that $f_3(a\ot b)=\lambda c$, $f_3(a\ot c)=\gamma c$, $f_3(b\ot c)=\mu c$, $f_3(a\ot a)=\frac{\lambda\gamma}{\mu} c$, $f_3(b\ot b)=\mu b$ and $f_3(c\ot c)=\frac{\gamma\mu}{\lambda} c$.
		
		\item \label{huit} There exists $(\lambda,\gamma,\mu)\in(\K^*)^3$ such that $f_3(a\ot b)=\lambda c$, $f_3(a\ot c)=\gamma c$, $f_3(b\ot c)=\mu c$, $f_3(a\ot a)=\frac{\lambda\gamma}{\mu} c$, $f_3(b\ot b)=\frac{\lambda\mu}{\gamma}c$ and $f_3(c\ot c)=\frac{\gamma\mu}{\lambda} c$.
				
		\item There exists $(\lambda,\gamma)\in(\K^*)^2$ such that $f_3(a\ot b)=\lambda c$, $f_3(a\ot c)=\gamma c$, $f_3(b\ot c)=0$, $f_3(a\ot a)=\gamma a$, $f_3(b\ot b)=0$ and $f_3(c\ot c)=0$.
		
		\item There exists $\lambda\in\K^*$ such that $f_3(a\ot b)=\lambda b$, $f_3(a\ot c)=\lambda c$, $f_3(b\ot c)=0$, $f_3(a\ot a)=\lambda a$, $f_3(b\ot b)=0$ and $f_3(c\ot c)=0$.
		
		\item There exists $(\lambda,\gamma)\in(\K^*)^2$ such that $f_3(a\ot b)=\lambda b$, $f_3(a\ot c)=\lambda c$, $f_3(b\ot c)=0$, $f_3(a\ot a)=\lambda a$, $f_3(b\ot b)=0$ and $f_3(c\ot c)=\gamma b$.
		
		\item There exists $(\lambda,\gamma)\in(\K^*)^2$ such that $f_3(a\ot b)=\lambda b$, $f_3(a\ot c)=\lambda c$, $f_3(b\ot c)=0$, $f_3(a\ot a)=\lambda a$, $f_3(b\ot b)=0$ and $f_3(c\ot c)=\gamma c$.
			
		\item There exists $(\lambda,\gamma,\mu)\in(\K^*)^3$ such that $f_3(a\ot b)=\lambda b$, $f_3(a\ot c)=\lambda c$, $f_3(b\ot c)=0$, $f_3(a\ot a)=\lambda a$, $f_3(b\ot b)=\gamma b$ and $f_3(c\ot c)=\mu c$.
		
		\item \label{quatorze} There exists $(\lambda,\gamma,\mu)\in(\K^*)^3$ such that $f_3(a\ot b)=\lambda c$, $f_3(a\ot c)=0$, $f_3(b\ot c)=0$, $f_3(a\ot a)=\gamma c$, $f_3(b\ot b)=\mu c$ and $f_3(c\ot c)=0$.
		
		\item \label{quinze} There exists $(\lambda,\gamma)\in(\K^*)^2$ such that $f_3(a\ot b)=\lambda c$, $f_3(a\ot c)=0$, $f_3(b\ot c)=0$, $f_3(a\ot a)=\gamma b$, $f_3(b\ot b)=0$ and $f_3(c\ot c)=0$.
		
		\item There exists $\lambda\in\K^*$ such that $f_3(a\ot b)=\lambda c$, $f_3(a\ot c)=0$, $f_3(b\ot c)=0$, $f_3(a\ot a)=0$, $f_3(b\ot b)=0$ and $f_3(c\ot c)=0$.
		
		\item There exists $(\lambda,\gamma,\mu,\tau)\in(\K^*)^4$ such that $\gamma\mu=\lambda^2$, $f_3(a\ot b)=\lambda a$, $f_3(a\ot c)=0$, $f_3(b\ot c)=0$, $f_3(a\ot a)=\gamma a$, $f_3(b\ot b)=\mu a$ and $f_3(c\ot c)=\tau c$.
		
		\item There exists $(\lambda,\gamma,\tau)\in(\K^*)^3$ such that $f_3(a\ot b)=\lambda a$, $f_3(a\ot c)=0$, $f_3(b\ot c)=0$, $f_3(a\ot a)=\gamma a$, $f_3(b\ot b)=\lambda b$ and $f_3(c\ot c)=\tau c$.
		
		\item There exists $(\lambda,\gamma,\tau)\in(\K^*)^3$ such that $f_3(a\ot b)=\lambda a$, $f_3(a\ot c)=0$, $f_3(b\ot c)=0$, $f_3(a\ot a)=\gamma b$, $f_3(b\ot b)=\lambda b$ and $f_3(c\ot c)=\tau c$.
		
		\item There exists $(\lambda,\gamma)\in(\K^*)^2$ such that $f_3(a\ot b)=\lambda a$, $f_3(a\ot c)=0$, $f_3(b\ot c)=0$, $f_3(a\ot a)=\gamma c$, $f_3(b\ot b)=\lambda b$ and $f_3(c\ot c)=0$.
		
		\item There exists $(\lambda,\tau)\in(\K^*)^2$ such that $f_3(a\ot b)=\lambda a$, $f_3(a\ot c)=0$, $f_3(b\ot c)=0$, $f_3(a\ot a)=0$, $f_3(b\ot b)=\lambda b$ and $f_3(c\ot c)=\tau c$.
		
		\item There exists $(\lambda,\gamma,\mu\in(\K^*)^3$ such that $\gamma\mu=\lambda^2$, $f_3(a\ot b)=\lambda a$, $f_3(a\ot c)=0$, $f_3(b\ot c)=0$, $f_3(a\ot a)=\gamma a$, $f_3(b\ot b)=\mu a$ and $f_3(c\ot c)=0$.
		
		\item There exists $(\lambda,\gamma,\tau)\in(\K^*)^3$ such that $f_3(a\ot b)=\lambda a$, $f_3(a\ot c)=0$, $f_3(b\ot c)=0$, $f_3(a\ot a)=\gamma a$, $f_3(b\ot b)=\lambda b$ and $f_3(c\ot c)=0$.

		\item There exists $(\lambda,\gamma)\in(\K^*)^2$ such that $f_3(a\ot b)=\lambda a$, $f_3(a\ot c)=0$, $f_3(b\ot c)=0$, $f_3(a\ot a)=\gamma b$, $f_3(b\ot b)=\lambda b$ and $f_3(c\ot c)=0$.
		
		\item There exists $\lambda\in\K^*$ such that $f_3(a\ot b)=\lambda a$, $f_3(a\ot c)=0$, $f_3(b\ot c)=0$, $f_3(a\ot a)=0$, $f_3(b\ot b)=\lambda b$ and $f_3(c\ot c)=0$.
		
		\item There exists $(\lambda,\gamma,\mu)\in(\K^*)^3$ such that $f_3(a\ot b)=0$, $f_3(a\ot c)=0$, $f_3(b\ot c)=0$, $f_3(a\ot a)=\lambda a$, $f_3(b\ot b)=\gamma b$ and $f_3(c\ot c)=\mu c$.
		
		\item There exists $(\lambda,\gamma)\in(\K^*)^2$ such that $f_3(a\ot b)=0$, $f_3(a\ot c)=0$, $f_3(b\ot c)=0$, $f_3(a\ot a)=\lambda c$, $f_3(b\ot b)=\gamma c$ and $f_3(c\ot c)=0$.
		
		\item There exists $(\lambda,\gamma)\in(\K^*)^2$ such that $f_3(a\ot b)=0$, $f_3(a\ot c)=0$, $f_3(b\ot c)=0$, $f_3(a\ot a)=\lambda c$, $f_3(b\ot b)=\gamma b$ and $f_3(c\ot c)=0$.
		
		\item \label{deuxlettresun} There exists $(\lambda,\gamma)\in(\K^*)^2$ such that $f_3(a\ot b)=0$, $f_3(a\ot c)=0$, $f_3(b\ot c)=0$, $f_3(a\ot a)=\lambda a$, $f_3(b\ot b)=\gamma b$ and $f_3(c\ot c)=0$.
		
		\item \label{deuxlettresdeux}  There exists $\lambda\in\K^*$ such that $f_3(a\ot b)=0$, $f_3(a\ot c)=0$, $f_3(b\ot c)=0$, $f_3(a\ot a)=\lambda b$, $f_3(b\ot b)=0$ and $f_3(c\ot c)=0$.
		
		\item\label{deuxlettrestrois} There exists $\lambda\in\K^*$ such that $f_3(a\ot b)=0$, $f_3(a\ot c)=0$, $f_3(b\ot c)=0$, $f_3(a\ot a)=\lambda a$, $f_3(b\ot b)=0$ and $f_3(c\ot c)=0$.
			
		\item The map $f_3$ is the null map.
	\end{enumerate}
\end{lemma}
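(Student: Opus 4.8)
The plan is to treat $f_3$ as a genuine algebraic structure and classify it by an organised case analysis. By Proposition \ref{relationsstuffle}(2) we already know that $f_3$ is commutative and associative, and by the very definition of a weak stuffle product $f_3=kg$ sends each pair of letters to a scalar multiple of a \emph{single} letter. Hence $(\K a\oplus \K b\oplus \K c, f_3)$ is a commutative, associative (non-unital) $\K$-algebra whose structure constants have the special ``monomial'' shape $f_3(x\otimes y)\in\K\cdot z$ for a letter $z$ depending on $(x,y)$. Because of commutativity, $f_3$ is entirely determined by the six values $f_3(a\otimes a)$, $f_3(b\otimes b)$, $f_3(c\otimes c)$, $f_3(a\otimes b)$, $f_3(a\otimes c)$, $f_3(b\otimes c)$, so the task reduces to enumerating all choices of these six monomials that are compatible with associativity.

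First I would record the associativity constraints explicitly. For any triple $(x,y,z)$ of letters one has
\[
f_3\bigl(f_3(x\otimes y)\otimes z\bigr)=f_3\bigl(x\otimes f_3(y\otimes z)\bigr),
\]
and since each side is again a scalar times a letter, every triple yields either a relation forcing two target letters to coincide or a multiplicative relation among the scalars; these are exactly the sources of conditions such as $\gamma\mu=\lambda^2$ appearing in the statement. Using commutativity it suffices to impose this on the diagonal triples $(x,x,x)$, the mixed triples $(x,x,y)$ with $x\neq y$, and the single triple $(a,b,c)$ of pairwise distinct letters; I would list the resulting finite system of scalar/letter equations once and for all.

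The bulk of the proof is then a finite case analysis, which I would organise by the vanishing pattern of the three off-diagonal products $f_3(a\otimes b)$, $f_3(a\otimes c)$, $f_3(b\otimes c)$ (that is, by how many of them are nonzero: three, two, one, or none), and, within each pattern, by which letter each nonzero product targets. For each branch the associativity equations collapse the free scalars onto the few parameters $\lambda,\gamma,\mu,\tau$ and pin down the remaining target letters, the diagonal values $f_3(x\otimes x)$ being constrained in the same way. Running through the branches produces exactly the listed families, with the imposed relations appearing precisely where a triple forces a scalar identity.

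The main obstacle is purely combinatorial: guaranteeing exhaustiveness and non-redundancy across the large number of branches. The delicate points are the fully generic case, where all six products are nonzero and associativity among $a,b,c$ interacts with the three diagonals to generate the tightest scalar relations, and the verification that each listed family is not merely necessary but \emph{genuinely} associative, which I would check by substituting the parametrised structure constants back into the equations of the second step. Commutativity holds automatically throughout, so no extra work is needed there.
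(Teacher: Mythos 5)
Your proposal takes essentially the same route as the paper, whose entire proof is the observation that $f_3$ is commutative and associative (Proposition \ref{relationsstuffle}) followed by a ``direct quite long'' enumeration of the commutative associative products on $\K a\oplus\K b\oplus\K c$ whose structure constants are monomial; your organisation of that enumeration by the vanishing pattern of the off-diagonal values, with a final back-substitution to confirm each listed family is genuinely associative, is a sound way to carry it out. The one point to correct in your reduction is that for pairwise distinct letters commutativity leaves \emph{three} independent associativity constraints, one for each choice of middle letter, namely $f_3\bigl(f_3(a\ot b)\ot c\bigr)=f_3\bigl(f_3(a\ot c)\ot b\bigr)=f_3\bigl(f_3(b\ot c)\ot a\bigr)$, not the single triple $(a,b,c)$ you mention.
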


\begin{proof}
We use the fact that the map $f_3$ is associative and commutative, and then, we get the lemma by direct quite long calculations.
\end{proof}

\begin{prop}
		Let $X=\{a,b,c\}$ be an alphabet of cardinality 3 and let $\Box$ be a weak stuffle product. In the previous lemma, if $f_3$ satisfies one of the items \ref{un}, \ref{deux}, \ref{cinq}, \ref{six}, \ref{huit}, \ref{quatorze}, \ref{quinze} then either ($f_1\equiv 0$ and $f_2\equiv 0$) or ($f_1(a\otimes b )=1$ and  $f_2(a\otimes b)=1$ for $(a,b)\in X^2$).
\end{prop}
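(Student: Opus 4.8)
The plan is to push everything through Proposition~\ref{relationsstuffle}, showing that in each listed case the pair $(f_1,f_2)$ is completely determined by the two scalars $s:=f_1(a\ot a)$ and $t:=f_1(b\ot b)$, and then to force $s=t$. The starting remark is that every one of the items 1, 2, 5, 6, 8, 14, 15 has $f_3(a\ot b)=\lambda c\in\K^* c$ with $c$ the third letter, so the eighth item of Proposition~\ref{relationsstuffle} applies and gives
\[
f_1(b\ot a)=f_1(c\ot a)=f_1(a\ot a)=:s,\quad f_1(a\ot b)=f_1(c\ot b)=f_1(b\ot b)=:t,\quad f_1(a\ot c)=f_1(b\ot c)=f_1(c\ot c)=st,
\]
together with $f_1(c\ot c)=f_2(c\ot c)\in\{0,1\}$; the fourth item (applied to $f_3(a\ot b)\neq0$) adds $s=f_2(a\ot a)\in\{0,1\}$ and $t=f_2(b\ot b)\in\{0,1\}$. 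Using $f_1(x\ot y)=f_2(y\ot x)$ from Theorem~\ref{carctwshuffle}, the remaining values of $f_2$ are then read off from $s$ and $t$. Hence the whole product depends only on $(s,t)\in\{0,1\}^2$, and substituting back one checks at once that $(s,t)=(0,0)$ yields $f_1\equiv0$, $f_2\equiv0$, while $(s,t)=(1,1)$ yields $f_1(x\ot y)=f_2(x\ot y)=1$ for all $(x,y)\in X^2$. So the entire proposition reduces to the single claim $s=t$.

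For items 1, 2, 5, 6 and 15 I would argue that this is immediate. In each of these, one of the diagonals $f_3(a\ot a)$ or $f_3(b\ot b)$ lies in $\K^*$ times the \emph{other} letter of $\{a,b\}$ (namely $f_3(a\ot a)\in\K^* b$ in items 1, 5, 15 and $f_3(b\ot b)\in\K^* a$ in items 2, 6), while $f_3(a\ot b)=\lambda c\neq0$. The triggering hypothesis of the first branch of the sixth item of Proposition~\ref{relationsstuffle} is therefore satisfied (after exchanging $a$ and $b$ when it is the $b$-diagonal that is involved), and that branch forces $f_1(a\ot a)=f_1(b\ot b)=f_1(a\ot b)=f_1(b\ot a)$, i.e.\ $s=t$.

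For items 8 and 14 the diagonals $f_3(a\ot a)$ and $f_3(b\ot b)$ instead lie in $\K^* c$, and here I would invoke the sixth item of Proposition~\ref{relationsstuffle} relabelled to the pairs $\{a,c\}$ and $\{b,c\}$. In item 8 the mixed values $f_3(a\ot c)=\gamma c$ and $f_3(b\ot c)=\mu c$ are nonzero, so this application lands in its first branch and yields $f_1(a\ot a)=f_1(c\ot c)$ and $f_1(b\ot b)=f_1(c\ot c)$, that is $s=st=t$, whence $s=t$. Item 14 is the delicate case and the expected main obstacle: there $f_3(a\ot c)=f_3(b\ot c)=f_3(c\ot c)=0$, so the triggering condition of the first branch fails and one is thrown into the second branch, which offers a degenerate alternative with $f_1(c\ot c)+f_2(c\ot c)=1$. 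The trick is that this alternative is incompatible with $f_1(c\ot c)=f_2(c\ot c)\in\{0,1\}$ supplied by the eighth item (it would force $f_1(c\ot c)=\tfrac12$); ruling it out leaves the non-degenerate alternative, which again gives $f_1(a\ot a)=f_1(c\ot c)$ and $f_1(b\ot b)=f_1(c\ot c)$, i.e.\ $s=st=t$.

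In every case we obtain $s=t$, and by the reduction of the first paragraph this produces exactly the announced dichotomy, either $f_1\equiv0$ and $f_2\equiv0$ or $f_1(x\ot y)=f_2(x\ot y)=1$ for all $(x,y)\in X^2$. The only genuinely non-mechanical point is the exclusion of the degenerate branch in item 14 via the constraint $f_1(c\ot c)=f_2(c\ot c)\in\{0,1\}$; the rest is a careful but routine bookkeeping of which relation of Proposition~\ref{relationsstuffle} is triggered by the nonvanishing of each $f_3$-value.
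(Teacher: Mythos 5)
The paper states this proposition without proof, so there is nothing to compare against except the evident intention (as with the two-letter analogue) that it follow from Proposition \ref{relationsstuffle}. Your argument does exactly that and is correct: item 8 of that proposition reduces everything to the two parameters $s=f_1(a\ot a)$ and $t=f_1(b\ot b)$ with $f_1(c\ot c)=f_2(c\ot c)=st\in\{0,1\}$, item 6 (suitably relabelled) forces $s=t$ in each of the seven cases, and your exclusion of the degenerate branch of item 6(b) in case \ref{quatorze} via $f_1(c\ot c)+f_2(c\ot c)=1$ versus $f_1(c\ot c)=f_2(c\ot c)\in\{0,1\}$ is sound.
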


\section{Weak stuffle product and Hopf algebras}
If $\Box$ is the classical shuffle product or the classical stuffle product then the algebra  $(\K\langle X\rangle,\Box)$ can be equipped with a compatible coalgebra structure, thanks to the deconcatenation coproduct, which makes it into a Hopf algebra. Are there other weak stuffle products compatible with the deconcatenation? We begin by recalling the Hopf algebra construction for stuffle algebras given in \cite{Hoffman2000,Hoffman2017,Hoffman2018}. We then turn to the case of weak stuffle algebras.

\begin{theo}\label{theoHopfalgebra}
	Let $X$ be a countable alphabet, let $\K\langle X\rangle$ be the vector space generated by words on the alphabet $X$. We assume there exists at least one product $\diamond$ on $\K.X$ which is commutative and associative. We define the product $\star$ and the coproduct of deconcatenation $\Delta$ by:
	\[au\star bv=a(u\star bv)+b(au\star v)+(a\diamond b)(u\star v)\]
	and \[\Delta(w)=\sum_{\substack{(u,v)\in (\K\langle X\rangle)^2,\\ uv=w}}u\ot v\]
	for any letters $a$ and $b$ and any words $u$, $v$ and $w$.
	
	Then $(\K\langle X\rangle,\star,\Delta)$ is a Hopf algebra.
\end{theo}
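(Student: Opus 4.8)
The plan is to show that $(\K\langle X\rangle,\star,\De)$ is a bialgebra which, being connected and filtered by word length, automatically carries an antipode; hence it is a Hopf algebra. Concretely I would verify in turn: (i) $\star$ makes $\K\langle X\rangle$ into a unital commutative associative algebra with unit $1$; (ii) $(\K\langle X\rangle,\De,\epsilon)$ is a counital coalgebra, with $\epsilon$ the projection onto the empty word; (iii) $\De$ and $\epsilon$ are algebra morphisms for $\star$; and (iv) the length filtration is connected, which yields the antipode.

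For (i), commutativity and associativity are proved by induction on the total length of the arguments. Writing $x=au$ and $y=bv$, the defining recursion is symmetric in $(x,y)$ because $\diamond$ is commutative, so $x\star y=y\star x$ follows at once by induction. For associativity, with $x=au$, $y=bv$, $z=cw$ I would expand $(x\star y)\star z$ and $x\star(y\star z)$ via the recursion and match the resulting families of terms; the summands carrying a contraction are reconciled using the associativity and commutativity of $\diamond$, in particular $(a\diamond b)\diamond c=a\diamond(b\diamond c)$. This is Hoffman's classical quasi-shuffle computation. Step (ii) is standard: $\De$ is coassociative since $(\De\ot\mathrm{id})\De(w)=(\mathrm{id}\ot\De)\De(w)=\sum_{u_1u_2u_3=w}u_1\ot u_2\ot u_3$, and it is counital for $\epsilon(w)=\delta_{w,1}$.

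Step (iii) is the heart of the proof and its main obstacle. Equipping $\K\langle X\rangle\ot\K\langle X\rangle$ with the product $(p\ot q)\star(r\ot s)=(p\star r)\ot(q\star s)$, I would prove $\De(x\star y)=\De(x)\star\De(y)$ by induction on $\length(x)+\length(y)$. The decisive tool is the recursive identity for deconcatenation
\[\De(aw)=1\ot aw+(a\ot 1)\De(w),\]
valid because any prefix of $aw$ is either empty or begins with $a$. Applying $\De$ to the three summands of $au\star bv$ through this identity, the three ``$1\ot(-)$'' contributions reassemble into $1\ot(x\star y)$, while the induction hypothesis rewrites the remaining parts. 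On the other side I would expand $\De(au)\star\De(bv)$, using $\De(au)=1\ot au+\sum au'\ot u''$ and $\De(bv)=1\ot bv+\sum bv'\ot v''$, into four families of terms. Two of them match immediately, and the crucial point is that the three ``diagonal'' contributions collapse, precisely through the recursion defining $\star$, into $\sum(au'\star bv')\ot(u''\star v'')$: here the contraction term $(a\diamond b)(u'\star v')$ supplies exactly the third summand needed to reconstitute $au'\star bv'$. Keeping track of this contraction is the delicate bookkeeping. Compatibility of $\epsilon$ is immediate, since $\star$ of two words lies in the augmentation ideal unless both words are empty.

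Finally, for (iv), the filtration $F_n=\Span\{w\in X^*:\length(w)\le n\}$ satisfies $\star\colon F_m\ot F_n\to F_{m+n}$ (the contraction can only decrease length) and $\De(F_n)\subseteq\sum_{i+j=n}F_i\ot F_j$, with $F_0=\K.1$. A connected filtered bialgebra admits an antipode, defined by $S(1)=1$ and, for a nonempty word $w$ with reduced coproduct $\tilde{\De}(w)=\sum w'\ot w''$ (both factors nonempty), by $S(w)=-w-\sum S(w')\star w''$; since $\length(w')$ and $\length(w'')$ are strictly smaller than $\length(w)$, this recursion terminates. Therefore $(\K\langle X\rangle,\star,\De)$ is a Hopf algebra.
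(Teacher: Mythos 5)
Your proof is correct and follows exactly the route the paper attributes to Hoffman (\cite{Hoffman2000,Hoffman2017,Hoffman2018}): induction on word length for the algebra and bialgebra axioms, then the connected length filtration to produce the antipode; the paper itself only cites these references, whereas you supply the details. In particular your key computation --- that the three ``diagonal'' terms $a(u'\star bv')+b(au'\star v')+(a\diamond b)(u'\star v')$ recombine via the defining recursion into $au'\star bv'$ when checking $\Delta(x\star y)=\Delta(x)\star\Delta(y)$ --- is precisely the decisive step of the quasi-shuffle argument, and your bookkeeping there is accurate.
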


\begin{proof}
	This theorem is proven in \cite{Hoffman2000,Hoffman2017,Hoffman2018} by induction and using the filtration given by the length of words.
\end{proof}

\begin{theo}
	Let $X$ be a countable alphabet of cardinality $n\in\N\cup\{+\infty\}$ and let $\Box$ be a weak stuffle product on $\K\langle X\rangle$. We denote by $\Delta$ the deconcatenation coproduct. If $\Delta$ respects $\Box$ (\emph{i.e.} if $\Delta$ is an algebra morphism) then the underlying weak shuffle product is the classical shuffle product. 
\end{theo}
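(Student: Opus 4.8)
The plan is to unwind the hypothesis that $\Delta$ is an algebra morphism, namely $\Delta(u\Box v)=\Delta(u)\Box\Delta(v)$ for all words $u,v$, where the product on $\K\langle X\rangle\ot\K\langle X\rangle$ is $(x_1\ot x_2)\Box(y_1\ot y_2)=(x_1\Box y_1)\ot(x_2\Box y_2)$. Since $\Delta$ is graded by the length of words and this tensor product is bigraded, I would test the identity on single letters and compare its homogeneous components. The key observation is that the bidegree $(1,1)$ part (the terms of the form letter $\ot$ letter) already encodes the values of $f_1$ and $f_2$, whereas the contracting term $f_3$, being of length $1$, only contributes in bidegrees $(1,0)$ and $(0,1)$ and therefore cannot interfere.

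First I would treat two distinct letters $a\neq b$. Expanding $\Delta(a)\Box\Delta(b)=(1\ot a+a\ot 1)\Box(1\ot b+b\ot 1)$, the only contributions in bidegree $(1,1)$ come from $(a\ot 1)\Box(1\ot b)=a\ot b$ and $(1\ot a)\Box(b\ot 1)=b\ot a$. On the other side, the bidegree $(1,1)$ part of $\Delta(a\Box b)$ is the ``middle cut'' of the length-$2$ words occurring in $a\Box b=f_1(a\ot b)\,ab+f_2(a\ot b)\,ba+f_3(a\ot b)$, that is $f_1(a\ot b)\,(a\ot b)+f_2(a\ot b)\,(b\ot a)$. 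Comparing the coefficients of $a\ot b$ and of $b\ot a$ yields $f_1(a\ot b)=1$ and $f_2(a\ot b)=1$ for every pair of distinct letters.

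It then remains to handle the diagonal values $f_1(a\ot a)$ and $f_2(a\ot a)$. Running the same computation with $u=v=a$, the bidegree $(1,1)$ part of $\Delta(a)\Box\Delta(a)$ equals $2\,(a\ot a)$, while that of $\Delta(a\Box a)$ equals $\big(f_1(a\ot a)+f_2(a\ot a)\big)(a\ot a)$; hence $f_1(a\ot a)+f_2(a\ot a)=2$. This sum condition alone does not separate the two values, so here I would invoke the structure theory: by item $1$ of Proposition \ref{relationsstuffle} the underlying product $\Box'$ (obtained from the same $f_1,f_2$ with $f_3\equiv 0$) is a genuine weak shuffle product, so its diagonal entries obey item \ref{reltrois} of Theorem \ref{carctwshuffle}. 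Either $f_1(a\ot a)=f_2(a\ot a)\in\{0,1\}$, whose sum is $0$ or $2$, or else $f_1(a\ot a)=\alpha$ and $f_2(a\ot a)=1-\alpha$, whose sum is $1$. The constraint $f_1(a\ot a)+f_2(a\ot a)=2$ selects the first alternative with the common value $1$, giving $f_1(a\ot a)=f_2(a\ot a)=1$. When $|X|\geq 2$ one may alternatively rule out the second alternative directly, since it forces $f_1(b\ot a)=0$ for every $b\neq a$, contradicting the off-diagonal value $f_1(b\ot a)=1$ established above.

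Combining the two cases gives $f_1\equiv 1$ and $f_2\equiv 1$ on all of $\K.X\ot\K.X$, which is exactly the defining data of the classical shuffle product; thus the underlying weak shuffle product $\Box'$ is the classical shuffle product. The only real subtlety, and the main obstacle, is that the coproduct compatibility by itself pins down $f_1$ and $f_2$ completely only off the diagonal; on the diagonal it yields the single linear relation $f_1(a\ot a)+f_2(a\ot a)=2$, and one must supplement it with the characterisation of weak shuffle products to conclude. Everything else is a routine extraction of homogeneous components, with the $f_3$ terms harmlessly confined to the outer bidegrees.
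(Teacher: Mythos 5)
Your proposal is correct and follows essentially the same route as the paper: apply $\Delta(u\Box v)=\Delta(u)\Box\Delta(v)$ to pairs of letters and compare the letter-$\ot$-letter components, which forces $f_1(a\ot b)=f_2(a\ot b)=1$ off the diagonal and $f_1(a\ot a)+f_2(a\ot a)=2$ on it. The only difference is that you make explicit the final step the paper leaves implicit, namely that the sum condition $f_1(a\ot a)+f_2(a\ot a)=2$ must be combined with the dichotomy from Theorem \ref{carctwshuffle} (or with the off-diagonal values) to conclude $f_1(a\ot a)=f_2(a\ot a)=1$.
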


\begin{proof}
	Let $\Box$ be a weak stuffle product. We assume the deconcatenation respects $\Box$. Then, for any distinct letters $a$ and $b$:
	\begin{align*}
	\Delta(a\Box a)=&\left(f_1(a\ot a)+f_2(a\ot a)\right)\Delta(aa)+\Delta(f_3(a\ot a))\\
	=&\left(f_1(a\ot a)+f_2(a\ot a)\right)\Delta(aa)+k(a\ot a)\Delta(g(a\ot a))\\
	=&\left(f_1(a\ot a)+f_2(a\ot a)\right)(aa\ot 1 + a\ot a+1\ot aa)\\
	&+k(a\ot a)\left(g(a\ot a)\ot 1+1\ot g(a\ot a)\right)\\
	=&\Delta(a)\Box\Delta(a)\\
	=&\left(f_1(a\ot a)+f_2(a\ot a)\right)(aa\ot 1+1\ot aa)+2a\ot a\\
	&+k(a\ot a)\left(g(a\ot a)\ot 1+1\ot g(a\ot a)\right),\\
	\Delta(a\Box b)=&f_1(a\ot b)\Delta(ab)+f_1(b\ot a)\Delta(ba)+ k(a\ot b)\Delta(g(a\ot b))\\
	=&f_1(a\ot b)(ab\ot 1+a\ot b +1\ot ab)+f_1(b\ot a)(ba\ot 1+b\ot a +1\ot ba)\\
	&+k(a\ot b)\left(g(a\ot b)\ot 1+1\ot g(a\ot b)\right)\\
	=&\Delta(a)\Box\Delta(b)=f_1(a\ot b)(ab\ot 1+1\ot ab)+f_1(b\ot a)(ba\ot 1+1\ot ba)+a\ot b+b\ot a\\
	&+k(a\ot b)\left(g(a\ot b)\ot 1+1\ot g(a\ot b)\right).
	\end{align*}
	So, $f_1(a\ot a)=f_2(a\ot a)=f_1(a\ot b)=f_1(b\ot a)=1$.
	
	The reversal is a particular case of Theorem \ref{theoHopfalgebra}.
\end{proof}

\section{Computation programs} \label{computation}
We give computation programs realised to compute the weak shuffle of two words or to prove Lemma \ref{lemmaprogramme}. In the sequel we assume the alphabet $X$ is the set of integers $\{1,\dots, c\}$ and a word is a list $[i_1,\dots,i_n]$.

We first present a function which computes the weak shuffle product of two words. This function, called {\ttfamily weak\_shuffle\_product}, takes as entries a list  Rules which coresponds to the values taken by $f_1$ and $f_2$ and two lists  w1 and w2 which represent the two words to use for computations. We assume 
\begin{align*}
\text{\ttfamily Rules}=&\Bigg[f_1(1\ot 2),\dots,f_1(1\ot c),\dots f_1(c\ot 1),\dots, f_1(c\ot c-1),\\
&f_1(1\ot 1), f_2(1\ot 1),\dots, f_1(c\ot c),f_2(c\ot c)\Bigg].
\end{align*} 
As exit, the function return a list. Each element of the result is a list of two elements A and B: A is the number of times the word represented by B appears in the weak shuffle product of w1 and  w2.
               
\begin{lstlisting}[language=MuPAD]
weak_shuffle_product(Rules,w1,w2):=block([n1,n2,u1,u2,temp,res,i,j,
                                v1a,v1b,v2a,v2b,P1,P2,g,d,L,r,s,c],
  /*-------- Initialisation of the values of the left side and 
                                          the right side --------*/    
  g:0,
  d:0,

  /*------ Computation of the cardinality of the alphabet. ------*/    
  r:length(Rules),
  s:sort(solve(c*(c+1)=r)),
  c:subst(s[2],c),

  /*------ Message if the variable Rules does not correspond 
                                           to an alphabet. ------*/    
  if (notequal(c,floor(c)) or c<1) then print("erreur"),

  /*------ Computation of the length of words w1 and w2. ------*/
  n1:length(w1),
  n2:length(w2),
  
  /*-------- We use the commutativity of the  weak shuffle product 
         to avoid some sub-cases. The word with the smallest length 
                                         is on the left. --------*/
  if n1<=n2 then (
  	u1:[[1],w1],
	u2:[[1],w2]
  )
  else (  u1:[[1],w2],
	u2:[[1],w1],
	temp:n1,
	n1:n2,
	n2:temp
	),    
  
  res:[[0],[]],

  /*-------- We will use a recursive call. --------*/    
  if equal(n1,0) then (
  	/*---- Limit case: w1 is the empty word and 
  	                                        w2 is any word. ----*/
	res:[[[1],u2[2]]]
  )    
  else (  
    /*---- We compute the weak shuffle product thanks to the rela-
     tion: au(wsp)bv=f1(a\ot b)a(u(wsp)vb)+ f2(a\ot b)b(ua(wsp)v)
           here u and v are words and a and b are letters. ----*/
	v1a:create_list(u1[2][i],i,2,n1),
	v1b:u1[2][1],
	v2a:create_list(u2[2][i],i,2,n2),
	v2b:u2[2][1],
	P1:[],
	P2:[],
	
	/*--- We detemine f_1(v1b\ot v2b) and f_2(v1b\ot v2b). ---*/
    if equal(v1b,v2b) then (
	  g:Rules[r+2*(-c+v1b)-1], 
	  d:Rules[r+2*(-c+v1b)]
	),
	if (v1b<v2b) then (
	  g:Rules[(v1b-1)*(c-1)+v2b-1],
	  d:Rules[(v2b-1)*(c-1)+v1b]
	),
	if (v1b>v2b) then (
	  g:Rules[(v1b-1)*(c-1)+v2b],
	  d:Rules[(v2b-1)*(c-1)+v1b-1]
	),

	/*-------- Recursive call. --------*/
	if g>0 then (
	  P1:weak_shuffle_product(Rules,v1a,u2[2]),
	  P1:create_list([g*P1[i][1],append([v1b],P1[i][2])],
	                                              i,1,length(P1))
	),
	if d>0 then (
	  P2:weak_shuffle_product(Rules,u1[2],v2a),    
	  P2:create_list([d*P2[i][1],append([v2b],P2[i][2])],
	                                              i,1,length(P2))
	),
	res:append(P1,P2)       
  ),
	
  /*------ We rewrite the result for having only one occurence of 
                                   each distinct words. --------*/
  L:create_list(res[i][2],i,1,length(res)),
  L:unique(L),
  res:create_list([ratsimp(sum(if equal(L[i],res[j][2]) then res[j][1] 
                      else 0, j,1, length(res))),L[i]],i,1,length(L)),

  return(res)
);
\end{lstlisting}

In the sequel, the functions aim at proving if the following statement is true or not for some low $n$.
Let $n$ be a positive integer and let $w_1$, $w_2$ and $w$ be three non-empty words of length $n$ such that $w_1\leq w_2\leq w$ and $w_1<w$. Then $\max(w_1\underset{9}{\Box} w_2)<\max(w\underset{9}{\Box} w)$? It is trivial for $n=1$. For $n=2$, it comes from computations doing in the proof of \ref{Cneuf}. Thus, those cases are not treated.

The function {\ttfamily words} aims at building all words of length $n$ with an alphabet of cardinality $c$. It takes as entries the integers $n$ and $c$ and returns a list where each element is a list coresponding to a word. In the result, words are ordered by the ascending order.

\begin{lstlisting}[language=MuPaD]
words(n,c):=block([res,i,j,U],
  res:[],
  if n=1 then res:create_list([i],i,1,c),
  if n>1 then (
    U:words(n-1,c),
    res:create_list(append(U[i],[j]),j,1,c,i,1,length(U))
  ),
  return(sort(res))
);
\end{lstlisting}
 
The function {\ttfamily spectrum\_product} aims at determining words appearing in the weak shuffle product of two words w1 and w2. It takes as entries a list Rules which gives the rules of computation for the weak shuffle product, an integer  $r$ which is the length of the list  Rules, an integer $c$ which is the cardinality of the alphabet, and two lists w1 and  w2 which represent the two words to use for computations.

As exit, the function return a list ordered thanks to the ascending order where each element is a list representing a word appearing in the weak shuffle product of two words  w1 and  w2.

\begin{lstlisting}[language=MuPAD]
spectrum_product(Rules,r,c,w1,w2):=block([n1,n2,u1,u2,temp,res,i,j,
                                        v1a,v1b,v2a,v2b,P1,P2,g,d],
  /*-------- Initialisation of the values of 
                        the left side and the right side --------*/    
  g:0,
  d:0,
  /*------- Computation of the length of words w1 and w2. -------*/
  n1:length(w1),
  n2:length(w2),
 /*-------- We use the commutativity of the  weak shuffle product 
         to avoid some sub-cases. The word with the smallest length 
                                         is on the left. --------*/ 
  if n1<=n2 then (
    u1:w1,
    u2:w2
  )
  else (  u1:w2,
    u2:w1,
    temp:n1,
    n1:n2,
    n2:temp
  ),    
  res:[],

  /*-------- We will use a recursive call. --------*/    
  if equal(n1,0) then (
    /*--- Limit case: w1 is the empty word and w2 is any word. ---*/
    res:[u2]
  )    
  else (  
    /*---- We compute the weak shuffle product thanks to the rela-
    tion: au(wsp)bv=f1(a\ot b)a(u(wsp)vb)+ f2(a\ot b)b(ua(wsp)v)
    here u and v are words and a and b are letters. ----*/
    v1a:deleten(u1,1),
    v1b:u1[1],
    v2a:deleten(u2,1),
    v2b:u2[1],
    P1:[],
    P2:[],
    
    /*----- We detemine f_1(v1b\ot v2b) and f_2(v1b\ot v2b). -----*/
    if equal(v1b,v2b) then (
      g:Rules[r+2*(-c+v1b)-1], 
      d:Rules[r+2*(-c+v1b)]
    ),
	
    if (v1b<v2b) then (
      g:Rules[(v1b-1)*(c-1)+v2b-1],
      d:Rules[(v2b-1).(c-1)+v1b]
    ),
    if (v1b>v2b) then (
      g:Rules[(v1b-1)*(c-1)+v2b],
      d:Rules[(v2b-1).(c-1)+v1b-1]
    ),
	
    /*-------- Recursive call. --------*/
    if g>0 then (
      P1:spectrum_product(Rules,r,c,v1a,u2),
      P1:create_list(append([v1b],P1[i]),i,1,length(P1))
    ),
    if d>0 then (
      P2:spectrum_product(Rules,r,c,u1,v2a),
      P2:create_list(append([v2b],P2[i]),i,1,length(P2))
    ),
	
    res:append(P1,P2)
  ),

  /*------ Words are written once with the ascending order. ------*/
  res:sort(unique(res)),
  return(res)
);
\end{lstlisting}

The function {\ttfamily maximum\_product}  takes as entries a list Rules corresponding to the weak shuffle product, an integer $r$ which is the length of Rules, an integer  $c$ which is the cardinality of the alphabet, an integer $n$ which is the length of words used, a list $W$ which represents the list of words of length $n$, an integer $l$ which is the length of $W$, an integer $k$ which is the level of computation. The function returns a list of length $k-5$. The first one is a list of only one element which is $\max(W[6]\underset{9}{\Box}W[6])$. In the result, the element $p$ with $2\leq p\leq k-5$ is a list of two elements $A_p$ and $B_p$ where $A_p=\max(\max(w_1\underset{9}{\Box}w_2))$ with $w_1<W[p]$
and $w_2\leq W[k]$ and $B_p=\max(W[p]\underset{9}{\Box}W[p])$. This function really depends on the weak shuffle product $\underset{9}{\Box}$. 
\begin{lstlisting}[language=MuPaD]
maximum_product(Rules,r,c,n,W,l,k):=block([res,i,P,init],
  res:[],
  if n>1 then (
    /*-------------- W[1]=[1,...,1], W[2]=[1,...,1,2], 
               W[3]=[1,...,1,2,1], W[4]=[1,...,1,2,2], 
           W[5]=[1,...,1,2,1,1], W[6]=[1,...,1,2,1,2], 
             it is enouth to do an initialisation with 
                                W[6]. --------------*/
    if k=6 then (
      init:last(spectrum_product(Rules,r,c,W[6],W[6])), 
      res:[[init]]
    ),
    if (k>6 and k<l+1) then (
      /*----- Recursive call. ---------*/
      res:maximum_product(Rules,r,c,n,W,l,k-1),
      
      /*---- Maximum word in res. ----*/
      P:[last(sort(res[length(res)]))],
      
      /*--- P is filled in maximum words in W[i](wsp)W[k]
      for i:1 thru k-1 do (
        P:append(P,[last(spectrum_product(Rules,r,c,W[i],W[k]))])
      ),
      
      /*--- res is filled in a list of two elements: 
         the maximum in P and the maximum in W[K](spw)W[k]. ---*/ 
      res:append(res,[[last(sort(P)),
                   last(spectrum_product(Rules,r,c,W[k],W[k]))]])
    )
  ),
  return(res)
);
\end{lstlisting}

The function {\ttfamily proof\_statement} determines if the statement given at the beginning of the section is proved for words of length $n$. As entries, it takes a list Rules corresponding to the weak shuffle product and an integer coresponding to the length of words used. It returns a boolean. The boolean is true if the statement if satisfied and false if the statement is not satisfied. Since this function uses {\ttfamily maximum\_product}, it depends on the weak shuffle product $\underset{9}{\Box}$.
\begin{lstlisting}[language=MuPaD]
proof_statement(Rules,n):=block([res,P,U,i,p,c,r,s,W,l],
  /*------- Computation of the cardinality of the alphabet. -------*/    
  r:length(Rules),
  s:sort(solve(c*(c+1)=r)),
  c:subst(s[2],c),
  
  /*-------- Message if the variable Rules
                        does not correspond to an alphabet. --------*/    
  if (notequal(c,floor(c)) or c<1) then print("erreur")
  else(
    /*-------- Computations. --------*/ 
    res:true,
    /*------ Building of words of length n. ------*/
    W:words(n,c),
    l:length(W),
    /*------ Building max(w(wsp)w) and max(max(w_1(wsp)w_2) 
                                       with w_1<w and w_2<=w. ------*/
    P:maximum_product(Rules,r,c,n,W,l,l),
    p:length(P),
    i:2,
    /*------ Checking of the statement at level i. ------*/
    while ( equal(res,true) and i<p+1) do (
      if equal(P[i][1],P[i][2]) then ( res:false),
      i:i+1
    )
  ),
  return(res)
);
\end{lstlisting}

\bibliographystyle{siam}
\bibliography{biblio_stuffle_mammez}
\end{document}